\numberwithin{equation}{section}
\newtheorem{theorem}{Theorem}[section]
\newtheorem{lemma}[theorem]{Lemma}
\newtheorem{claim}{Claim}[theorem]
\newtheorem{corollary}[theorem]{Corollary}
\newtheorem{construction}[theorem]{Construction}
\newtheorem{question}{Question}
\newcommand{\mad}{\mathrm{mad}}
\newcommand{\FII}{{$\mathcal{F}\mathcal{I}\mathcal{I}$-partition}}
\newcommand{\FIIe}{$\mathcal{F}\mathcal{I}\mathcal{I}$-\it{partition}}
\newcommand{\fii}[3]{${#1}\sqcup{#2}\sqcup{#3}$}
\newcommand{\fc}[1]{\mu^*({#1})}
\newcommand{\rc}[1]{{\bf {[C\ref{#1}]}}}
\newcommand{\rcp}[1]{{\bf {[C${}^{\prime}$\ref{#1}]}}}
\newcommand{\ra}[1]{{\bf {[P\ref{#1}]}}}
\title{\bf  On Star 5-Colorings of Sparse Graphs}
\author{Ilkyoo Choi\footnote{Ilkyoo Choi was supported by the Basic Science Research Program through the National Research Foundation of Korea (NRF) funded by the Ministry of Education (NRF-2018R1D1A1B07043049), and also by the Hankuk University of Foreign Studies Research Fund.}\\[1.5ex]
\small Department of Mathematics\\
\small Hankuk University of Foreign Studies  \\
\small Yongin, Republic of Korea \\
\small\tt ilkyoo@hufs.ac.kr \\
\and
Boram Park\footnote{Boram Park work  supported by Basic Science Research Program through the National Research Foundation of Korea (NRF) funded by the Ministry of Science, ICT and Future Planning (NRF-2018R1C1B6003577).} \\[1.5ex]
\small Department of Mathematics\\
\small Ajou University\\
\small Suwon, Republic of Korea \\
\small\tt borampark@ajou.ac.kr}
\begin{document}

\maketitle
\date

\begin{abstract} \normalsize
A \textit{star $k$-coloring} of a graph $G$ is a proper (vertex) $k$-coloring of $G$ such that the vertices on a path of length three receive at least three colors.
Given a graph $G$, its \textit{star chromatic number}, denoted $\chi_s(G)$, is the minimum integer $k$ for which $G$ admits a star $k$-coloring.
Studying star coloring of sparse graphs is an active area of research, especially in terms of the maximum average degree of a graph; the \textit{maximum average degree}, denoted $\mad(G)$, of a graph $G$ is $\max\left\{ \frac{2|E(H)|}{|V(H)|}:{H \subset G}\right\}$.
It is known that for a graph $G$,
if $\mad(G)<\frac{8}{3}$, then $\chi_s(G)\leq 6$~\cite{2010KuTi}, and
if $\mad(G)< \frac{18}{7}$ and its girth is at least 6, then $\chi_s(G)\le 5$~\cite{2009BuCrMoRaWa}.
We improve both results by showing that for a graph $G$, if $\mad(G)\le \frac{8}{3}$, then $\chi_s(G)\le 5$.
As an immediate corollary, we obtain that a planar graph with girth at least 8 has a star 5-coloring, improving the best known girth condition for a planar graph to have a star 5-coloring~\cite{2010KuTi,2008Ti}.
\end{abstract}

\section{Introduction}

All graphs in this paper are simple.
Given a graph $G$, a {\it proper $k$-coloring} of $G$ is a partition of its vertex set $V(G)$ into $k$ parts such that there is no edge with both endpoints in the same part.
In other words, each color class induces an empty graph.

As a generalization of proper coloring, Gr\"unbaum~\cite{1973Gr} introduced the notion of  \textit{acyclic coloring}, which is a proper coloring satisfying the additional constraint that the vertices on a cycle (of any length) receive at least three colors.
In other words, the union of two color classes induces an acyclic graph.
One of the most interesting results regarding acyclic coloring is the result by Borodin~\cite{1979Bo}, which states that every planar graph admits an acyclic coloring with five colors.
This resolved a conjecture in the initial paper~\cite{1973Gr} of Gr\"unbaum where he showed that five colors is necessary to acyclically color certain planar graphs; Kostochka and Mel\cprime nikov~\cite{1976KoMe} even constructed a bipartite planar graph requiring five colors when acyclically colored.
In contrast, the famous Four Color Theorem~\cite{1977ApHa,1977ApHaKo} states that every planar graph has a proper 4-coloring.

In~\cite{1973Gr}, Gr\"unbaum also raised the question of proper coloring with the additional constraint that the vertices on a path of length three receive at least three colors.
In other words, the union of two color classes induces a star forest.
Although Gr\"unbaum gave no specific name for this type of coloring, this coloring is now known as {\it star coloring}, ever since the term was first coined by Albertson et al.~\cite{2004AlChKiKuRa}.
To be precise, a {\it star $k$-coloring} of a graph $G$ is a proper $k$-coloring of $G$ where the vertices on a path of length three receive at least three colors.
The {\it star chromatic number} of a graph $G$, denoted $\chi_s(G)$, is the minimum $k$ for which $G$ admits a star $k$-coloring.
Since a star forest is also an acyclic graph, star coloring is a strengthening of acyclic coloring.
Acyclic coloring and star coloring have been an active area of research, and we direct the readers to a thorough survey by Borodin~\cite{2013Bo} for the rich literature.
There is also an edge-coloring analogue for star coloring; for recent progress on star edge-coloring subcubic graphs, see~\cite{2013DvMoSa,2018KeRa,lei2017star,luvzar2017star}.

In this paper, we are interested in star colorings of sparse graphs, where sparsity is measured in terms of the maximum average degree.
The {\em maximum average degree} of a graph $G$, denoted $\mad(G)$, is the maximum of the average degrees of all its subgraphs, that is, ${\displaystyle \mad(G) = \max\left\{ {\frac{2|E(H)|}{|V(H)|}}:H\subset G\right\}}$.
Since a planar graph $G$ with girth at least $g$ satisfies $\mad(G)<\frac{2g}{g-2}$, a result regarding graphs with bounded maximum average degree implies that planar graphs with certain girth conditions can reach the same conclusion,  see \cite{DanDoug}.

Gr\"unbaum proved that planar graphs are star 2304-colorable in~\cite{1973Gr} back in 1973, and after 45 years the best result so far is by Albertson et al.~\cite{2004AlChKiKuRa} where they showed that all planar graphs are star 20-colorable.
They also constructed a planar graph that requires at least ten colors to be star colored, and for a given girth $g$, they constructed a planar graph with girth $g$ that requires at least four colors to be star colored.
Moreover, they investigated the star chromatic number for planar graphs with certain girth constraints, where they proved that a planar graph $G$  with girth at least 5 and 7 satisfies $\chi_s(G)\leq 16$ and $\chi_s(G)\leq 9$, respectively, improving upon some bounds in \cite{2004FeRaRe}.
Timmons~\cite{2008Ti} and K\"undgen and Timmons~\cite{2010KuTi} continued the study as they obtained results that imply a planar graph is star $4$-, $5$-, $6$-, $7$-, $8$-colorable if its girth is at least 14, 9, 8, 7, 6, respectively.
Sufficient conditions on girth to guarantee that a planar graph is star $4$-colorable have received much attention due to its relation to the Four Color Theorem~\cite{1977ApHa,1977ApHaKo}.
In particular, Bu et al.~\cite{2009BuCrMoRaWa} improved the girth constraint to 13, and Brandt et al.~\cite{2016BrFeKuLoStYa} has the current best bound showing that a planar graph with girth at least 10 has a star 4-coloring.

Lower bounds on the girth constraints have also been investigated.
In particular, a planar graph with girth 7 and 5 that requires 5 and 6 colors to be star colored has been constructed in~\cite{2008Ti} and~\cite{2010KuTi}, respectively.
See Table~\ref{table} for a summary of lower and upper bounds on the star chromatic number of a planar graph with a given girth constraint.

\begin{table}[ht]
\centering	
\begin{tabular}{c||c|c|c|c|c|c|c|c|c|c}
\mbox{girth} & 3& 4& 5& 6& 7& 8& 9& $\geq 10$\\
\hline
\hline
\mbox{upper bound} & 20~\cite{2004AlChKiKuRa}& 20~\cite{2004AlChKiKuRa} & 16~\cite{2004AlChKiKuRa}& 8~\cite{2010KuTi} & 7~\cite{2010KuTi}& 6~\cite{2010KuTi}& 5~\cite{2008Ti} & 4~\cite{2016BrFeKuLoStYa} \\
\mbox{lower bound}
&           	 10~\cite{2004AlChKiKuRa} 			
&			 10~\cite{2004AlChKiKuRa} 			
& 			6~\cite{2010KuTi}					
& 			5~\cite{2008Ti}						
&			 5~\cite{2008Ti}						
& 			4~\cite{2004AlChKiKuRa} 			
& 			4~\cite{2004AlChKiKuRa}				
& 		4~\cite{2004AlChKiKuRa}				
\end{tabular}
\caption{Table of best known results.}
\label{table}
\end{table}

Various results above are also true for the maximum average degree setting~\cite{2016BrFeKuLoStYa,2010KuTi,2008Ti} and the list version setting~\cite{2009BuCrMoRaWa,2011ChRaWa,2014ChRaWa,2010KuTi}.
Researchers have also investigated star coloring for bipartite planar graphs~\cite{2009KiKuTi} and subcubic graphs~\cite{2013ChRaWa,2014ChRaWa}.
In particular, we explicitly state the following two theorems:

\begin{theorem}[\cite{2009BuCrMoRaWa}]\label{thm1.1}
For a graph $G$,
if $\mad(G)< \frac{18}{7}$ and its girth is at least 6, then $G$ is star 5-colorable.
\end{theorem}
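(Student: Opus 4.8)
I would run the standard \emph{minimal counterexample plus discharging} argument. Suppose the theorem fails and let $G$ be a counterexample with $|V(G)|+|E(G)|$ minimum. Both the bound on $\mad$ and the girth condition pass to subgraphs, so every graph obtained from $G$ by deleting a vertex or an edge is star $5$-colorable; in particular $G$ is connected and every proper subgraph of $G$ is star $5$-colorable. The one combinatorial fact driving the whole proof is that a \emph{proper} $5$-coloring $\varphi$ fails to be a star coloring precisely when some path $a\text{-}b\text{-}c\text{-}d$ of length three uses only two colors, which for a proper coloring is equivalent to $\varphi(a)=\varphi(c)$ and $\varphi(b)=\varphi(d)$. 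Hence, to (re)color a vertex $v$ consistently one must avoid the colors of the neighbors of $v$ together with the color $\varphi(w)$ of every vertex $w$ at distance two from $v$, along some path $v\text{-}u\text{-}w$, for which $w$ already has a neighbor colored $\varphi(u)$; the girth-$6$ hypothesis is invoked over and over to ensure that the vertices entering such a count are genuinely distinct, since there are no short cycles to collapse second neighborhoods.

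The first half of the proof is a list of \emph{reducible configurations} --- small subgraphs that cannot occur in $G$ --- all concerning vertices of degree $1$ and $2$. I expect these to say, roughly: a $1$-vertex is deletable unless its neighbor has large degree; no vertex has two $1$-neighbors; a path of consecutive $2$-vertices (a $2$-thread) cannot be long before its two end-vertices of degree at least $3$ can no longer pay for it in the discharging; and there are quantitative bounds on how many $2$-vertices or short threads can be attached to a single vertex of degree $3$ or $4$. Each such claim is proved by the same template: delete the internal $2$-vertices of the configuration, invoke minimality to star $5$-color the remaining graph, and then re-insert the deleted vertices one at a time, typically sweeping inward from both ends of a thread, verifying at each step that the set of forbidden colors (neighbor colors plus the distance-two ``bichromatic-$P_4$'' colors above) has size at most $4$. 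The delicate case is when the two end-vertices of a thread receive a common color, where the count becomes tight and one must either re-choose that color (backtracking) or locally re-color an end-vertex. I expect this reducibility analysis, rather than the discharging, to be the main obstacle: the $P_4$-constraint is not local the way properness is, so even extending a star coloring across a short thread requires bookkeeping two steps away and occasional backtracking, and the list of configurations has to be chosen so that it dovetails exactly with the discharging rules.

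For the second half, the discharging phase, I would assign each vertex $v$ the initial charge $\mu(v)=d(v)-\tfrac{18}{7}$, so $\sum_{v\in V(G)}\mu(v)=2|E(G)|-\tfrac{18}{7}|V(G)|<0$ since $\mad(G)<\tfrac{18}{7}$. Vertices of degree at least $3$ start with nonnegative charge ($\tfrac37$, $\tfrac{10}{7}$, $\tfrac{17}{7},\dots$), while a $2$-vertex has $-\tfrac47$ and a $1$-vertex $-\tfrac{11}{7}$. The rules should move charge from high-degree vertices to the deficient ones: each vertex of degree at least $3$ pays off its $1$-neighbors outright and sends a fixed amount (of order $\tfrac27$) to each $2$-vertex nearest to it along a thread, with a larger transfer for short threads, all tuned so that the reducible-configuration restrictions force every $1$- and $2$-vertex up to charge at least $0$ while no vertex of degree at least $3$ drops below $0$. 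Verifying the last point is a finite case analysis organized by vertex degree and by the number and lengths of incident threads, using exactly the excluded configurations. Since every vertex then has nonnegative final charge, $\sum_v\mu(v)\ge 0$, contradicting $\sum_v\mu(v)<0$; so no counterexample exists.
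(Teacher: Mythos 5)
Note first that the paper does not prove Theorem~\ref{thm1.1}: it cites the result from Bu, Cranston, Montassier, Raspaud, and Wang (2009) and then goes on to strengthen it. The technique the paper actually develops, in proving the stronger Theorem~\ref{thm:main:star-coloring} via Theorem~\ref{thm:main:8/3} (which implies Theorem~\ref{thm1.1} since $\tfrac{18}{7}<\tfrac{8}{3}$ and the girth hypothesis is dropped), is a structural decomposition rather than a direct coloring extension: one shows that $V(G)$ admits an {\FII}\ \fii{F}{I_\alpha}{I_\beta} with $F$ inducing a forest and each $I_\gamma$ a $2$-independent set, and such a partition yields a star $5$-coloring for free (three colors on the forest by depth mod~$3$, one color on each $I_\gamma$; $2$-independence rules out any bichromatic $P_4$). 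Your proposal instead manipulates a star $5$-coloring directly, deleting $2$-threads and reinserting vertices while tracking both neighbor colors and the distance-two constraints coming from the bichromatic-$P_4$ characterization. Both are discharging arguments of the usual shape, and your charge assignment $\mu(v)=d(v)-\tfrac{18}{7}$ and thread-based rules are the natural choice for this density; but the reducibility half, which you yourself identify as the real obstacle, is exactly where the two approaches diverge. The partition formulation largely sidesteps the backtracking you anticipate: membership in $F$, $I_\alpha$, or $I_\beta$ is a structural condition that can be tested and locally adjusted much more robustly than a star coloring can be extended, because the $P_4$ constraint on colorings is not locally verifiable whereas $2$-independence and acyclicity of $G[F]$ are. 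Your sketch is a plausible outline of the kind of argument in the cited source, but it is not the route the present paper takes, and for the reducible-configuration analysis the decomposition route is genuinely easier to control.
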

\begin{theorem}[\cite{2010KuTi}]\label{thm1.2}
For a graph $G$,
if $\mad(G)< \frac{8}{3}$, then $G$ is star 6-colorable.
\end{theorem}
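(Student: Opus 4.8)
The plan is to argue by minimal counterexample and discharging. Suppose the statement fails, and let $G$ be a counterexample minimizing $|V(G)|+|E(G)|$: thus $\mad(G)<\frac{8}{3}$, $G$ has no star $6$-coloring, yet every graph with fewer vertices or edges and maximum average degree less than $\frac83$ does. We may assume $G$ is connected. The argument splits into a reducibility phase (configurations $G$ cannot contain) and a discharging phase (any graph avoiding those configurations has $\mad\ge\frac83$).

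For reducibility the template is standard: fix a configuration, delete from $G$ a vertex or a short path to obtain $G'$, star $6$-color $G'$ by minimality (using $\mad(G')\le\mad(G)<\frac83$), and extend the coloring one vertex at a time. When a vertex $v$ is (re)colored, the forbidden colors are those of its already-colored neighbors together with the colors that would close a bicolored path of length three through $v$; the latter are counted by cases according to whether $v$ is an endpoint or an interior vertex of such a path, and in each case the count is bounded in terms of $d(v)$ and the degrees of the neighbors of $v$. Concretely I would prove: $G$ has minimum degree $2$; a maximal path of degree-$2$ vertices joining two branch vertices (degree $\ge 3$) is short; and a $3$-vertex cannot be incident to too many degree-$2$ vertices whose far branch endpoints are also of small degree. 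For some of these one cannot merely delete a single vertex and extend — the naive count fails near $3$-vertices — so I expect to delete a whole near-$2$-valent substructure, or to uncolor and recolor a boundary vertex, to create the needed room.

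For the discharging phase, give each $v\in V(G)$ the initial charge $\mu(v)=3d(v)-8$. Since $\mad(G)<\frac83$ forces $6|E(G)|<8|V(G)|$, we get $\sum_{v}\mu(v)=6|E(G)|-8|V(G)|<0$. Now push charge from branch vertices (each with surplus $3d(v)-8\ge 1$) along the degree-$2$ threads to the degree-$2$ vertices (each with deficit $-2$), at a rate calibrated so that the reducibility restrictions guarantee every $2$-vertex reaches charge $0$ while every branch vertex keeps charge $\ge 0$: short threads bound how much each $2$-vertex must receive, and the cap on the number of degree-$2$ neighbors of a $3$-vertex stops it from overspending its surplus of $1$. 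If the rules are designed correctly this yields $\sum_v\mu(v)\ge 0$, contradicting the inequality above.

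The main obstacle I anticipate is the reducibility analysis around vertices of degree $3$. Unlike proper or even acyclic coloring, a star coloring can be blocked by a bicolored $P_4$ in which the newly colored vertex is merely an endpoint, so the forbidden-color count at $v$ picks up contributions from vertices at distance two; when several low-degree vertices cluster, these contributions pile up and the straightforward extension argument stalls. Getting past this — via carefully chosen deletion sets or a couple of ad hoc extension lemmas that exploit the exact color pattern on the boundary — is the crux, and it in turn dictates which configurations are actually reducible and hence how the discharging rules must be tuned. That coupling between the two phases is the usual source of friction, and making it close with only $6$ colors and the slack afforded by $\mad<\frac83$ is the delicate part.
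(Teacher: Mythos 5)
There is a genuine gap: what you have written is a plan, not a proof. Both halves of your argument are left as placeholders. In the reducibility phase you only gesture at the configurations (``a maximal degree-$2$ thread is short'', ``a $3$-vertex cannot be incident to too many degree-$2$ vertices \dots''), you never state them precisely, and you never carry out the extension arguments; in the discharging phase the rules are never specified (``if the rules are designed correctly this yields $\sum_v \mu(v)\ge 0$''), so the two phases cannot be checked against each other, and you yourself identify the coupling as the unresolved crux. Worse, the step you defer is exactly where this strategy tends to fail for star coloring: when you re-insert a deleted vertex $v$, the forbidden colors come not only from $N(v)$ but from bicolored paths through vertices at distance two, and in the low-degree clusters that survive the discharging constraints these counts routinely exceed $6$, so a greedy/extension argument does not close without a substantially different mechanism. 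Acknowledging the obstacle is not the same as overcoming it.

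It is also worth noting that Theorem~\ref{thm1.2} is quoted here from~\cite{2010KuTi}; this paper does not reprove it but instead establishes the stronger Theorem~\ref{thm:main:star-coloring} (indeed $\mad(G)\le \frac{8}{3}$ already gives star $5$-colorability) via Theorem~\ref{thm:main:8/3}: one produces a partition of $V(G)$ into a forest $F$ and two $2$-independent sets $I_\alpha, I_\beta$ (an {\FII}), and the star coloring then follows structurally --- three colors on $F$ by distance to a root modulo three, one color on each $I_\gamma$ --- with no partial-coloring extension at any point. The reducibility lemmas in that framework modify {\FII}s of smaller graphs (with potential-function bookkeeping, as in \eqref{eqref:basic:rho} and \eqref{eqref:basic:rho2}, to keep the auxiliary graphs within the $\mad$ bound), which is precisely how the distance-two obstructions you worry about are sidestepped. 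To turn your proposal into a proof you would either need to supply the missing reducible configurations, discharging rules, and extension lemmas in full (and verify that $6$ colors suffice at every extension step), or replace the extension paradigm by a partition statement of this kind.
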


Our main theorem improves both aforementioned theorems.
Note that Theorems~\ref{thm1.1} and~\ref{thm1.2} imply that a planar graph with girth at least 8 and 9 has a star coloring with 6 and 5 colors, respectively.
Likewise, as a direct consequence of our main result, we improve the best known girth condition for a planar graph to be star 5-colorable.
We now present our main result and its direct consequence:

\begin{theorem}\label{thm:main:star-coloring}
For a graph $G$,
if $\mad(G)\le \frac{8}{3}$, then $G$ is star 5-colorable.
\end{theorem}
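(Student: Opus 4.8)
The plan is to argue by minimal counterexample together with discharging at the sharp density threshold $\tfrac83$. Suppose the theorem fails and let $G$ be a counterexample minimizing $|V(G)|+|E(G)|$. Then $G$ is connected, $\mad(G)\le\tfrac83$, $G$ has no star $5$-coloring, while every graph $H$ with $|V(H)|+|E(H)|<|V(G)|+|E(G)|$ and $\mad(H)\le\tfrac83$ — in particular every proper subgraph of $G$ — does admit one. Everything reduces to extracting enough forbidden local structure from ``$G$ has no star $5$-coloring but all proper subgraphs do'' to contradict $\mad(G)\le\tfrac83$.

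First I would compile a list of \emph{reducible configurations}: local structures that cannot occur in $G$. The easy ones are standard — $\delta(G)\ge 2$ (a vertex of degree at most $1$ is deleted, the rest star-$5$-colored by minimality, and the vertex recoloured avoiding at most four forbidden colours: its one neighbour, plus the handful forced by bichromatic-$P_4$ constraints through it), and small sparse clusters such as two adjacent $2$-vertices or a $3$-vertex with two $2$-neighbours should likewise be reducible. The genuinely delicate point, absent from ordinary or even acyclic colouring, is that extending a partial star colouring to a deleted piece must avoid creating \emph{any} bichromatic path on four vertices, and such a path can reach vertices at distance two from the piece; consequently one typically deletes a small connected gadget (an edge, a short thread of $2$-vertices with its two endpoints, a tiny subtree) and each reducibility proof is a finite but honest case analysis over the colours appearing in the second neighbourhood of the gadget. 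It is probably cleanest to factor this through an intermediate notion — a partition of $V(G)$ into one ``forest-like'' part and two independent parts, from which a star $5$-colouring is directly read off — proving configurations reducible with respect to that notion; this is presumably the purpose of the partition-into-three-parts device flagged in the notation.

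With the reducible configurations established, I would run the discharging. Give each vertex $v$ the charge $\mu(v)=d(v)-\tfrac83$, so that $\sum_{v\in V(G)}\mu(v)=2|E(G)|-\tfrac83|V(G)|\le 0$ by $\mad(G)\le\tfrac83$; only $2$-vertices are negative ($-\tfrac23$ each), while a $d$-vertex with $d\ge3$ carries $d-\tfrac83\ge\tfrac13$. The rules move charge from vertices of degree at least $3$ to nearby $2$-vertices — e.g.\ each $\ge3$-vertex sends $\tfrac13$ along every incident edge heading, possibly through a short thread, to a $2$-vertex — calibrated precisely so that the excluded clusters force every $2$-vertex to collect at least $\tfrac23$ and every $\ge3$-vertex to retain at least $0$. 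Then $\sum_v\fc{v}\ge0$, so in fact equality holds throughout; reading off which inequalities were tight pins down the local structure of $G$ so rigidly (bounded degrees, prescribed numbers of $2$-neighbours, etc.) that $G$ must contain one of the reducible configurations, or be a short cycle — which is star-$5$-colourable outright — a contradiction. (Extra reducible configurations, a slightly finer rule, or a second discharging round may be needed to close this equality case, which is the price of the hypothesis $\mad(G)\le\tfrac83$ rather than the strict inequality used in Theorems~\ref{thm1.1} and~\ref{thm1.2}.)

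The main obstacle, and by far the bulk of the work, is the design and verification of the reducible configurations: they must be numerous and strong enough for the discharging to balance at the exact threshold $\tfrac83$, yet each must genuinely be reducible, and here ``reducible'' is not a one-line check but a colour-extension argument that has to control second-neighbourhood interactions. Getting the two halves — the configuration list and the discharging rules — to match at $\tfrac83$ is the delicate engineering; everything else is bookkeeping.
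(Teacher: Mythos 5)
Your high-level plan — minimal counterexample, an intermediate partition of $V(G)$ into a forest part plus two $2$-independent parts, reducible configurations verified by colour-extension, and a discharging scheme that balances at the threshold and whose equality case is then exploited — is exactly the shape of the paper's argument (they call the intermediate object an {\FII} and prove Theorem~\ref{thm:main:8/3}). But there is a concrete flaw at the very first step: you take a counterexample minimizing $|V(G)|+|E(G)|$, and that measure will not support the reducibility proofs one actually needs at the tight density $\tfrac83$. At this threshold one cannot simply delete a gadget $S$ and invoke minimality; to control the colours that leak back through second neighbourhoods, the proofs repeatedly replace $G-S$ by a graph $H'$ obtained from $G-S$ by \emph{attaching} auxiliary subgraphs (pendent triangles and the gadgets $J_1$, $J_2$ of Figure~\ref{fig:2B_2}) to force specified vertices into specified parts of the partition. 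These attachments typically make $|V(H')|+|E(H')|\ge|V(G)|+|E(G)|$, so your induction hypothesis does not apply to $H'$. The paper instead orders counterexamples lexicographically by $(|V^*(G)|,|V(G)|)$, where $V^*(G)$ excludes the $2$-vertices lying on pendent cycles — precisely the vertices these attachments create — so $|V^*(H')|<|V^*(G)|$ and induction goes through. The companion device you also need and don't mention is the potential function $\rho_H(A)=4|A|-3|E(H[A])|$, used to certify that the attached graph $H'$ still has $\mad(H')\le\tfrac83$; without it the attachments are not even legal.

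The second, larger, omission is that you postpone the equality case with ``extra configurations or a second round may be needed,'' but that case is where the real work is. After discharging, every vertex has final charge exactly $\tfrac83$, and the paper converts the resulting rigid structure (properties {\bf[P1]}–{\bf[P5]}) plus a second batch of configurations {\bf[C${}^\prime$1]}–{\bf[C${}^\prime$5]} into an explicit {\FII} of $G$ by a fairly intricate hand-built partition of $\mathcal V_{4^+}$, $W_5$, and the pendent-triangle $2$-vertices; nothing resembling ``$G$ must be a short cycle'' comes out of it. Your proposal identifies that something is missing here but offers no mechanism to supply it, so as written the argument does not close.
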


\begin{corollary}
A planar graph with girth at least 8 is star 5-colorable.
\end{corollary}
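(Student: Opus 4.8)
The corollary is immediate from Theorem~\ref{thm:main:star-coloring}: a planar graph $G$ of girth at least $8$ satisfies $\mad(G)<\frac{2\cdot 8}{8-2}=\frac{8}{3}$, so the theorem applies. Hence the content to be proved is Theorem~\ref{thm:main:star-coloring}, and I describe how I would prove it.

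The plan is a discharging argument. Suppose $G$ is a counterexample minimizing $|V(G)|+|E(G)|$; then $G$ is connected, and since $\mad$ is monotone under subgraphs, every proper subgraph of $G$ is star $5$-colorable while $G$ itself is not. The first and hardest task is to build a list of \emph{reducible configurations} that cannot occur in $G$. I expect these to describe the local arrangement of vertices of degree $2$ and $3$: for instance that $\delta(G)\ge 2$; that maximal threads of consecutive $2$-vertices are short and, where present, are incident to vertices of sufficiently large degree; that a $3$-vertex cannot carry too many $2$-neighbors, nor too many $3$-neighbors in a bad pattern; roughly, that every $2$-vertex is ``supported'' by nearby vertices of degree at least $3$ (sometimes at least $4$). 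Each configuration is shown reducible in the standard way: delete a few vertices or edges to get a smaller graph, star $5$-color it by minimality, and extend to $G$, recoloring a bounded neighborhood of the deleted part if necessary. The subtlety, and the reason this step is the main obstacle, is the star constraint itself: a color is forbidden at a vertex $v$ not only when a neighbor wears it, but also when assigning it would complete a bichromatic path on four vertices through $v$; so one must count forbidden colors as a function of the degrees and the colors already present in the radius-$2$ ball around $v$, and supply recoloring moves precisely when the crude count falls short. With only $5$ colors at the tight density $\frac{8}{3}$ the margin is thin, and locating a family of configurations that is simultaneously reducible and sufficient for the discharging is where the work lies.

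Granting the structural lemmas, the discharging itself should be routine. Assign each vertex $v$ the initial charge $\mu(v)=d(v)-\frac{8}{3}$, so that $\sum_{v}\mu(v)=2|E(G)|-\frac{8}{3}|V(G)|\le 0$. A $2$-vertex starts with $-\frac23$ and every vertex of degree $d\ge 3$ starts with $d-\frac83\ge\frac13$. I would then discharge by having vertices of degree at least $3$ (more precisely, the ``rich'' vertices identified by the reducible configurations) pass fixed amounts of charge along threads to their $2$-vertices, the amounts tuned so that the absence of the reducible configurations forces every vertex to finish with nonnegative charge, and at least one vertex---a vertex of degree at least $4$, or, to handle the boundary case $\mad(G)=\frac83$, one whose local structure leaves a surplus---to finish with strictly positive charge. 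This contradicts $\sum_v\mu^*(v)=\sum_v\mu(v)\le 0$ and completes the proof. Equivalently, the same bookkeeping can be packaged as constructing an $\mathcal{F}\mathcal{I}\mathcal{I}$-partition of $G$---a partition of $V(G)$ into a part inducing a forest and two independent parts satisfying extra compatibility conditions---from which a star $5$-coloring is read off directly; the combinatorial heart is unchanged.
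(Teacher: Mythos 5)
Your derivation of the corollary is exactly the paper's: a planar graph of girth at least $8$ has $\mad(G)<\frac{2\cdot 8}{8-2}=\frac{8}{3}$, so Theorem~\ref{thm:main:star-coloring} applies immediately. The additional sketch of how one might prove Theorem~\ref{thm:main:star-coloring} is only an outline (the reducible configurations and discharging rules are not specified), but that theorem is proved in the paper, so nothing further is needed for the corollary itself.
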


We actually prove a stronger statement, guaranteeing a certain partition of the vertices that implies the existence of a star 5-coloring.
For a positive integer $k$, a {\it $k$-independent set} of a graph $G$ is a subset $S$ of $V(G)$ such that a pair of vertices in $S$ has distance at least $k+1$ in $G$.
For two disjoint sets $A$ and $B$, let $A \sqcup B$ denote the disjoint union of $A$ and $B$.
We say a graph $G$ has an
{\FIIe}
 \fii{F}{I_{\alpha}}{I_{\beta}},  if $F, I_{\alpha}, I_{\beta}$ is a partition of $V(G)$, each of $I_{\alpha}$ and  $I_{\beta}$ induces a 2-independent set, and $F$ induces a forest.
Since a forest is star 3-colorable (by picking a root and coloring the vertices according to the distance to the root modulo three), if a graph $G$ has an {\FII} \fii{F}{I_{\alpha}}{I_{\beta}}, then $G$ is star 5-colorable; use three colors on $F$, one color on $I_\alpha$, and one color on $I_\beta$.
The above idea of using a 2-independent set first appeared in Albertson et al. \cite{2004AlChKiKuRa}.
Hence, in order to prove Theorem~\ref{thm:main:star-coloring}, it is sufficient to show the following  Theorem~\ref{thm:main:8/3}.

\begin{theorem}\label{thm:main:8/3}
For a graph $G$, if $\mad(G)\le \frac{8}{3}$, then  $G$ has an {\FII}.
\end{theorem}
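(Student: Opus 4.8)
The plan is to argue by the discharging method applied to a minimal counterexample, following the general scheme of earlier results such as Theorems~\ref{thm1.1} and~\ref{thm1.2} but with considerably more care, since the hypothesis $\mad(G)\le\frac{8}{3}$ is not strict and only three blocks are available (one forest and two $2$-independent sets). Suppose the statement fails, and let $G$ be a counterexample with $|V(G)|$ as small as possible and, among those, $|E(G)|$ as small as possible. Then $G$ is connected (otherwise a component would be a smaller counterexample, using that a disjoint union of forests is a forest and a disjoint union of $2$-independent sets is $2$-independent), every graph with fewer vertices has an $\mathcal{F}\mathcal{I}\mathcal{I}$-partition, and so does every proper subgraph of $G$; moreover $2|E(H)|\le\frac{8}{3}|V(H)|$, equivalently $3|E(H)|\le 4|V(H)|$, for every $H\subseteq G$.

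The first task is to isolate a family of \emph{reducible configurations}, i.e.\ local structures that $G$ cannot contain. The simplest is $\delta(G)\ge 2$: delete a vertex of degree at most $1$, take an $\mathcal{F}\mathcal{I}\mathcal{I}$-partition of the rest, and return the vertex to the forest block, which stays a forest. Next I would show that \emph{threads} (maximal paths of degree-$2$ vertices) are short: a long thread can be replaced by a shorter one — deleting interior vertices, and adding an edge between the two resulting endpoints when they are non-adjacent — and one checks that this does not increase the maximum average degree, essentially because a densest subgraph of $G$ cannot pass through a chain of degree-$2$ vertices; minimality then provides a partition of the smaller graph, which is extended by reinserting the deleted vertices along the short path, where there is ample freedom. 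With thread lengths bounded, the remaining configurations constrain the neighbourhoods of degree-$3$ vertices — the unavoidable ``weak'' vertices carrying surplus exactly $\frac{1}{3}$ — for instance limiting how many degree-$2$ neighbours such a vertex may have, and forbidding certain small subgraphs in which a degree-$3$ vertex separates two short threads. Each reducibility claim is proved by deleting a bounded piece, partitioning the remainder by minimality, and extending. I expect the extension to be the main obstacle: assigning a returned vertex to the forest block is legal only when it does not close a cycle there — a condition that is essentially \emph{global} and must be tracked through the components of the forest — while assigning it to one of the two independent blocks is constrained by the colours present within distance $2$; when a few forbidden vertices at distance $\le 2$ already meet both independent blocks and the forest is also blocked, one must recolour a bounded neighbourhood, and proving that such a recolouring always exists — and choosing the configuration list so that this is possible — is where the real work lies, markedly tighter than the corresponding arguments for $6$ colours under a strict degree bound.

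Finally I would run the discharging phase. Give each vertex $v$ the charge $\mu(v)=d(v)-\frac{8}{3}$, so that $\sum_{v}\mu(v)=2|E(G)|-\frac{8}{3}|V(G)|\le 0$; only degree-$2$ vertices have negative charge, each equal to $-\frac{2}{3}$, while degree-$3$ vertices have $\frac{1}{3}$ and vertices of degree $d\ge 4$ have $d-\frac{8}{3}\ge\frac{4}{3}$. The rules move charge from vertices of degree at least $3$ to the degree-$2$ vertices on incident threads, with higher-degree vertices also helping threads nearby; the bounds on thread length and on the degree-$2$ structure around degree-$3$ vertices coming from the reducible configurations are exactly what is needed to make every vertex end with nonnegative charge. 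Since $\sum_v\mu(v)\le 0$ holds only weakly, one last step will be required: using the configuration list, rule out the extremal graphs in which discharging would leave every charge equal to zero — these are the graphs attaining equality in $\mad(G)\le\frac{8}{3}$ with a balanced mixture of vertices of degree $2$ and $3$ — so that some vertex retains strictly positive final charge. Then $0<\sum_{v}\mu^{*}(v)=\sum_{v}\mu(v)\le 0$, a contradiction, and Theorem~\ref{thm:main:8/3} follows.
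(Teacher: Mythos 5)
Your high-level scheme --- minimal counterexample, reducible configurations, discharging --- matches the paper's, but two of the specific steps you rely on do not survive scrutiny, and they are exactly the places where the paper's proof has real content. The thread-shortening reduction is unsound as stated. Shortening a thread by $m$ interior $2$-vertices (deleting them and restoring an edge) removes $m$ vertices and $m$ edges, hence decreases the potential $\rho(A)=4|A|-3|E(G[A])|$ of any $A$ containing the thread by exactly $m$; since $\mad(G)\le\frac83$ only guarantees $\rho\ge 0$, a tight $A$ is pushed below $0$. Concretely, let $H_0$ be two triangles $abc,def$ with the extra edges $ad,be$ (so $\mad(H_0)=\frac83$), and add a thread $c\,x_1x_2x_3\,f$. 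The resulting $G$ has $9$ vertices, $12$ edges, $\mad(G)=\frac83$, but deleting $x_2$ and adding $x_1x_3$ gives $8$ vertices and $11$ edges, $\mad=\frac{11}{4}>\frac83$. So minimality gives you nothing about the shortened graph, and your supporting claim (``a densest subgraph of $G$ cannot pass through a chain of degree-$2$ vertices'') is simply false --- a cycle is its own densest subgraph. The paper avoids this by never shortening: it proves \rc{rc-22-c} (Lemma~\ref{lem:rc-22-c}) by deleting the two $2$-vertices and \emph{attaching a pendent triangle} to a neighbour to force that neighbour's colour via Lemma~\ref{lema:2B2}. This is only legal because the paper's minimality measure is $|V^*(G)|$ (vertices that are not $2$-vertices on pendent cycles), which strictly decreases under such gadget attachments even when $|V|$ and $|E|$ do not. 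Your measure $(|V|,|E|)$ would not support most of the reductions [C1]--[C10], [C${}^{\prime}$1]--[C${}^{\prime}$5], nearly all of which attach $J_1$, $J_2$, or pendent triangles and track $\rho^*$.

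Your description of the endgame also underestimates it. Because the hypothesis $\mad(G)\le\frac83$ is not strict, after discharging every vertex can have final charge exactly $\frac83$, and no single vertex need carry a surplus. The paper does not argue that ``some vertex retains strictly positive charge.'' Instead, it derives from the all-tight case the rigid structural facts \ra{ra-6p}--\ra{ra-3p}, uses the extra configurations [C${}^{\prime}$1]--[C${}^{\prime}$5] to conclude $V_3\cup W_4=\emptyset$, and then explicitly \emph{constructs} an {\FII} of the remaining graph (the partitions of $\mathcal V_{4^+}$, $W_5$, and $T$ at the end of Section~\ref{sec:outline:main}). That construction, not a charge count, is the final contradiction, and ``ruling out the extremal graphs'' hides precisely this piece of work.
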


The paper is organized as follows. The proof of Theorem~\ref{thm:main:8/3} is split into  Sections~\ref{sec:outline:main}, \ref{sec:proof:configuration},~and~\ref{sec:proof:configuration2}. Section~\ref{sec:outline:main} lays out the discharging rules and reducible configurations. Sections~\ref{sec:proof:configuration} and~\ref{sec:proof:configuration2} provide
the proofs of the reducible configurations.
We conclude with  questions and tightness bounds in Section~\ref{sec:remark}.

We list some important definitions used in this paper.
We use $[n]$ to denote the set $\{1, \ldots, n\}$.
Let $G$ be a graph.
For  $S\subset V(G)$, let $G-S$ denote the graph obtained from $G$ by deleting the vertices in $S$.
If $S=\{x\}$, then we denote $G-S$ by $G-x$.
Likewise, in order to improve the readability of the paper, we often drop the braces and commas to denote a set and use `$+$' for the set operation `$\cup$'.
For instance, given $A\subset V(G)$ and $x,y,z\in V(G)$, we use $A+x-y$  and $A-z+xy$ to denote $(A\cup\{x\})\setminus\{y\}$  and $(A\setminus\{z\})\cup\{x,y\}$, respectively.

A {\it $d^+$-vertex}, {\it $d$-vertex}, {\it $d^-$-vertex} is a vertex of degree at least $d$, exactly $d$, at most $d$, respectively.
Given a vertex $x$, a neighbor of $x$ with degree at least $d$, exactly $d$, at most $d$ is called a {\it $d^+$-neighbor}, {\it $d$-neighbor}, {\it $d^-$-neighbor}, respectively.
For $S\subset V(G)$, a vertex in a set $S$ is called an {\it $S$-vertex}. Similarly, we say $u$ is an $S$-{\it neighbor} of a vertex $v$ if $u\in N_G(v)\cap S$.
A {\it pendent $k$-cycle} is a  cycle of length $k$ where all its vertices except one vertex $x$ are $2$-vertices;
we also say this cycle is {\it at the vertex $x$}.
A $3$-cycle is also called a \textit{triangle}.

We finish this section with observations, which is frequently used in the proof.

\begin{lemma}\label{lema:2B2}
Let  $H$ be a graph with an {\FII} \fii{F}{I_{\alpha}}{I_{\beta}}.
\begin{itemize}
    \item[\rm(i)] If $H$ has an induced subgraph isomorphic to $J_1$ in Figure~\ref{fig:2B_2} and $v^*\in I_{\alpha}$, then $w_1\in I_{\beta}$.
        \item[\rm(ii)] If $H$ has an induced subgraph  isomorphic to $J_2$ in Figure~\ref{fig:2B_2}, then $v^*\in F$.
\end{itemize}
\end{lemma}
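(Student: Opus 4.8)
The plan is to deduce both parts directly from the two defining properties of an \FII{} \fii{F}{I_{\alpha}}{I_{\beta}} of $H$: \textbf{(P1)}~each of $I_{\alpha}$ and $I_{\beta}$ is a $2$-independent set, so no two of its vertices lie at distance at most $2$ in $H$ --- in particular each of $I_{\alpha},I_{\beta}$ is independent and no two of its vertices have a common neighbor; and \textbf{(P2)}~$H[F]$ is a forest, so there is no cycle of $H$ all of whose vertices lie in $F$. The argument is then a short propagation: once a vertex is known to lie in $I_{\alpha}$, (P1) removes $I_{\alpha}$ as an option for every vertex within distance $2$ of it; a $2$-vertex both of whose neighbors have already been removed from a fixed $I$-class is then pushed into the other $I$-class or into $F$; and if the vertices so forced into $F$ ever include all the vertices of a cycle of $H$, (P2) is violated. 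We also record the symmetry that, if \fii{F}{I_{\alpha}}{I_{\beta}} is an \FII{} of $H$, then so is \fii{F}{I_{\beta}}{I_{\alpha}}; this is used in part (ii).

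For part (i), assume $v^*\in I_{\alpha}$. By the shape of $J_1$ in Figure~\ref{fig:2B_2}, the vertex $w_1$ lies within distance $2$ of $v^*$, so (P1) gives $w_1\notin I_{\alpha}$, i.e.\ $w_1\in F\cup I_{\beta}$. It remains to rule out $w_1\in F$. So suppose $w_1\in F$ and iterate the propagation from $v^*\in I_{\alpha}$: the $2$-vertices of $J_1$ adjacent to $v^*$ avoid $I_{\alpha}$, and since (by (P1)) at most one among any family of them sharing a common neighbor lies in $I_{\beta}$, all but at most one of them are forced into $F$; continuing this way over $J_1$, one reaches a configuration in which all vertices of a cycle of $H$ lie in $F$, contradicting (P2). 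Hence $w_1\in I_{\beta}$.

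For part (ii), suppose toward a contradiction that $v^*\notin F$; then $v^*\in I_{\alpha}\cup I_{\beta}$, and by the symmetry above we may assume $v^*\in I_{\alpha}$. Running the same propagation on $J_2$, property (P1) excludes $I_{\alpha}$ at every vertex within distance $2$ of $v^*$ and excludes $I_{\beta}$ at every pair of such vertices at distance at most $2$ from each other; what remains forces a set of vertices containing all the vertices of a cycle of $H$ entirely into $F$, contradicting (P2). Therefore $v^*\in F$.

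I expect the only real work to be configuration-specific bookkeeping: for the particular graphs $J_1$ and $J_2$ of Figure~\ref{fig:2B_2} one has to identify exactly which vertices are pairwise within distance $2$ (to license the ``at most one in a given $I$-class'' deductions) and then check that the vertices ultimately forced into $F$ really do span a cycle of $H$. There is no discharging or other global ingredient --- the lemma is a finite case check driven entirely by (P1) and (P2), which is precisely why it can be invoked freely as an auxiliary observation in the main proof.
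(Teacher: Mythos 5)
Your overall framework is the same as the paper's: the only tools are that $I_\alpha,I_\beta$ are $2$-independent and $H[F]$ is a forest, and you propagate constraints outward from $v^*$. Your part (i) is essentially complete, modulo sloppy wording (the four $2$-vertices of $J_1$ are adjacent to $w_1$, not to $v^*$), and it is the ``dual'' of the paper's argument: you use $2$-independence of $I_\beta$ first (at most one of the four $2$-vertices is in $I_\beta$, so at least three lie in $F$) and finish with the forest property via pigeonhole (some pendent triangle plus $w_1$ lies wholly in $F$); the paper instead uses the forest property first (each pendent triangle, being unable to sit inside $F$ and barred from $I_\alpha$ by distance, must contribute a vertex to $I_\beta$) and finishes with $2$-independence (those two contributions are both adjacent to $w_1$). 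Either order closes the argument, and the paper's version is a touch shorter.

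Part (ii), however, has a genuine gap as written. You describe the propagation as ``$I_\alpha$ is excluded at every vertex within distance $2$ of $v^*$, and $I_\beta$ is excluded at pairs of such vertices within distance $2$ of each other.'' In $J_2$ (the path $v^*w_1w_2$ with two pendent triangles at each of $w_1,w_2$), the vertices within distance $2$ of $v^*$ are $w_1$, $w_2$, and the four triangle $2$-vertices at $w_1$; nothing near $w_2$'s triangles is touched. If $w_1\in F$ you do obtain a cycle as in (i), but the troublesome case is $w_1\in I_\beta$: then the five vertices of the $w_1$-cluster and $w_2$ are forced to $\{I_\beta\}\cup F$ with $w_1$ the unique $I_\beta$-vertex, the four triangle vertices at $w_1$ lie in $F$ without $w_1$, so no cycle is produced there, and the contradiction has to come from $w_2$'s pendent triangles --- whose $2$-vertices are at distance $3$ from $v^*$ and hence outside the radius of your stated propagation. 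You must either extend the propagation another step (use $w_1\in I_\beta$ to bar those $2$-vertices from $I_\beta$, then pigeonhole as before), or, more cleanly and as the paper does, recognize that (i) is the natural subroutine: apply (i) to the copy of $J_1$ rooted at $v^*$ to get $w_1\in I_\beta$, then apply (i) to the copy of $J_1$ rooted at $w_1$ (with the roles of $\alpha,\beta$ swapped) to get $w_2\in I_\alpha$, and note that $v^*$ and $w_2$ are two $I_\alpha$-vertices at distance $2$. Relatedly, your closing remark that the ``configuration-specific bookkeeping'' is deferred is precisely where the substance of the lemma lives; a proof needs to do that bookkeeping, not gesture at it.
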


\begin{figure}[ht]
	\centering
  \includegraphics[scale=0.75,page=1]{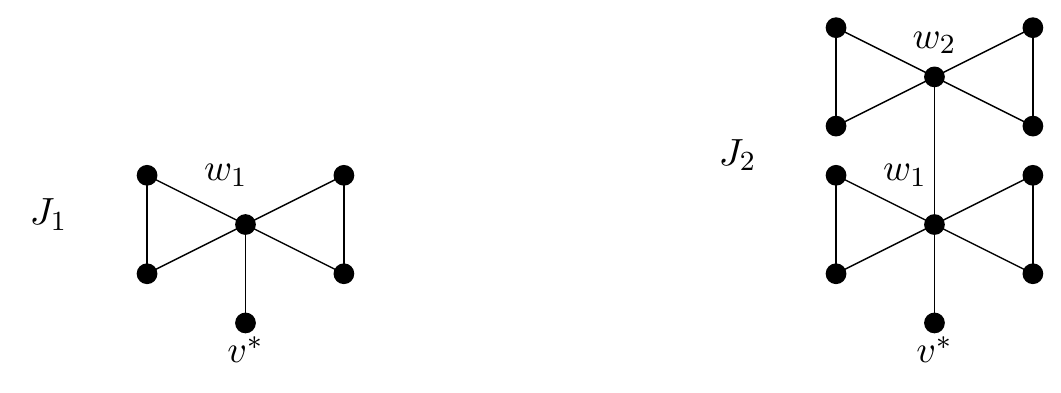}
  \caption{The graphs $J_1$ and $J_2$.}
  \label{fig:2B_2}
\end{figure}

\begin{proof}
To show (i), suppose to the contrary that $w_1\not\in I_\beta$. Then  $w_1\in F$, and so one vertex from each pendent triangle at $w_1$ is in $I_\beta$, which is a contradiction since $I_\beta$ is a 2-independent set.
To show (ii), suppose to the contrary that $v^*\not\in F$, say $I_{\alpha}$.
By (i), $w_1\in I_\beta$, and again by (i), $w_2\in I_\alpha$, which is a contradiction since $I_\alpha$ is a 2-independent set.
\end{proof}

In order to check the maximum average degree of a graph, we often use the potential function.
For a graph $H$, let $\rho_H:2^{V(H)} \rightarrow \mathbb{Z}$ be the function such that $\rho_H(A)=4|A|-3|E(H[A])|$ for $A\subset V(H)$, called the \textit{potential function} for $H$.
We use the following, which is straightforward from the definition:
\[ \qquad \rho_H(A)\ge 0\text{ for all }A\subset V(H)\text{ if and only if }\mad(H) \le \frac{8}{3}.\]
For $I\subset V(H)$, let \[  \rho^*_H(I)=\min\{ \rho_H(K)\mid I\subset K \subset V(H) \} .\]
For brevity, we often drop the braces and commas to denote $\rho^*_H(A)$, such as $\rho^*_H(ab)$ instead of $\rho_H^*(\{a,b\})$.
An easy counting argument shows that for subsets $A$ and $B$ of $V(H)$,
$\rho_H(A)+\rho_H(B) \ge \rho_H(A\cup B)+\rho_H(A\cap B)$.
This further implies the following:
\begin{eqnarray}\label{eqref:basic:rho}
&&\rho^*_H(A)+\rho^*_H(B) \ge \rho^*_H(A\cup B)+\rho^*_H(A\cap B).
\end{eqnarray}
For a graph $H$ with $\mad(H)\le \frac{8}{3}$ and disjoint subsets $S,T$ of $V(H)$, if $T$ contains all vertices not in $S$ that are adjacent to an $S$-vertex, then since $\rho_{H}(S\cup T)\ge 0$,
we have
\begin{eqnarray}\label{eqref:basic:rho2}
&&
\rho^*_{H-S}(T) \ge -4\cdot |S|+3\cdot
(\text{the number of edges in }H\text{ incident with an }S\text{-vertex}).
\end{eqnarray}
Finally, for graphs $H$ and $J$ with $\mad(H), \mad(J)\le \frac{8}{3}$,
let $H'$ be the graph obtained from $H$ by attaching $J$ to a vertex,
that is, identifying a vertex $v$ of $H$ and a vertex $w$ of $J$.
If $\rho^*_H(v)\ge k\ge 0$ and $\rho_J^*(w)\ge 4-k$, then attaching $J$ decreases the potential by at most $k$, and so $\mad(H')\le \frac{8}{3}$.
Throughout the paper, we often attach a pendent triangle, $J_1$, and $J_2$, which decreases the potential by at most 1, 1, and 2, respectively.

\section{Discharging Procedure}\label{sec:outline:main}

Throughout the figures in the paper, the degree of a solid (black) vertex is the number of incident edges drawn in the figure, whereas a hollow (white) vertex means a $2^+$-vertex. For a graph $H$, let $V^*(H)$ be the set of vertices of $H$ except the 2-vertices on a pendent cycle.
Suppose to the contrary that a counterexample $G$ exists to Theorem~\ref{thm:main:8/3};
namely, $\mad(G)\le \frac{8}{3}$ but $G$ has no {\FII}.
Choose $G$ to be a minimum counterexample with respect to
\begin{quote}
 (1) $|V^*(G)|$ is minimum, \quad (2) $|V(G)|$ is minimum.
\end{quote}

We provide a list of subgraphs where each subgraph does not appear in $G$; each subgraph is also referred to as a {\it reducible configuration}.
We first define the following sets, see Figure~\ref{fig:Wsets}.
\begin{eqnarray*}
W_2&=&\{x\in V(G) :x\text{ is a }2\text{-vertex not on a pendent triangle} \}\\
W_3&=&\{x\in V(G) : x\text{ is a }3\text{-vertex with  two 2-neighbors} \}\\
W_4&=&\{x\in V(G) :x\text{ is a }4\text{-vertex on one pendent triangle} \}\\
W_5&=&\{x\in V(G): x\text{ is a }5\text{-vertex on two pendent triangles} \}\\
V_k&=&\{x\in V(G): x\text{ is a }k\text{-vertex not in }W_k\} \qquad (k\in \{3,4,5,6\}).
\end{eqnarray*}
For brevity, we use $W_{ij}$, $W_{ijk}$, and $W_{2345}$ to denote $W_{i}\cup  W_{j}$, $W_{i}\cup  W_{j}\cup W_k$, and $W_2\cup W_3\cup W_4\cup W_5$, respectively.
\begin{figure}[ht]
	\centering
  \includegraphics[scale=0.75,page=2]{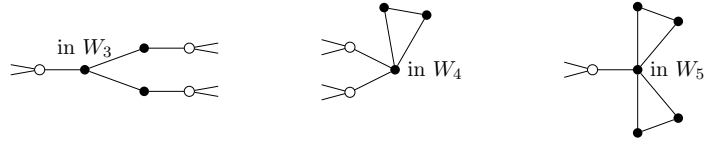}
  \caption{An illustration of a vertex in $W_3, W_4$, and $W_5$. }
  \label{fig:Wsets}
\end{figure}

The following is a list of  reducible configurations.
We will use {\bf[C1]}-{\bf[C10]} to show that every vertex has final charge exactly $\frac{8}{3}$ after applying our discharging rules.
The configurations {\bf[C${}^{\prime}$1]}-{\bf[C${}^{\prime}$5]} are utilized in the final step to reach a contradiction. We postpone the proofs to Sections~\ref{sec:proof:configuration} and~\ref{sec:proof:configuration2}.

\begin{enumerate}[{\bf [C1]}]
\item\label{rc-1vx} (Lemma~\ref{lem:rc-1}) A $1^{-}$-vertex.\vspace{-0.15cm}
\item\label{rc-22-c} (Lemma \ref{lem:rc-22-c}) Two adjacent 2-vertices not on a pendent triangle.\vspace{-0.15cm}
\item\label{rc-3-222} (Lemma~\ref{lem:rc-3}) A $3$-vertex with only 2-neighbors.\vspace{-0.15cm}
\item\label{rc-3p} (Lemma~\ref{lem:rc-3}) A $3$-vertex on a pendent triangle.\vspace{-0.15cm}
\item\label{rc-w3w3} (Lemma~\ref{lem:rc-w3w3})  Two adjacent $W_3$-vertices.\vspace{-0.15cm}
\item\label{rc-3-w3''}  (Lemma~\ref{lem:rc-w3''})  A 3-vertex with a 2-neighbor and a $W_3$-neighbor.\vspace{-0.15cm}
\item\label{rc-3-w3'} (Lemma~\ref{lem:rc-w3'}) A 3-vertex with two $W_3$-neighbors.\vspace{-0.15cm}
\item\label{rc-4p-w2w3w5} (Lemma~\ref{lem:rc-4p}) A $W_4$-vertex with a $W_{2345}$-neighbor.\vspace{-0.15cm}
\item\label{rc-w5} (Lemma~\ref{lem:rc-w5})
A $W_5$-vertex with either a $3$-neighbor or a $W_{25}$-neighbor. \vspace{-0.15cm}
\item\label{rc-w7} (Lemma~\ref{lem:rc-w7})
A 7-vertex on three pendent triangles with a $W_{235}$-neighbor.\vspace{-0.15cm}
\end{enumerate}
\begin{enumerate}[{\bf[C${}^{\prime}$1]}]
\item \label{rcp-cycle-w3-w2} (Lemma~\ref{lem:rc-cycle-w3-w2})
A cycle consisting of $W_{23}$-vertices.\vspace{-0.15cm}
\item\label{rcp-cycle-3-d4} (Lemma~\ref{lem:rc-cycle-3-d4}) A cycle consisting of $(V_3\cup W_4)$-vertices where every $V_3$-vertex has a $W_{23}$-neighbor.\vspace{-0.15cm}
\item \label{rcp-4-only2} (Lemma~\ref{lemma:rc-4-only2}) A $V_4$-vertex with all $W_{235}$-neighbors that has either two $W_2$-neighbors or two  $W_5$-neighbors. \vspace{-0.15cm}
\item \label{rcp-5-one-pendent}
(Lemma~\ref{lem:rc-5-one-pendent})
A $5$-vertex on one pendent triangle with three $W_{235}$-neighbors where two are $W_2$-neighbors. \vspace{-0.15cm}
\item \label{rcp-6-two-pendent}
(Lemma~\ref{lem:rc-6-two-pendent})
A 6-vertex on two pendent triangles with a $W_{235}$-neighbor and a different $W_{25}$-neighbor. \vspace{-0.15cm}
\end{enumerate}

We will use the discharging method.
For each vertex $v$ of $G$, let the {\it initial charge} $\mu(v)$ of $v$ be its degree, namely, $\mu(v)=\deg_G(v)$.
Note that the average initial charge (over all vertices) is at most $\frac{8}{3}$ since $\mad(G)\leq \frac{8}{3}$.
Next, we distribute the charge according to the following {\it discharging rules}, which are designed so that the total charge is preserved, to obtain the {\it final charge} $\mu^*(v)$ at each vertex $v$.
See Figure~\ref{fig:rules}.

\bigskip

\noindent{\bf Discharging Rules}
\begin{itemize}
\item[{\bf R1}] A $3^+$-vertex sends charge $\frac{2}{3}$ to each of its 2-neighbors on a pendent cycle.\vspace{-0.2cm}
\item[{\bf R2}] A $3^+$-vertex sends charge $\frac{1}{3}$ to each of its $W_2$-neighbors.\vspace{-0.2cm}
\item[{\bf R3}] A $3^+$-vertex sends charge $\frac{1}{3}$ to each of its $W_3$-neighbors.\vspace{-0.2cm}
\item[{\bf R4}] A $4^+$-vertex sends charge $\frac{1}{3}$ to each of its $W_5$-neighbors.
\end{itemize}

\begin{figure}[ht]
\centering
  \includegraphics[scale=0.75,page=3]{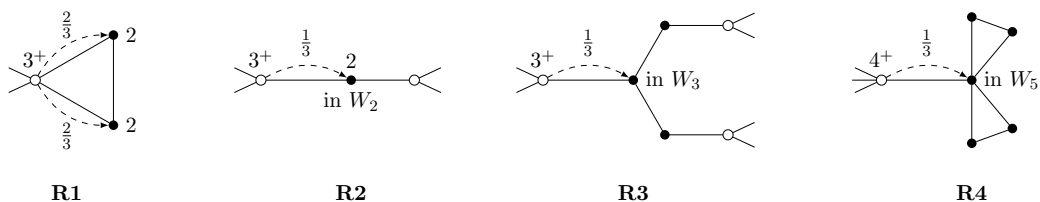}
  \caption{An illustration of the discharging rules. }
  \label{fig:rules}
\end{figure}

First, we will show that the final charge at each vertex is at least $\frac{8}{3}$.
Let $u$ be a $k$-vertex of $G$, and label the neighbors $u_1, \ldots, u_k$ of $u$ in such a way that $\deg_G(u_1)\leq \cdots \leq \deg_G(u_k)$.
By \rc{rc-1vx}, $k\geq 2$.
Note that $W_3$ is the set of all $3$-vertices with exactly two 2-neighbors by \rc{rc-3-222}, and moreover $W_3$ is an independent set by \rc{rc-w3w3}.
Also, by \rc{rc-22-c}, all pendent cycles of $G$ are triangles, and a 2-vertex with a 2-neighbor is on a pendent triangle.

\begin{itemize}
  \item[(1)] Assume $\deg_G(u)=2$.
  Note that a $2$-vertex does not send any charge by the discharging rules.
  If $u$ is on a pendent triangle, then it has a $3^+$-neighbor, which sends charge $\frac{2}{3}$ to $u$ by {\bf R1}.
  Thus, the final charge $\fc u$ of $u$ is at least $2+\frac{2}{3}=\frac{8}{3}$.
      If $u$ is not on a pendent triangle, then both neighbors of $u$ are $3^+$-vertices by \rc{rc-22-c}, so $\fc u= 2+2\cdot \frac{1}{3}=\frac{8}{3}$ by {\bf R2}.

  \item[(2)]
  Assume $\deg_G(u)=3$.
By \rc{rc-3-222}, $u$ has at most two $2$-neighbors, so the neighbor $u_3$ is a $3^+$-vertex.
 Also, $u$ is not on a pendent triangle by \rc{rc-3p}, so $u$ sends charge   $\frac{1}{3}$ to each $W_{23}$-neighbor.

\begin{itemize}
\item[(2-1)]
Suppose $u$ has exactly two 2-neighbors $u_1$ and $u_2$, that is, $u\in W_3$. So $u$ sends charge $1\over 3$ to each of $u_1$ and $u_2$ by {\bf R2}.
By \rc{rc-w3w3}, $u_3\not\in W_3$ and so $u$ does not send any charge to $u_3$. However, by {\bf R3} $u_3$ sends charge $\frac{1}{3}$ to $u$.
Hence $\fc u=3-2\cdot\frac{1}{3}+\frac{1}{3}=\frac{8}{3}$.

\item[(2-2)]
Suppose $u$ has exactly one 2-neighbor $u_1$. So $u$ sends charge $\frac{1}{3}$ to $u_1$ by {\bf R2}.
By \rc{rc-3-w3''}, $u_2,u_3\not\in W_3$, and so
 $u$ does not send any charge to $u_2,u_3$. Hence, $\fc u=3-\frac{1}{3}=\frac{8}{3}$.

\item[(2-3)] Suppose $u$ has no 2-neighbors. So $u$ sends charge only to $W_3$-neighbors.
By \rc{rc-3-w3'}, $u$ has at most one $W_3$-neighbor.
Thus, it sends charge at most $\frac{1}{3}$ by {\bf R3}, and so  $\fc u\ge 3- \frac{1}{3}=\frac{8}{3}$.
\end{itemize}

\item[(3)]
Assume $\deg_G(u)=4$.
If $u$ is on two pendent triangles, then the entire graph is formed by identifying two triangles at one vertex, which has an {\FII}.
If $u\in W_4$, then by \rc{rc-4p-w2w3w5}, $u$ sends charge only to neighbors on the pendent triangle.
Thus, $u$ sends charge $\frac{2}{3}$ to each of its 2-neighbors on the pendent triangle by {\bf R1}, so, $\fc u=4-2\cdot\frac{2}{3}=\frac{8}{3}$.
If $u$ is not on a pendent triangle, then it sends charge at most $\frac{1}{3}$ to each of its neighbors by {\bf R2}-{\bf R4}, so $\fc u\geq 4-4\cdot\frac{1}{3}=\frac{8}{3}$.

\item[(4)] Assume $\deg_G(u)=5$.
If $u \in W_5$, then by \rc{rc-w5}, the neighbor $u_5$ is a $4^+$-vertex, which is not in $W_5$.
By {\bf R4}, $u_5$ sends charge $\frac{1}{3}$  to $u$. Since $u_5\not\in W_5$, $u$ sends no charge to $u_5$.
By {\bf R1}, $u$ sends charge $\frac{2}{3}$ to each of its 2-neighbors on a pendent triangle, hence, $\fc u=5-4\cdot\frac{2}{3}+\frac{1}{3}=\frac{8}{3}$.
If $u$ is on exactly one pendent triangle, then $u$ sends charge $\frac{2}{3}$ to each of its $2$-neighbors on a pendent triangle by {\bf R1} and sends charge at most $\frac{1}{3}$ to each of the other neighbors by {\bf R2}-{\bf R4}, so $\fc u\ge 5-2\cdot\frac{2}{3}-3\cdot\frac{1}{3}=\frac{8}{3}$.
If $u$ not on a pendent triangle, then $\fc u\ge 5-5\cdot\frac{1}{3}=\frac{10}{3}>\frac{8}{3}$.

\item[(5)] Assume $\deg_G(u)\ge 6$.
Suppose $u$ is on exactly $k$ pendent triangles.
If $\deg_G(u)=2k$, then the entire graph is formed by identifying $k$ triangles at one vertex, which has an {\FII}.
Thus, $\deg_G(u)\ge 2k+1$, and so $\fc u\geq \deg_G(u)-(2k)\cdot\frac{2}{3}-(\deg_G(u)-2k)\cdot\frac{1}{3}=\frac{2\deg_G(u)-2k}{3}\geq \frac{\deg_G(u)+1}{3}$. So $\fc u\geq 3>\frac{8}{3}$ when $\deg_G(u)\geq 8$.
\begin{itemize}
\item[(5-1)]Suppose $\deg_G(u)= 7$.
By \rc{rc-w7}, $u$ is on at most two pendent triangles or $u$ has a neighbor who does not receive charge from $u$, so
$\fc u>\frac{8}{3}$.
\item[(5-2)] Suppose $\deg_G(u)=6$.
If $u$ is on at most one pendent triangles, so $\fc u\geq 6-2\cdot\frac{2}{3}-4\cdot\frac{1}{3}=\frac{10}{3}>\frac{8}{3}$.
If $u$ is on two pendent triangles, then $\fc u\geq 6-4\cdot\frac{2}{3}-2\cdot \frac{1}{3}=\frac{8}{3}$.
\end{itemize}
\end{itemize}

From (1)-(5), we conclude that the final charge of every vertex is at least $\frac{8}{3}$.
If there is a vertex whose final charge is more than $\frac{8}{3}$, then the average charge is more than $\frac{8}{3}$, which is a contradiction.
Hence, every vertex has final charge exactly $\frac{8}{3}$, which further implies the following {\bf[P1]}-{\bf[P5]}.

\begin{enumerate}[{\bf[P1]}]
\item\label{ra-6p}
By (5), there is no $7^+$-vertex.
Moreover, every $6$-vertex $v$ is on exactly two pendent triangles  and has two $W_{235}$-neighbors.
Together with \rcp{rcp-6-two-pendent}, $v$ has two $W_3$-neighbors.

\item\label{ra-5p} By (4), every $V_5$-vertex $v$ is on exactly one pendent triangle and has three $W_{235}$-neighbors.
Moreover, by \rcp{rcp-5-one-pendent}, $v$ has at most one $W_2$-neighbor.

\item\label{ra-4p} By (3), every $V_4$-vertex $v$ has only $W_{235}$-neighbors.
Moreover, by 
\rcp{rcp-4-only2},
$v$ has at most one $W_5$-neighbor and at most one $W_2$-neighbor.

\item\label{ra-4p-2}
By (4), \rc{rc-4p-w2w3w5}, \ra{ra-6p}, \ra{ra-5p}, and \ra{ra-4p}, every $W_4$-vertex $v$ has two $V_3$-neighbors.

\item\label{ra-3p} By (2),
every $V_3$-vertex $v$ has exactly one $W_{23}$-neighbor.
Moreover, by \rc{rc-w5}, \ra{ra-6p}, \ra{ra-5p}, and \ra{ra-4p},, the other two neighbors of $v$ are in $V_3\cup W_4$.
\end{enumerate}
 If $V_3\cup W_4\neq \emptyset$, then \ra{ra-4p-2} and \ra{ra-3p} imply that $G[V_3\cup W_4]$ is 2-regular.
 Yet, this contradicts \rcp{rcp-cycle-3-d4}, so $V_3\cup W_4=\emptyset$.
 Hence, $V(G)=T\sqcup W_{235} \sqcup \mathcal{V}_{4^+}$, where $\mathcal{V}_{4^+}=V_4\cup V_5 \cup V_6$  and $T$ denotes the set of all 2-vertices on pendent triangles of $G$.
 Note that by \ra{ra-6p}, \ra{ra-5p}, and \ra{ra-4p}, $\mathcal{V}_{4^+}$ is an independent set.

By \rc{rc-3-222}, \rc{rc-w3w3}, and \rcp{rcp-cycle-w3-w2},  $G[W_{23}]$ is the union of vertex-disjoint paths.
Let $Z$ be the set of isolated vertices of $G[W_{23}]$ and let $F_0$ be the set of non-isolated vertices of $G[W_{23}]$.
Note that by the definition of $W_3$, $Z\subset W_2$.

In the following, we will reach a  contradiction by finding an {\FII} of $G$.
We partition each of $\mathcal{V}_{4^+}$, $W_5$, and $T$ as in the following (1)-(3).
See Figure~\ref{fig:last:construction} for an illustration.
\begin{figure}[ht]
	\centering
  \includegraphics[scale=0.9,page=4]{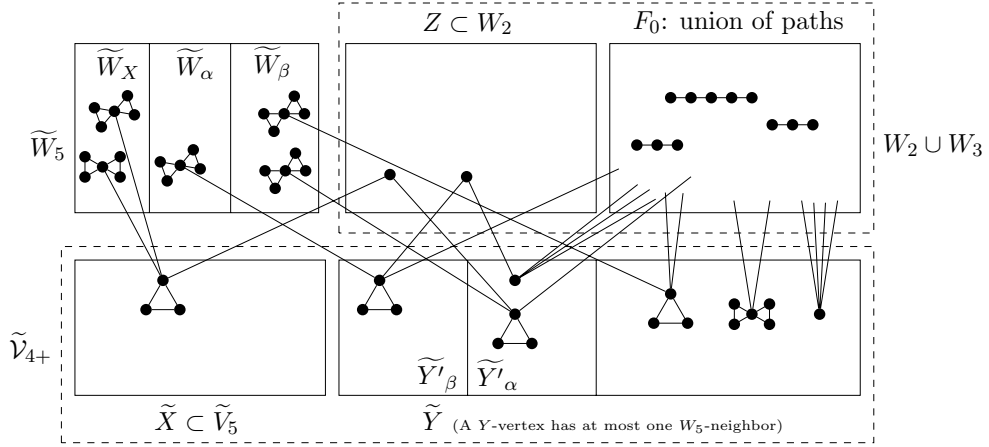}
  \caption{The structure of $G$. For $A\subset V(G)$,  $\widetilde{A}$ means $(N_G(A)\cap T) \cup A.$}
  \label{fig:last:construction}

\end{figure}

\noindent(1)
\underline{Partition $\mathcal{V}_{4^+}$ into $X\sqcup Y_{\alpha}\sqcup Y_{\beta}$.}
Let \begin{eqnarray*}
X&=&\{x\in \mathcal{V}_{4^+}  \mid  x\text{ has two }W_5\text{-neighbors} \},\\
Y&=&\{y\in \mathcal{V}_{4^+}  \mid  y\text{ has at most one }W_5\text{-neighbor} \}.
\end{eqnarray*}
Note that $X\subset V_5$  by \ra{ra-6p}, \ra{ra-5p}, and \ra{ra-4p}.
Let $Y'=\{y\in  Y  \mid  y\text{ has a }Z\text{-neighbor}\}$.
\ra{ra-6p}, \ra{ra-5p}, and \ra{ra-4p} also imply that every $Y'$-vertex has exactly one $Z$-neighbor, and  every $(Y\setminus Y')$-vertex  has only $(F_0\cup W_5)$-neighbors. 
Since $\mathcal{V}_{4^+}$ is an independent set, a $Y'$-vertex has degree one in $G[Y'\cup Z]$.
Since each $Z$-vertex is a 2-vertex, each component of $G[Y'\cup Z]$ is a path of length at most two.
Moreover, each $Y'$-vertex is an endpoint of some nontrivial component of
$G[Y'\cup Z]$, which implies that
each component of $G[Y'\cup Z]$ contains at most two $Y'$-vertices.
Partition $Y'$ into two sets $Y'_{\alpha}$ and $Y'_{\beta}$ so that for every component $C$ of $G[Y'\cup Z]$, $|C\cap Y'_\gamma|\le 1$ for every $\gamma \in\{\alpha,\beta\}$.
Now let $Y_{\alpha}=Y'_\alpha \cup(Y\setminus Y')$ and  $Y_{\beta}=Y'_\beta$ so that $X\sqcup Y_{\alpha}\sqcup Y_{\beta}$ is a partition of $\mathcal{V}_{4^+}$.

\noindent(2)
\underline{Partition $W_5$ into $W_X\sqcup W_{\alpha}\sqcup W_{\beta}$.} The following three sets partition $W_5$, since each $W_5$-vertex has exactly one neighbor in $X\cup Y$:
\begin{eqnarray*}
W_X&=&\{w\in W_5\mid w \text{ has an }X\text{-neighbor}\},\\
W_{\alpha}&=&\{ w\in W_5\mid w  \text{ has a }Y_\beta\text{-neighbor}\},\\
W_{\beta}&=&\{ w\in W_5\mid w  \text{ has a }Y_\alpha\text{-neighbor}\}.
\end{eqnarray*}

\noindent(3)
\underline{Partition $T$ into $T_X\sqcup T_{\alpha}\sqcup T_{\beta}$.}
Choose a 2-vertex from each pendent triangle at a $(W_X\cup X)$-vertex, and partition the chosen 2-vertices into two sets $T_{\alpha}$ and $T_\beta$ so that each of $T_\alpha$ and $T_\beta$ is a 2-independent set.
This is possible since a 2-vertex on a pendent triangle at an $X$-vertex and a 2-vertex on a pendent triangle at a $W_X$-vertex have  distance at least three.
Let $T_X=T\setminus(T_\alpha\cup T_\beta)$.

\medskip
We will now show that \fii{F}{I_\alpha}{I_\beta} is an {\FII} of $G$, where
\[F= W_{23} \cup X \cup W_X \cup T_X, \qquad
I_{\alpha}=Y_\alpha\cup W_\alpha\cup T_\alpha, \qquad
I_\beta=Y_\beta\cup W_\beta\cup T_\beta.\]
Consider $F$.
Since $W_{23}$ is the disjoint union of paths and $X$ has at most one $W_{23}$-neighbor, $W_{23}\cup X$ induces a forest.
Moreover, each pendent triangle containing a $T_X$-vertex also contains a vertex not in $F$.
Hence, $F$ induces a forest.

Suppose that there are two vertices $u,v\in I_\alpha\cup I_\beta$ with distance at most two.
By the definition of  $T_{\alpha}$ and $T_{\beta}$, if $u\in T_{\alpha}$ (resp. $T_\beta$), then $v\not\in I_{\alpha}$ (resp. $v\not\in I_\beta$).
Suppose that $u,v \in Y\cup W_{\alpha}\cup W_{\beta}$.
Since $u$ and $v$ have distance at most two,
at least one of $u$ and $v$ are in $Y$.
Suppose that $u,v\in Y$.
Since every vertex in $F_0\cup W_5$ has at most one $\mathcal{V}_{4^+}$-neighbor, it follows that $u,v\in Y'$ and they have a common $Z$-neighbor, which implies $u,v$ are in the same component of $G[Y\cup Z]$.
By way of construction, either $u\in Y'_{\alpha}$ and $v\in Y'_\beta$, or  $v\in Y'_{\alpha}$ and $u\in Y'_\beta$.
Lastly, suppose $u\in W_{\alpha}\cup W_{\beta}$ and $v\in Y$. Since $u$ and $v$ have distance at most two, $u$ and $v$ are adjacent. By the definition of $W_{\alpha}$ and $W_{\beta}$, $u$ and $v$ are not in the same $I_\gamma$ for some $\gamma\in \{\alpha, \beta\}$.

Hence, we have shown that \fii{F}{I_\alpha}{I_\beta} is an {\FII} of $G$, which is the final contradiction.

\section{Reducible Configurations [C1]-[C10]}\label{sec:proof:configuration}

In this section, we prove that  \rc{rc-1vx}-\rc{rc-w7} cannot exist in $G$.
Recall that the induction is on
(1) $|V^*(G)|$, the number of vertices of $G$ except the $2$-vertices on pendent cycles, and (2) $|V(G)|$, the number of vertices of $G$.

In all lemmas and claims, we often end up with an {\FII} of $G$, which is a contradiction.

\begin{lemma}\label{lem:rc-1}
In $G$, there is no $1^{-}$-vertex. \rc{rc-1vx}
\end{lemma}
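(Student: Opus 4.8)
The plan is to argue by minimality of the counterexample $G$. Suppose for contradiction that $G$ contains a $1^-$-vertex $v$; that is, $\deg_G(v)\le 1$.

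First I would handle the degenerate case $\deg_G(v)=0$: then $G-v$ has $\mad(G-v)\le\mad(G)\le\frac83$, so by minimality $G-v$ has an {\FII} \fii{F}{I_\alpha}{I_\beta}, and we may simply place $v$ into $I_\alpha$ (an isolated vertex is at distance $\infty$ from everything, so $I_\alpha+v$ is still a 2-independent set), contradicting the choice of $G$. Actually it is cleaner to note a $0$-vertex also cannot exist because we could just as well delete it; but the real content is the pendant case.

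So assume $\deg_G(v)=1$ with unique neighbor $u$. Let $G'=G-v$. Deleting $v$ removes one vertex and one edge, so $\rho_{G'}(A)\ge\rho_G(A)\ge0$ for every $A\subset V(G')$, hence $\mad(G')\le\frac83$; moreover $|V^*(G')|\le|V^*(G)|$ and $|V(G')|<|V(G)|$, so by minimality $G'$ has an {\FII} \fii{F'}{I'_\alpha}{I'_\beta}. It remains to extend this partition to $v$. If $u\in I'_\alpha$, put $v\in F'$ (as a leaf attached to $F'$; since $u$ is the only neighbor of $v$, adding $v$ keeps $F'+v$ a forest), and symmetrically if $u\in I'_\beta$. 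If $u\in F'$, then at least one of $I'_\alpha,I'_\beta$ contains no neighbor of $v$ other than constraints at distance $2$ — more carefully, we add $v$ to whichever of $I'_\alpha$ or $I'_\beta$ keeps it $2$-independent: the only vertices within distance $2$ of $v$ in $G$ are $u$ and the neighbors of $u$, and $u\in F'$, so we need a part $I'_\gamma$ that contains none of the neighbors of $u$; if both $I'_\alpha$ and $I'_\beta$ contained a neighbor of $u$, those two neighbors would be at distance $2$ from each other, which is fine for them individually, so instead the clean argument is: place $v$ in $F'$ regardless when $u\in F'$ is false — let me restate. When $u\in F'$: adding $v$ to $F'$ would create the path issue only if $u$ had another $F'$-neighbor $w$ with $v,u,w$ forming part of a path of length three in the coloring, but since $v$ is a leaf this is controlled; the safe choice is to put $v\in I'_\alpha$ if $I'_\alpha$ has no vertex at distance $\le 2$ from $v$, else $v\in I'_\beta$; these cannot both fail because that would force two vertices of $I'_\alpha\cup I'_\beta$ within distance $2$ of $v$, hence within distance $4$ of each other — this does not immediately contradict $2$-independence, so actually one uses that at most one neighbor of $u$ lies in $I'_\alpha$ and at most one in $I'_\beta$ and if both occur we instead add $v$ to $F'$: since $v$ is a leaf of $F'+v$, $F'+v$ is still a forest.

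The main obstacle is just the bookkeeping of which part to add the leaf $v$ to so that the three defining properties (forest, two 2-independent sets) are all preserved; this is entirely routine once one observes that a leaf can always be absorbed into $F$ (keeping a forest) unless its neighbor $u$ lies in an independent part, in which case $v$ goes to $F$ as well if $u\in F$, or to the opposite independent part if $u\in I'_\gamma$ — in every case an extension exists, contradicting minimality of $G$. I expect the write-up to be a short case analysis on the location of $u$ in $\{F',I'_\alpha,I'_\beta\}$, each case one line.
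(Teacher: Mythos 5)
Your argument is correct and is essentially the paper's proof: delete the $1^-$-vertex, invoke minimality to get an {\FII} of the smaller graph, and extend it. The only thing you miss is the observation that collapses your entire case analysis: a vertex of degree at most one can \emph{always} be added to $F$, since attaching a leaf (or an isolated vertex) to a forest yields a forest and the sets $I_\alpha,I_\beta$ are untouched -- this is the paper's one-line proof, and your worries about paths of length three are irrelevant because an {\FII} only requires $G[F]$ to be a forest. One micro-claim in your closing summary, that when $u\in I'_\gamma$ you may place $v$ in the opposite independent part, is not always valid (a neighbor of $u$ could lie in that part, putting it at distance two from $v$), but this is harmless since placing $v$ in $F'$, which you also propose for that case, always works.
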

\begin{proof}
If $G$ has a $1^{-}$-vertex $x$, then by the minimality of $G$, $G-x$ has an {\FII} \fii{F}{I_{\alpha}}{I_{\beta}}, which implies that
\fii{(F+x)}{I_{\alpha}}{I_{\beta}} is an {\FII} of $G$.
\end{proof}

\begin{lemma}\label{lem:rc-3}
In $G$, there is no $3$-vertex that either has only $2$-neighbors or is on a pendent triangle.
 \rc{rc-3-222}, \rc{rc-3p}
\end{lemma}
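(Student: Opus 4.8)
The plan is to treat the two configurations separately, first \rc{rc-3p} (a $3$-vertex on a pendent triangle) and then \rc{rc-3-222} (a $3$-vertex with only $2$-neighbors); in each case I would delete a small set of vertices to reach a graph that is strictly smaller in the order $\bigl(|V^*(\cdot)|,|V(\cdot)|\bigr)$, apply the minimality of $G$ to obtain an \FII\ of that smaller graph, and extend it back over the deleted vertices. The guiding principle is that low-degree vertices are flexible: an isolated or $1$-vertex can always be moved into $F$ (it joins $F$ as an isolated vertex or a leaf, so the forest condition survives), while a $2$-vertex sees only a bounded set of already-colored vertices, so a color class avoiding them usually exists.

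For \rc{rc-3p}, let $u$ be a $3$-vertex on a pendent triangle $uab$, with $N_G(u)=\{a,b,v\}$. Deleting $a$ and $b$ makes $u$ a $1$-vertex; since $a$ and $b$ are $2$-vertices on a pendent cycle they do not lie in $V^*$ while $u$ does, and (using \rc{rc-22-c}) no new pendent cycles affect the count, so $G-\{a,b\}$ is strictly smaller in $\bigl(|V^*|,|V|\bigr)$ and hence has an \FII\ \fii{F}{I_\alpha}{I_\beta}. To extend: if $u\in I_\alpha$ (the case $u\in I_\beta$ is symmetric), put $a,b\in F$; the only new edge inside $F$ is $ab$, an isolated edge, so $F$ is still a forest and the independent classes are unchanged. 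If $u\in F$, put $a\in F$ as a leaf at $u$ and put $b$ in whichever of $I_\alpha,I_\beta$ avoids $v$ (possible since $v$ lies in at most one of them); then the distance-$\le2$ neighborhood of $b$ in $G$ is $\{u,a,v\}$, with $u,a\in F$ and $v$ outside $b$'s class, so the partition is an \FII\ of $G$ --- a contradiction.

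For \rc{rc-3-222} we may assume, by the case just settled, that no $3$-vertex lies on a pendent triangle. Let $u$ be a $3$-vertex with $2$-neighbors $a,b,c$, and let $a',b',c'$ be their respective second neighbors. Using \rc{rc-22-c} together with the absence of pendent triangles at $3$-vertices, no two of $a,b,c$ are adjacent and none of them lies on a pendent cycle, so $G-u$ is strictly smaller in $\bigl(|V^*|,|V|\bigr)$ and has an \FII\ \fii{F}{I_\alpha}{I_\beta}. In $G-u$ the vertices $a,b,c$ are $1$-vertices and pairwise non-adjacent, so I move all three into $F$: each joins $F$ as a leaf at its second neighbor (or as an isolated vertex), so $F$ stays a forest. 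It remains to place $u$, and in $G$ the only vertices within distance $2$ of $u$ are $a,b,c$ (now in $F$) and $a',b',c'$. Thus $u$ can be put in $I_\alpha$ unless $I_\alpha$ meets $\{a',b',c'\}$; in $I_\beta$ unless $I_\beta$ meets $\{a',b',c'\}$; and in $F$ unless two of $a,b,c$ --- say $a$ and $b$ --- lie in a common tree of $F$, which, since $a'$ (resp.\ $b'$) is the only possible $F$-neighbor of $a$ (resp.\ $b$), forces $a',b'\in F$. If all three placements were impossible, then $a',b'\in F$, so $c'$ is the only member of $\{a',b',c'\}$ that could lie outside $F$; but $I_\alpha$ and $I_\beta$ would each contain a member of $\{a',b',c'\}$ lying outside $F$, forcing $c'$ into both $I_\alpha$ and $I_\beta$ --- impossible. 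Hence $u$ can be placed, giving an \FII\ of $G$ --- a contradiction.

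The only delicate point is the $2$-independence check when the deleted $2$-vertices are reinserted, since their second neighbors are already committed to color classes; pushing every reinserted low-degree vertex into $F$ first, together with the short counting argument on $\{a',b',c'\}$, is what makes this go through. The remaining work --- confirming each reduced graph is genuinely smaller in $\bigl(|V^*|,|V|\bigr)$ --- is routine, needing only \rc{rc-22-c} (and, for \rc{rc-3-222}, the already-proved \rc{rc-3p}) to exclude stray pendent cycles through the removed $2$-vertices.
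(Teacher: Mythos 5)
Your proof is correct, and in spirit it mirrors the paper's: delete a small set, invoke minimality, push the $2$-vertex neighbors into $F$, then case-analyze where the $3$-vertex goes. For \rc{rc-3p} your argument (delete the two triangle $2$-vertices, then distinguish $u\in I_\alpha\cup I_\beta$ from $u\in F$) is essentially identical to the paper's. For \rc{rc-3-222} there is a mild divergence: the paper deletes $S=\{v,v_1,v_2,v_3\}$ and re-inserts all four, checking that after adding $v_1,v_2,v_3$ to $F$ at least one of $v\in I_\alpha$, $v\in I_\beta$, $v\in F$ works; you delete only $u$, \emph{move} $a,b,c$ into $F$ (which only shrinks the independent classes and adds leaves, so is harmless), and then place $u$, which forces you to additionally handle the sub-case where $F+u$ would create a cycle because two of $a,b,c$ already lie in a common tree. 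Your short counting argument on $\{a',b',c'\}$ handles that correctly. The one noteworthy point is your invocation of \rc{rc-22-c} to control $|V^*|$ when deleting only $u$: since $a,b,c$ could a priori lie on longer pendent cycles, you need \rc{rc-22-c} (all pendent cycles are triangles) together with the already-proved \rc{rc-3p} to conclude $a,b,c\in V^*(G)$ and hence that $G-u$ is lexicographically smaller. This is fine because the paper's proof of \rc{rc-22-c} (Lemma~\ref{lem:rc-22-c}) does not depend on \rc{rc-3-222} or \rc{rc-3p}, so no circularity results; but you should flag that dependency ordering explicitly, since the paper happens to prove \rc{rc-22-c} \emph{after} this lemma. (For the pendent-triangle case your appeal to \rc{rc-22-c} is actually unnecessary — removing the two triangle $2$-vertices leaves $|V^*|$ exactly unchanged — but it does no harm.)
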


\begin{proof}
Let $v_1$, $v_2$, $v_3$ be the neighbors of a 3-vertex $v$.
Suppose to the contrary that $v$ has only $2$-neighbors.
Let for each $i\in [3]$, let $z_i$ be the neighbor of $v_i$ other than $v$.
Note that $z_i=z_j$ for some $i\neq j$ is possible, but it does not affect the following argument.
Let $S=\{v, v_1, v_2, v_3\}$ and $H=G-S$.
By the minimality of $G$, $H$ has an {\FII}  \fii{F}{I_{\alpha}}{I_{\beta}}.
If neither   \fii{(F+S-v)}{(I_\alpha+v)}{I_\beta} nor \fii{(F+S-v)}{I_\alpha}{(I_\beta+v)} is an {\FII} of $G$,
then $z_i\in I_\alpha$ and $z_j\in I_\beta$ for some $i,j\in[3]$.
Now, \fii{(F+S)}{ I_\alpha }{I_\beta} is an {\FII} of $G$.

Suppose to the contrary that $v$ is on a pendent triangle $vv_1v_2$.
Let $S=\{v_1,v_2\}$  and $H=G-S$.
By the minimality of $G$, $H$ has an {\FII} \fii{F}{I_{\alpha}}{I_{\beta}}.
If $v\in I_{\alpha}\cup I_{\beta}$, then \fii{(F+v_1v_2)}{ I_{\alpha} }{ I_{\beta}} is an {\FII} of $G$.
So assume  $v\in F$, and now either \fii{(F+v_1) }{ (I_{\alpha}+v_2 )}{ I_{\beta}} or
\fii{(F+v_1) }{ I_{\alpha} }{(I_{\beta}+v_2)} is an {\FII} of $G$.
\end{proof}

\begin{lemma}\label{lem:rc-cycle-3-d4-same1}
In $G$, the  following statements hold:
\begin{itemize}
\item[\rm (i)] There is no triangle $x_1x_2x_3$ such that $x_3\in W_2$, $x_1,x_2$ are $3$-vertices, and for each $i\in [2]$, the neighbor of $x_i$ other than $x_1$, $x_2$ is  either a $3^-$-vertex or a $W_4$-vertex.
\item[\rm (ii)] There is no $4$-cycle $x_1x_2x_3x_4$ such that $x_2\in W_2$, $x_1,x_3\in W_3$, and $x_4$ is a $3^-$-vertex.
\end{itemize}
\end{lemma}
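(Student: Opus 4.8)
The plan is the usual reducible‑configuration argument built on the minimality of $G$: assume the configuration in question occurs, delete a small vertex set to obtain a graph $H$ that is strictly smaller than $G$ in the order $(|V^*|,|V|)$ (so $\mad(H)\le\mad(G)\le\frac83$ is automatic), use minimality to get an {\FII} $F\sqcup I_\alpha\sqcup I_\beta$ of $H$, and extend it to an {\FII} of $G$ --- contradicting the choice of $G$.

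\textbf{Statement (i).} Write $y_i$ for the neighbor of $x_i$ off the triangle ($i\in[2]$). First I narrow down the $y_i$. If $y_1$ (say) is a $2$-vertex, then $x_1$ has the two $2$-neighbors $x_3,y_1$, so $x_1\in W_3$; then either $y_2$ is a $2$-vertex, making $x_2\in W_3$ and $x_1x_2$ an edge between $W_3$-vertices (contradicting \rc{rc-w3w3}), or $y_2$ is a $3$-vertex or a $W_4$-vertex, making $x_2$ a $3$-vertex with the $2$-neighbor $x_3$ and the $W_3$-neighbor $x_1$ (contradicting \rc{rc-3-w3''}). So each $y_i$ is a $3$-vertex or a $W_4$-vertex, and the same reasoning (with $x_i$ a $3$-vertex having $2$-neighbor $x_3$) forces $y_i\notin W_3$. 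Now delete only $x_3$: in $H=G-x_3$ the vertices $x_1,x_2$ have degree $2$, so $H$ is smaller than $G$, and I take an {\FII} of $H$. Since the distance-$2$ ball of $x_3$ in $G$ is exactly $\{x_1,x_2,y_1,y_2\}$, putting $x_3\in I_\gamma$ is legal as soon as $x_1,x_2,y_1,y_2\notin I_\gamma$. If $x_1$ or $x_2$ lies in an independent set, I put $x_3\in F$ (then not both $x_1,x_2$ lie in $F$, so no triangle appears in $F$); if $x_1,x_2\in F$ and some $\gamma$ has $y_1,y_2\notin I_\gamma$, I put $x_3\in I_\gamma$. The only obstruction is $x_1,x_2\in F$ with $y_1\in I_\alpha$, $y_2\in I_\beta$, $y_1\ne y_2$; here the {\FII} of $H$ must be modified near $y_1$ (or $y_2$). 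In that case $x_1,x_2$ form an isolated edge of $F$, and $y_1\ne y_2$ puts the neighbors of $y_1$ other than $x_1$ outside that edge, so recoloring $y_1$ into $F$ fails only when its other neighbors lie in one common tree of $F$; I would clear this last obstruction by a short exchange along that tree (using $y_1\notin W_3$, hence $y_1$ has a $3^+$-neighbor), or, when $y_1\in W_4$, by additionally deleting the pendent triangle at $y_1$ (making $y_1$ low degree) and reattaching it at the end --- attaching a pendent triangle drops the potential by at most $1$, so minimality still applies.

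\textbf{Statement (ii).} I expect this to reduce to earlier configurations without any surgery. Let $w_1,w_3$ be the neighbors of $x_1,x_3$ off the $4$-cycle. Since $x_1\in W_3$ has the $2$-neighbor $x_2$, one of $\{x_4,w_1\}$ is its second $2$-neighbor, and similarly for $x_3$. If $x_4$ is a $2$-vertex, then $x_4\in W_2$ (since $x_1,x_3$ are $3$-vertices, no triangle through $x_4$ is pendent), so $x_1x_2x_3x_4$ is a cycle all of whose vertices lie in $W_{23}$, contradicting \rcp{rcp-cycle-w3-w2}. Otherwise $x_4$ is a $3$-vertex (it is not a $1^-$-vertex, by \rc{rc-1vx}), so $w_1,w_3$ are the second $2$-neighbors of $x_1,x_3$ and $x_4$ is a $3$-vertex with the two $W_3$-neighbors $x_1,x_3$, contradicting \rc{rc-3-w3'}. (Should \rcp{rcp-cycle-w3-w2} not be available at this point, the case $x_4\in W_2$ can be handled instead by deleting $\{x_1,x_2,x_3,x_4\}$ --- or, more smoothly, by replacing the two length-$2$ paths between $x_1$ and $x_3$ by a single one --- and extending as in (i).)

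\textbf{Main obstacle.} The step I expect to be the crux is the last recoloring in (i): showing that whenever the induced {\FII} of $H$ has $y_1\in I_\alpha$ and $y_2\in I_\beta$, one can re-assign a bounded neighborhood of $y_1$ (or $y_2$) so as to free a legal placement of $x_3$ without creating a cycle in $F$ or destroying $2$-independence. The hypothesis that each $y_i$ is a $3^-$- or $W_4$-vertex is exactly what makes this possible: the bounded degree of $y_1$ leaves only one obstruction to sliding it into $F$ --- its two remaining neighbors lying in a common tree --- which a short exchange along that tree removes, and a $W_4$-vertex is handled by detaching and later reattaching its pendent triangle; the degenerate subcases where even this fails should, on inspection, reproduce one of \rc{rc-w3w3}, \rc{rc-3-w3''}, \rc{rc-3-w3'}, or another already-established configuration.
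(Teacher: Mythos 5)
The proposal has a fundamental circularity problem. You invoke \rc{rc-w3w3}, \rc{rc-3-w3''}, \rc{rc-3-w3'}, and \rcp{rcp-cycle-w3-w2} to narrow the cases, but this lemma (Lemma~\ref{lem:rc-cycle-3-d4-same1}) is established in the paper \emph{before} all of those. Worse, the paper's proof of Lemma~\ref{lem:rc-w3w3} (which gives \rc{rc-w3w3}) explicitly cites part~(i) of this lemma to conclude that the four pendent $2$-neighbors are distinct, and the proof of Lemma~\ref{lem:rc-w3'} (which gives \rc{rc-3-w3'}) cites part~(ii) for the same purpose. So your ``narrowing'' step in~(i) and the entire case analysis in~(ii) depend on facts whose proofs in turn depend on what you are trying to prove.

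Even setting aside the ordering issue, the core of your argument for~(i) is not actually carried out. You correctly identify the crux --- the induced {\FII} of $H=G-x_3$ might put $x_1,x_2\in F$, $y_1\in I_\alpha$, $y_2\in I_\beta$ --- but ``a short exchange along that tree'' is not a proof. Moving $y_1$ from $I_\alpha$ into $F$ can create a bicolored $P_4$ or a cycle in $F$ arbitrarily far away, and the hypothesis that $y_1$ is a $3$-vertex or a $W_4$-vertex does not, by itself, localize the damage. The paper sidesteps this obstruction entirely and differently: it deletes all of $\{x_1,x_2,x_3\}$, uses $\rho^*_H(z_1)+\rho^*_H(z_2)\ge 3$ to get $\rho^*_H(z_i)\ge 2$ for some $i$, attaches $J_2$ at that $z_i$ to form $H'$ with $\mad(H')\le\frac{8}{3}$ and $|V^*(H')|<|V^*(G)|$, and then Lemma~\ref{lema:2B2}(ii) \emph{forces} $z_i\in F$ in the inherited {\FII} --- so the troublesome case never arises. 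For~(ii), the paper again makes no appeal to later configurations: it deletes $\{x_1,x_2,x_3,x_4,v_1,v_3\}$ (including the off-cycle $2$-neighbors of $x_1,x_3$) and extends by a short case analysis. Your fallback suggestions (``deleting $\{x_1,x_2,x_3,x_4\}$'' or ``replacing the two length-$2$ paths by a single one'') are not developed, and the second is not obviously potential-safe. In short, the potential-function-plus-gadget technique is the missing idea; without it, the extension step has a genuine gap, and without reordering your citations, the overall structure is circular.
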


\begin{proof} Suppose to the contrary that such a cycle exists.
 We use the labels as in  Figure~\ref{fig:sc2:lem1}.
\begin{figure}[ht]
	\centering
  \includegraphics[scale=0.75,page=5]{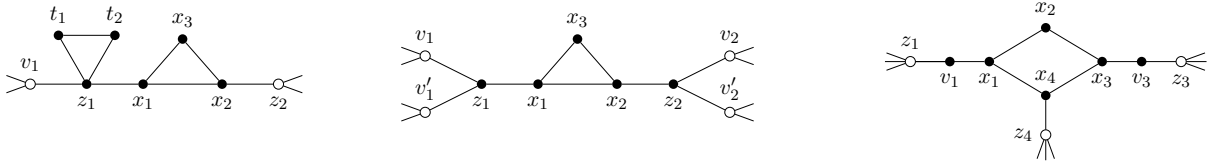} 
\caption{An illustration for Lemma~\ref{lem:rc-cycle-3-d4-same1}}
  \label{fig:sc2:lem1}

\end{figure}

\smallskip

\noindent (i) Let $S=\{x_1,x_2,x_3\}$ and $H=G-S$.
Since $\rho^*_H(z_1)+\rho^*_H(z_2)\ge \rho^*_H(z_1z_2)\ge =-4\cdot 3+3\cdot 5=3$ by~\eqref{eqref:basic:rho} and \eqref{eqref:basic:rho2}, without loss of generality assume $\rho^*_H(z_1)\ge 2$.
Then $\mad(H')\le \frac{8}{3}$, where  $H'$ is the graph obtained from $H$ by attaching $J_2$ to $z_1$. Since $|V^*(H')|<|V^*(G)|$,
by the minimality of $G$, $H'$ has an {\FII} \fii{F'}{I_{\alpha}'}{I_\beta'}.
Let $F=F'\cap V(H)$ $I_\alpha=I_\alpha'\cap V(H)$, and $I_\beta=I_\beta'\cap V(H)$.
By Lemma~\ref{lema:2B2} (ii),  $z_1\in F$.
If $z_2\not\in F$, then either \fii{(F+x_1x_2)}{I_\alpha}{(I_\beta+x_3)} or
\fii{(F+x_1x_2)}{(I_\alpha+x_3)}{I_\beta} is an {\FII} of $G$.
Thus, $\{z_1,z_2\}\subset F$.
If $z_1\in W_4$, then
we may assume that $t_2\in I_\alpha$, and then either \fii{(F+x_2x_3)}{(I_\alpha+x_1)}{I_\beta} or
\fii{(F+x_1x_2)}{(I_\alpha+x_3)}{I_\beta}
is an {\FII} of $G$.
If $z_1$ is a $3$-vertex, then either \fii{(F+x_1x_2)}{(I_\alpha+x_3)}{I_\beta},
\fii{(F+x_2x_3)}{I_\alpha}{(I_\beta+x_1)}, or  \fii{(F+x_2x_3)}{(I_\alpha+x_1)}{I_\beta} is an {\FII} of $G$.

\smallskip

\noindent (ii) Let $S=\{x_1,x_2,x_3,x_4,v_1,v_3\}$.
By the minimality of $G$, there is an {\FII} \fii{F}{I_\alpha}{I_\beta} of $G-S$.
If $z_1\in F$, then
\fii{(F+S-x_1x_3)}{(I_\alpha+x_1)}{(I_\beta+x_3)},
\fii{(F+S-x_1)}{(I_\alpha+x_1)}{I_\beta},
or \fii{(F+S-x_1)}{I_\alpha}{(I_\beta+x_1)} is an {\FII} of $G$.
Thus, $z_1,z_3\not\in F$, and now
\fii{(F+S-x_2)}{(I_\alpha+x_2)}{I_\beta} is an {\FII} of $G$.
\end{proof}

\begin{lemma}\label{lem:rc-w3w3}
In $G$, there are no two adjacent $W_3$-vertices. \rc{rc-w3w3}
\end{lemma}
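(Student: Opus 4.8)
The plan is to argue by contradiction using the minimality of $G$, deleting a small set of vertices around the two adjacent $W_3$-vertices and extending an {\FII} of the smaller graph. Suppose $x$ and $y$ are adjacent $W_3$-vertices. Each of $x,y$ is a $3$-vertex with exactly two $2$-neighbors (by the definition of $W_3$ and \rc{rc-3-222}), so write $x_1,x_2$ for the two $2$-neighbors of $x$ and $y_1,y_2$ for the two $2$-neighbors of $y$; note $x,y$ have no other neighbors. By \rc{rc-22-c} and \rc{rc-3p}, none of these $2$-vertices lies on a pendent triangle, so each $x_i$ has a neighbor $z_i \notin \{x\}$ and each $y_j$ has a neighbor $z_j' \notin \{y\}$, all of which are $3^+$-vertices (again by \rc{rc-22-c}). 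I would set $S=\{x,y,x_1,x_2,y_1,y_2\}$ and $H=G-S$; by the minimality of $G$ (note $|V^*(H)| < |V^*(G)|$ since $x,y \in V^*(G)$ are removed), $H$ has an {\FII} \fii{F}{I_{\alpha}}{I_{\beta}}.

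The extension step is a short case analysis. First decide the status of $x$ and $y$: I want to put both into $F$, keeping the induced graph on $F$ a forest (it picks up only the path $x_1 x x_2$-ish structure — actually the edge $xy$ plus pendant $2$-vertices, which is a tree attached to $F$ at the points $z_1,z_2,z_1',z_2'$, so no cycle is created as long as not too many of those $z$'s coincide; if a cycle would form I instead route one of $x,y$ into an $I_\gamma$). The main content is placing the $2$-vertices $x_1,x_2,y_1,y_2$. Each $x_i$ must avoid $I_\gamma$ only if its outside neighbor $z_i$ is already in that $I_\gamma$; since each $x_i$ has just the two neighbors $x \in F$ and $z_i$, it can go into whichever of $I_\alpha,I_\beta$ does not contain $z_i$ — there is always at least one choice. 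The only genuine obstruction is the distance-two condition among $x_1,x_2,y_1,y_2$ themselves (e.g. $x_1$ and $x_2$ are at distance two through $x$, and $x_i,y_j$ are at distance two through $xy$... wait, distance three — so only the pairs through a common vertex matter): $x_1,x_2$ must land in different classes, and $y_1,y_2$ must land in different classes. That is a $2$-coloring constraint on four vertices with at most the two edges $x_1x_2$-type, easily satisfiable unless some $z_i$ pins two of them the same way. In that pinned case I would fall back to pushing one of $x,y$ (whichever is "downstream" of the conflict) into the pinning $I_\gamma$ — legal because $x$ (resp. $y$) then has only $F$-neighbors among $x_1,x_2,y$, hence no distance-$\le 2$ conflict with anything already in that class except possibly $y_1,y_2$, which we still get to choose.

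I would organize the write-up as: (1) if $x,y$ can both be added to $F$ with the $2$-vertices distributed, done; (2) otherwise identify the forced coincidence among the $z$-vertices and instead place one of $x,y$ into an $I_\gamma$, re-deriving the (now easier) constraints on the remaining $2$-vertices. The step I expect to be the main obstacle is the bookkeeping when several of $z_1,z_2,z_1',z_2'$ coincide or are mutually adjacent, since then both "$F$ stays a forest" and "the $I_\gamma$ classes stay $2$-independent" can fail simultaneously; I would handle this by noting that such coincidences force a very small subgraph (a cycle through $W_{23}$-vertices or similar), which is already excluded — in the worst case one invokes that the whole relevant piece is tiny and exhibit an {\FII} of $G$ directly. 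No potential-function machinery should be needed here since $|S|$ and the degrees are all bounded and we only remove $2$-vertices and $W_3$-vertices.
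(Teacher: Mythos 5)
Your deletion set $S=\{x,y,x_1,x_2,y_1,y_2\}$ and $H=G-S$ match the paper's, but the extension step in your primary plan has a genuine gap. You claim that each $2$-vertex $x_i$ ``can go into whichever of $I_\alpha,I_\beta$ does not contain $z_i$ --- there is always at least one choice.'' This ignores distance-two conflicts \emph{through} $z_i$: placing $x_i$ in $I_\gamma$ also requires that no vertex of $N(z_i)\setminus\{x_i\}$ lies in $I_\gamma$, since such a vertex is at distance two from $x_i$. Because $z_i$ is a $3^+$-vertex, it is entirely possible that $z_i\in F$ while $N(z_i)$ contains one $I_\alpha$-vertex and one $I_\beta$-vertex, blocking $x_i$ from \emph{both} $2$-independent classes. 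Your fallback (``push one of $x,y$ into the pinning $I_\gamma$'') has the same flavor of omission: putting $x\in I_\gamma$ also requires $z_1,z_2\notin I_\gamma$, which you never verify, and the analysis of where $x_1,x_2,y_1,y_2$ then go is not worked out.

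The paper avoids this by running the case analysis the other way around: the four $2$-vertices $x_1,x_2,y_1,y_2$ are \emph{always} placed in $F$ (as leaves hanging off at most one $z_i$ each, so the forest condition is easy to track), and it is $x$ and $y$ --- whose entire $G$-neighborhoods lie inside $S$, so that their only external distance-two neighbors are $z_1,z_2$ and $z_3,z_4$ respectively --- that get moved into the $I$-classes. Concretely: if all $z_i\notin F$, add all of $S$ to $F$; otherwise relabel so $z_1,z_2\notin I_\alpha$, try $x\in I_\alpha$ with the rest of $S$ in $F$, and if that fails deduce $z_3,z_4\in F$ and put $x\in I_\alpha$, $y\in I_\beta$, the four $2$-vertices in $F$. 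The asymmetry --- controlling the small, fully-internal neighborhoods of $x,y$ rather than the potentially large neighborhoods of the $z_i$'s --- is the idea your proposal is missing. Note also that the paper cites Lemma~\ref{lem:rc-cycle-3-d4-same1}(i) (proved \emph{before} this lemma) to guarantee $x_1,x_2,y_1,y_2$ are distinct, rather than appealing to the later \rcp{rcp-cycle-w3-w2} as you suggest; a forward reference there would be a circularity risk.
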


\begin{proof}
Suppose to the contrary that  $x, y\in W_3$ are  adjacent. We use the labels as in the left figure of Figure~\ref{fig:22-c}.
Note that $x_1,x_2,y_1,y_2$ are distinct by Lemma \ref{lem:rc-cycle-3-d4-same1} (i).
It might happen that $z_i=z_j$ for some $i\neq j$, nonetheless the following arguments are still valid.
Let $H=G-S$ where $S=\{x,y,x_1,x_2,y_3,y_4\}$.
By the minimality of $G$,  $H$ has an {\FII} \fii{F}{I_{\alpha}}{I_{\beta}}.
If $z_1, z_2, z_3, z_4\not\in F$, then \fii{(F'+S)}{I_\alpha}{I_\beta} is an {\FII} of $G$.
Suppose that at least one  $z_i$ is in $F$.
Without loss of generality, we may assume $z_1, z_2\not\in I_\alpha$.
Since \fii{(F+S-x)}{(I_\alpha+x)}{I_\beta} is not an {\FII} of $G$,
$z_3, z_4\in F$. Then \fii{(F+S-xy)}{(I_\alpha+x)}{(I_\beta+y)} is an {\FII} of $G$.
\end{proof}

\begin{figure}[ht]
	\centering
  \includegraphics[scale=0.75,page=6]{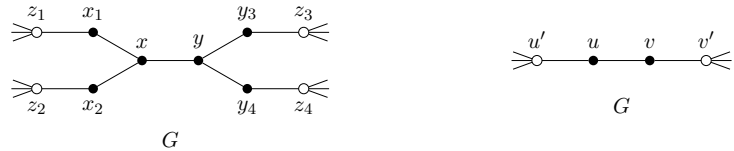}
    \caption{An illustration for Lemmas~\ref{lem:rc-w3w3} and~\ref{lem:rc-22-c}.}
  \label{fig:22-c}
\end{figure}

\begin{lemma}\label{lem:rc-22-c}
In $G$, there are no two adjacent $2$-vertices not on a pendent triangle.
\rc{rc-22-c}
\end{lemma}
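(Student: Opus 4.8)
The plan is to delete the two adjacent $2$-vertices together with their outer neighbors, apply minimality to the smaller graph, and then reinsert the deleted vertices into whichever parts of the {\FII} are consistent with the constraints, using a case analysis on where the outer neighbors landed.

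First I would set up notation as in the right figure of Figure~\ref{fig:22-c}: let $x$ and $y$ be the two adjacent $2$-vertices, neither on a pendent triangle. By \rc{rc-22-c} (which is exactly what we are proving — so instead I should only use the already-established \rc{rc-1vx}) both $x$ and $y$ have degree $2$, so $x$ has a second neighbor $z_1 \neq y$ and $y$ has a second neighbor $z_2 \neq x$; it is possible that $z_1 = z_2$, but the argument below is unaffected by that. Set $S = \{x, y\}$ and $H = G - S$. Since $|V^*(H)| \le |V^*(G)|$ and $|V(H)| < |V(G)|$, the minimality of $G$ gives an {\FII} \fii{F}{I_\alpha}{I_\beta} of $H$.

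Next I would reinsert $x$ and $y$. The natural first attempt is to put both into $F$: \fii{(F + xy)}{I_\alpha}{I_\beta}. The only obstruction is that $F + xy$ might fail to induce a forest, which happens exactly when $z_1, z_2 \in F$ and $z_1 = z_2$ (a $2$-cycle is impossible since $G$ is simple, so the real obstruction is a path $z_1 x y z_2$ closing a cycle, i.e. $z_1$ and $z_2$ already joined in $F$, in particular $z_1 \ne z_2$). In that case I would instead try to move one of $x, y$ to an independent part: \fii{(F + y)}{(I_\alpha + x)}{I_\beta} works unless $z_1$ or a neighbor of $x$ at distance $\le 2$ forces $x$ out of $I_\alpha$; but $x$ has only the two neighbors $y \in F$ and $z_1$, and $x$'s distance-$2$ vertices are $z_2$ and the other neighbors of $z_1$. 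Since $I_\alpha$ is only required to be $2$-independent, the precise condition is: $x$ can join $I_\gamma$ iff no vertex of $I_\gamma$ lies within distance $2$ of $x$ in $G$. I would check that at least one of the four options \fii{(F + y)}{(I_\alpha + x)}{I_\beta}, \fii{(F + y)}{I_\alpha}{(I_\beta + x)}, \fii{(F + x)}{(I_\alpha + y)}{I_\beta}, \fii{(F + x)}{I_\alpha}{(I_\beta + y)} must succeed, or else fall back to placing one of them in $F$ and the other in an $I_\gamma$, yielding in every case an {\FII} of $G$ and hence the desired contradiction.

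The main obstacle I anticipate is the bookkeeping in the case $z_1 = z_2 =: z$ with $z \in F$: then $x$ and $y$ are at distance $2$ from each other through $z$ and also each at distance $1$ from $z$, so they cannot both go into the same $I_\gamma$, and $F + xy$ creates the multi-edge-like triangle $zxy$ — wait, that is a genuine triangle, so $F + xy$ is not a forest. In this sub-case I must use that $x$ (and $y$) has no other neighbor, so $x$ can be placed in $I_\alpha$ provided $I_\alpha$ contains no neighbor of $z$ within distance $1$ of $x$, i.e. provided $z \notin I_\alpha$ (automatic, as $z \in F$) and no other neighbor of $z$ is in $I_\alpha$ — but that need not hold. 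The clean resolution is to instead delete $S = \{x, y, z\}$ (or $\{x,y,z_1,z_2\}$ when $z_1 \ne z_2$) from the start, exactly as Lemma~\ref{lem:rc-3} does, so that $z$ is also free to be recolored; then a short case check on whether the neighbors of $z$ outside $S$ occupy $I_\alpha$, $I_\beta$, or $F$ lets one place $z$ first and then $x, y$. I expect the write-up to mirror the structure of the proof of Lemma~\ref{lem:rc-w3w3}: delete a small connected piece, take the {\FII} of the remainder, and exhibit a finite menu of extensions one of which must work.
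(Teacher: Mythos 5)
Your proposal correctly identifies the bottleneck but does not resolve it, and the paper's actual proof uses an idea you did not anticipate. Let me first clear a red herring: the case $z_1 = z_2 =: z$ cannot occur, because then $xyz$ is a triangle on which $x$ and $y$ are $2$-vertices, so if $z$ is a $3^+$-vertex then $x,y$ lie on a pendent triangle (contradicting the hypothesis), and if $z$ is a $2$-vertex the component is $K_3$, which trivially has an {\FII}. So we always have $z_1 \ne z_2$.

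The genuine gap is in the case $z_1, z_2 \in F$: your four fallback options $(F+y)\sqcup(I_\alpha+x)\sqcup I_\beta$, etc., can \emph{all} fail simultaneously, namely when $z_1$ has both an $I_\alpha$-neighbor and an $I_\beta$-neighbor and $z_2$ likewise, while $z_1, z_2$ lie in the same tree of $G[F]$. Your proposed escape --- delete $\{x,y,z_1,z_2\}$ and ``place $z$ first'' --- is not carried out and does not obviously work: $z_1, z_2$ may have high degree, so reinserting them meets exactly the same kind of obstruction (a cycle in $F$, or conflicts in both independent parts), and nothing forces a successful placement. Indeed, your guess that ``the write-up mirrors Lemma~\ref{lem:rc-w3w3}'' is incorrect: that lemma deletes a set around whose boundary only $2$-vertices occur, which makes reinsertion easy; here the boundary vertices $z_1, z_2$ can be arbitrary.

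The paper instead uses the potential function and a controlled \emph{modification} rather than further deletion. Set $H = G - \{u,v\}$. By~\eqref{eqref:basic:rho} and~\eqref{eqref:basic:rho2}, $\rho^*_H(u') + \rho^*_H(v') \geq 1$, so WLOG $\rho^*_H(u') \geq 1$. Attach a pendent triangle $u'xy$ to $u'$ to form $H'$; the potential bound guarantees $\mad(H') \leq \tfrac83$, and the new vertices being $2$-vertices on a pendent cycle while $u,v$ were in $V^*(G)$ gives $|V^*(H')| < |V^*(G)|$, so minimality applies. Restrict the resulting {\FII} to $H$. If $u' \notin F$ or $v' \notin F$, then $(F+u+v)\sqcup I_\alpha\sqcup I_\beta$ works. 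If $u', v' \in F$, then one triangle vertex, say $x$, must lie in $I_\alpha$; since $I_\alpha$ is $2$-independent in $H'$, no $H$-neighbor of $u'$ can be in $I_\alpha$, so $u$ can safely be placed in $I_\alpha$ and $v$ in $F$. This ``attach a gadget to force an independent part to be clear near $u'$'' step is the key mechanism your argument is missing.
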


\begin{proof}
Let $u$ and $v$ be adjacent 2-vertices not on a pendent triangle, and use the labels as in the right figure of Figure~\ref{fig:22-c}.
Let $H=G-\{u,v\}$.
Since $\rho^*_H(u')+ \rho^*_H(v') \ge 1$
by~\eqref{eqref:basic:rho} and \eqref{eqref:basic:rho2}, without loss of generality, we may assume $\rho^*_H(u')\ge 1$.
Now consider the graph $H'$ obtained by attaching a pendent triangle $u'xy$ to ${u'}$. Then $\mad(H')\leq {8\over 3}$.
Since
$|V^*(H')|<|V^*(G)|$, by the minimality of $G$, ${H'}$ has an {\FII} \fii{F'}{I'_\alpha}{I'_\beta}.
 Let $F=F'\cap V(H)$, $I_\alpha=I'_\alpha\cap V(H)$, and $I_\beta=I'_\beta\cap V(H)$.
If either $u'\not\in F$ or $v'\not\in F$, then \fii{(F+S)}{I_{\alpha}}{I_{\beta}} is an {\FII} of $G$.
If $u',v'\in F$, then without loss of generality we may assume $x\in I_{\alpha}$.
Now, \fii{(F+v)}{(I_{\alpha}+u)}{I_{\beta}}  is an {\FII} of $G$.
\end{proof}

\begin{lemma}\label{lem:rc-w5}
In $G$, there is no $W_5$-vertex with either a $3$-neighbor or a $W_{25}$-neighbor. \rc{rc-w5}
\end{lemma}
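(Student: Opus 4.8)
## Proof proposal for Lemma~\ref{lem:rc-w5}

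The plan is to mimic the structure of the preceding reducibility lemmas: delete a small set $S$ around the offending $W_5$-vertex, restore $\mad \le \frac{8}{3}$ if necessary by attaching one of the gadgets $J_1$, $J_2$, or a pendent triangle (using the potential bounds \eqref{eqref:basic:rho}, \eqref{eqref:basic:rho2} and the ``attaching decreases potential by at most $1$, $1$, $2$'' principle from Section~1), invoke minimality of $G$ to get an {\FII} of the smaller graph, and then extend it back to $G$ by a short case analysis on how the neighbors of $S$ outside $S$ are colored. Let $v$ be the $W_5$-vertex, lying on two pendent triangles $vab$ and $vcd$, with fifth neighbor $u$; the hypothesis is that $u$ is either a $3$-vertex or a $W_{25}$-vertex. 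I would set $S = \{v,a,b,c,d\}$ together with the neighbors of $u$ other than $v$ when that helps (as in Lemma~\ref{lem:rc-w3w3}), so that $G - S$ genuinely has fewer vertices in $V^*$.

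The key steps, in order: (1) Compute $\rho^*_{G-S}$ on the relevant ``boundary'' vertices (the neighbors of $u$ other than $v$, or $u$ itself if $u$ survives) via \eqref{eqref:basic:rho2}, using that $v$ has degree $5$ and $a,b,c,d$ have degree $2$ — deleting $S$ removes many edges, so the potential left on the boundary is comfortably large; split into the subcases ``$u$ is a $3$-vertex'' and ``$u \in W_2$'' and ``$u \in W_5$'', since the boundary structure differs. (2) In each subcase, if needed attach a pendent triangle or a $J_i$ to a boundary vertex whose $\rho^*$ is large enough (so that $\mad \le \frac{8}{3}$ is preserved and $|V^*|$ strictly drops), and apply minimality to obtain an {\FII} $F' \sqcup I'_\alpha \sqcup I'_\beta$; restrict it to $V(G-S)$ to get $F \sqcup I_\alpha \sqcup I_\beta$, using Lemma~\ref{lema:2B2} to pin down where the gadget forces its attachment vertex to lie. (3) Extend to $G$: first color $v$. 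If $v$ can be put into $I_\alpha$ or $I_\beta$ (i.e.\ $u$ is not already in that class and $u$ is not forced to create a distance-$\le 2$ conflict), do so and then each pendent triangle $\{v,a,b\}$, $\{v,c,d\}$ contributes its two degree-$2$ vertices split one into $F$ and one appropriately; if $v$ must go into $F$, then from each pendent triangle one of $a,b$ (resp.\ one of $c,d$) must go into $I_\alpha \cup I_\beta$, and we choose the assignment — there are only finitely many patterns for $(u \in F$? / $u \in I_\alpha$? / $u \in I_\beta$?$)$ and for each a valid completion exists because $v$ sits on \emph{two} pendent triangles, giving us independent freedom on each triangle's pair.

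The main obstacle I expect is the bookkeeping in step (3) when $u \in W_5$ or $u \in W_2$: here both endpoints of the edge $uv$ are ``poor'' vertices that want to avoid $F$, and after deletion $u$ may itself be part of $S$ (if we chose to delete its private neighbors), so we must re-insert $u$, its pendent-triangle vertices (if $u \in W_5$), \emph{and} $v$'s pendent-triangle vertices simultaneously, checking that the $2$-independence of $I_\alpha$ and $I_\beta$ survives across the edge $uv$ and across the two triangles at each of $u$ and $v$. The resolution is that the degree-$2$ vertices on these pendent triangles are pairwise at distance $\ge 3$ from anything dangerous except their own triangle-mate, so a greedy assignment — put $v$ (and $u$) in $F$, and from each pendent triangle put the two leaves one in $I_\alpha$ and one in $I_\beta$ — always works, forcing the desired contradictory {\FII} of $G$.
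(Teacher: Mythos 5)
Your high-level plan (delete a small set $S$ around the $W_5$-vertex $v$, invoke minimality, and re-insert $v$ and the pendent-triangle leaves) is sound, but the concrete extension step in your last paragraph is wrong, and the machinery in the middle is unnecessary for this particular lemma. Two issues.

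First, the ``greedy assignment --- put $v$ (and $u$) in $F$, and from each pendent triangle put the two leaves one in $I_\alpha$ and one in $I_\beta$ --- always works'' does not work. If $t_1,t_2$ and $t_3,t_4$ are the leaves of the two pendent triangles and you put $t_1,t_3\in I_\alpha$, then $t_1$ and $t_3$ are at distance two via $v$, so $I_\alpha$ fails to be $2$-independent; the same for $I_\beta$. Worse, when $v_1:=u\notin F$ (say $v_1\in I_\alpha$, and you cannot move it to $F$ because both of its $H$-neighbors lie in the same component of $G[F]$), forcing $v\in F$ leaves you stuck: every $t_i$ within distance two of $v_1$ is barred from $I_\alpha$, and putting all four $t_i$ in $I_\beta$ or $F$ creates a conflict or a triangle in $F$. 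The move you are missing is the one the paper makes: in this case put $v$ in the \emph{opposite} class $I_\beta$ and put \emph{all four} $t_i$ in $F$. Then $F\cup\{t_1,t_2,t_3,t_4\}$ just gains two isolated edges $t_1t_2$ and $t_3t_4$ (no cycle, since $v\notin F$), and $I_\beta+v$ is $2$-independent precisely because $v_1\in I_\alpha$ and $v_1$'s $H$-neighbors are in $F$. When instead $v_1\in F$ (or can be moved into $F$), the correct completion is $v,t_1,t_3\in F$, $t_2\in I_\alpha$, $t_4\in I_\beta$, not ``one leaf in each $I_\gamma$ per triangle.''

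Second, you invest a lot of your plan in potential estimates and gadget attachments, but none of it is needed here. Taking $S=\{v,t_1,t_2,t_3,t_4\}$, $H=G-S$ is a subgraph of $G$ so $\mad(H)\le\frac{8}{3}$ automatically, and $|V^*(H)|<|V^*(G)|$ because $v$ itself is in $V^*(G)$ (the $t_i$ are pendent-triangle $2$-vertices, hence never in $V^*$, and deleting $S$ creates no new pendent cycle). So minimality applies directly, no gadgets required. You should also handle the $v_1\in W_5$ subcase separately and more cheaply: if both $v$ and $v_1$ are $W_5$-vertices joined by an edge, the entire graph is a subgraph of $J_2$, which has an {\FII} by inspection, so $G$ does too; the ``$3^-$-vertex'' argument above then covers the remaining subcases $v_1\in W_2$ and $v_1$ a $3$-vertex uniformly.
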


\begin{proof}
For $v\in W_5$, let $vt_1t_2$ and $vt_3t_4$ be the two pendent triangles at $v$, and let $v_1$ be the neighbor of $v$ that is not on a pendent triangle.
Suppose to the contrary that $v_1$ is either a $3$-vertex or a $W_{25}$-vertex.
If $v_1\in W_5$, then the entire graph $G$ is a subgraph of the graph $J_2$ in Figure~\ref{fig:2B_2}.
Yet, $J_2$ has an {\FII}, and therefore $G$ has an {\FII}.

Assume $v_1$ is a $3^-$-vertex.
Let $H=G-\{t_1, t_2, t_3, t_4, v\}$.
By the minimality of $G$, $H$ has an {\FII} \fii{F}{I_{\alpha}}{I_{\beta}}.
If $v_1\in F$, then \fii{(F+v t_1 t_3)}{(I_\alpha+t_2)}{(I_\beta+t_4)} is an {\FII} of $G$.
If $v_1\not\in F$ and we cannot move $v_1$ to $F$, then the neighbors of $v_1$ in $H$ are in $F$. Without of generality assume $v_1\in I_\alpha$.
Now \fii{(F+ t_1t_2t_3t_4)}{I_\alpha}{(I_\beta+v)} is an {\FII} of $G$.
\end{proof}

\begin{lemma}\label{lem:rc-4p}
In $G$, there is no vertex $v\in W_4$ with a $W_{2345}$-neighbor. \rc{rc-4p-w2w3w5}
\end{lemma}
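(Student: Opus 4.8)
The plan is to mimic the deletion strategy used in the earlier reducible-configuration lemmas: delete the $W_4$-vertex $v$ together with the two $2$-vertices of its pendent triangle, apply minimality to the smaller graph, and then try to reinsert the deleted vertices into an {\FII}. Let $v t_1 t_2$ be the pendent triangle at $v$, and let $v_2, v_3$ be the two neighbors of $v$ not on the triangle; by hypothesis one of them, say $v_2$, lies in $W_{2345}$. Set $S=\{v,t_1,t_2\}$ and $H=G-S$. Deleting $S$ removes $v$ and its two $2$-neighbors, so $|V^*(H)|<|V^*(G)|$ (the vertices $t_1,t_2$ are $2$-vertices on a pendent cycle and do not count, but $v$ does), and $\mad(H)\le\frac83$, so by minimality $H$ has an {\FII} \fii{F}{I_\alpha}{I_\beta}. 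The pendent triangle is easy to handle once $v$ is placed: if $v\in F$, put $t_1$ in $F$ and $t_2$ in whichever of $I_\alpha,I_\beta$ does not already contain $t_1$'s… wait, $t_1$ is in $F$, so both $t_1,t_2$ can be absorbed, namely $F+vt_1$ and $t_2$ into $I_\alpha$ (it has no other neighbor). If $v\notin F$, then $t_1,t_2$ both go to $F$.

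So the real task is to decide whether $v$ can be placed, i.e. whether $F+v$, $I_\alpha+v$, or $I_\beta+v$ extends. Adding $v$ to $F$ fails only if it creates a cycle in $F$, which (since $v$'s only non-triangle neighbors are $v_2,v_3$) requires $v_2,v_3\in F$ and already connected in $F$. Adding $v$ to $I_\alpha$ fails only if $v_2$ or $v_3$ is already in $I_\alpha$ (distance condition), and similarly for $I_\beta$. The key case analysis is on the position of $v_2\in W_{2345}$: if $v_2\in W_2$ then after deleting $S$ the vertex $v_2$ has degree $1$ in $H$ (its other neighbor $u'$ survives), giving us extra freedom — indeed $\rho^*_H(u')$ is large because $v_2$ is pendent, so we can attach $J_1$ or $J_2$ at $u'$ as in Lemma~\ref{lem:rc-22-c} and Lemma~\ref{lem:rc-w3w3} to force $v_2\in F$ or control its color; if $v_2\in W_3$ or $W_4$ or $W_5$, the structure around $v_2$ (its pendent triangle(s) or its $2$-neighbors) similarly constrains where $v_2$ can lie in the {\FII} via Lemma~\ref{lema:2B2}. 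In each subcase one shows that at least one of the three insertions of $v$ survives, and then the triangle $\{t_1,t_2\}$ is reinserted as above, yielding an {\FII} of $G$ — the desired contradiction.

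I expect the main obstacle to be the bookkeeping when $v_2$ and $v_3$ are both ``problematic'' simultaneously — e.g.\ $v_3$ also happens to lie in $F$ and be connected to $v_2$ in $F$ while both color classes are blocked. Here one has to delete a slightly larger set $S$ (adding $v_2$, or the relevant $2$-neighbors of $v_2$, or a vertex of $v_2$'s pendent triangle) so that after re-solving on the smaller graph one can re-color $v_2$ to free up an option for $v$; the potential-function estimates \eqref{eqref:basic:rho}–\eqref{eqref:basic:rho2} are what guarantee $\mad$ stays $\le\frac83$ after the auxiliary attachments, and the minimality is measured by $|V^*|$ first so attaching pendent triangles (which add only $2$-vertices to $V^*$'s complement) genuinely reduces the induction parameter. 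Once the placement of $v$ is secured in every subcase, the proof closes exactly as in the preceding lemmas.
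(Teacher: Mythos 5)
Your general plan---delete a small set around $v$, apply minimality, reinsert---is the paper's method in spirit, but the proposal has a concrete error in the easy step and a genuine unfilled gap exactly where you flag ``the main obstacle''.

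First, the triangle reinsertion you describe is already not correct as stated. You claim that with $v, t_1 \in F$ one may put $t_2$ into $I_\alpha$ because ``it has no other neighbor''; but $2$-independence is a distance-$\ge 3$ condition, and $t_2$ is at distance~$2$ from both $v_1$ and $v_2$ through $v$. If $v_1 \in I_\alpha$ and $v_2 \in I_\beta$, then neither $I_\alpha + t_2$ nor $I_\beta + t_2$ is $2$-independent, and $t_1, t_2$ cannot both go to $F$ with $v$ (that closes the triangle in $F$). Placing $v$ in $I_\alpha$ or $I_\beta$ is blocked for the same reason. This single configuration---$v \in F$, $v_1 \in I_\alpha$, $v_2 \in I_\beta$---is the whole lemma; everything else is routine.

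Second, you acknowledge a larger deletion set may be needed but never specify one nor show it closes the argument, and the paper in fact does not use your $S=\{v,t_1,t_2\}$ at all. For $v_1 \in W_2 \cup W_4 \cup W_5$ the paper deletes only $\{t_1,t_2\}$ (so $v$ remains in $H$ and minimality is by $|V(\cdot)|$), shows that failure of the naive reinsertions forces $v\in F$, $v_1\in I_\alpha$, $v_2\in I_\beta$, and then defeats this by a local \emph{exchange} exploiting $v_1$'s own pendent/low-degree structure: $v_1$ and $t_2$ move into $F$, $t_1$ takes $v_1$'s old slot in $I_\alpha$, and for $W_4$ or $W_5$ the vertices of $v_1$'s pendent triangle(s) are rebalanced to keep everything valid. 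That exchange is precisely the content you would have to supply, and it does not come for free from minimality alone.

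Third, the case $v_1 \in W_3$---which is the bulk of the paper's proof---receives no treatment in your proposal beyond ``delete a slightly larger set''. There the paper deletes $S=\{v,v_1,t_1,t_2,x_1,x_2\}$ (with $x_1,x_2$ the $2$-neighbors of $v_1$), proves a subsidiary claim pinning down every {\FII} of $H=G-S$ (forcing $z_1,z_2\in F$ and $v_2\notin F$), and then runs a genuine potential-function argument: it bounds $\rho^*_H(v_2)$ and $\rho^*_H(z_1z_2v_2)$ via \eqref{eqref:basic:rho}--\eqref{eqref:basic:rho2}, attaches $J_2$ or pendent triangles at $v_2$ or $z_1$ while keeping $\mad\le\tfrac{8}{3}$ and strictly decreasing $|V^*|$, and derives a contradiction from the subsidiary claim combined with Lemma~\ref{lema:2B2}. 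None of this machinery is foreshadowed in the proposal, so the hard case of the lemma is left entirely open.
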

\begin{proof}
Suppose to the contrary there is a vertex $v\in W_4$ with a $W_{2345}$-neighbor. We use the labels as in Figure~\ref{fig-rc-4p}.
\begin{figure}[ht]
	\centering
  \includegraphics[scale=0.75,page=7]{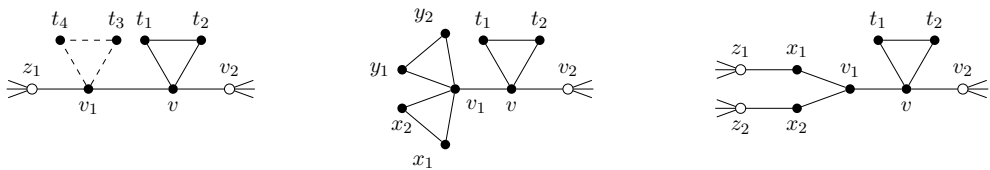}
    \caption{An illustration for Lemma~\ref{lem:rc-4p}.  }
    \label{fig-rc-4p}

\end{figure}

Assume $v_1\in W_{25}$.
Let $H=G-\{t_1,t_2\}$. By the minimality of $G$,  $H$ has an {\FII} \fii{F}{I_{\alpha}}{I_{\beta}}.
Since \fii{(F+t_1t_2)}{I_\alpha}{I_\beta} is not an {\FII} of $G$,  we have $v\in F$.
Also, since neither \fii{(F+t_2)}{(I_\alpha+t_1)}{I_\beta} nor \fii{(F+t_2)}{I_\alpha}{(I_\beta+t_1)} is an {\FII} of $G$, without loss of generality, we may assume $v_1\in I_\alpha$ and $v_2\in I_\beta$.
If $v_1\in W_2$, then \fii{(F+v_1t_2)}{(I_\alpha-v_1+t_1)}{I_\beta} is an {\FII} of $G$.
If $v_1\in W_4$, then we may assume $t_3,t_4\in F$, and so  \fii{(F+v_1t_2-t_4)}{(I_\alpha-v_1+t_1t_4)}{I_\beta} is an {\FII} of $G$.
If $v_1\in W_5$, then we may assume $\{x_1,x_2,y_1,y_2\}\subset F$ and so  $G$ has an {\FII} \fii{(F+v_1t_2-x_2y_2)}{(I'_\alpha+t_1x_2-v_1)}{(I'_\beta+y_2)}.

Now suppose $v_1\in W_3$.
Let  $S=\{v,v_1,t_1,t_2,x_1,x_2\}$ and let $H=G-S$.
\begin{claim}\label{claim:pendent:4:F}
For every {\FII} \fii{F}{I_\alpha}{I_\beta} of $H$,  $z_1,z_2\in F$ and $v_2\not\in F$.
\end{claim}
\begin{proof}
If $v_2\in F$,
then \fii{(F+S-t_1)}{(I_\alpha+t_1)}{I_\beta},
\fii{(F+S-t_1v_1)}{(I_\alpha+v_1)}{(I_\beta+t_1)}, or \fii{(F+S-t_1v_1)}{(I_\alpha+t_1)}{(I_\beta+v_1)} is an {\FII} of $G$. Thus $v_2\not\in F$, and we may assume $v_2\in I_\beta$.
If $z_i\not\in F$ for some $i$, then
\fii{(F+S-t_1)}{(I_\alpha+t_1)}{I_\beta} is an {\FII} of $G$.
\end{proof}

Suppose that $\rho^*_{H}(v_2)\ge 2$. Let $H'$ be the graph obtained from $H$ by identifying $v_2$ and $v^\ast$ in the graph $J_2$ in Figure~\ref{fig:2B_2}.
Then $\mad(H')\leq {8\over 3}$.
Since $|V^*(H')|<|V^*(G)|$,
by the minimality of $G$,  $H'$ has an {\FII}, which also gives an {\FII} \fii{F}{I_\alpha}{I_\beta} of $H$.
Now, $v_2\not\in F$ by Claim~\ref{claim:pendent:4:F}, but this contradicts Lemma~\ref{lema:2B2} (ii).
Hence, $\rho^*_H(v_2)\le 1$.
By~\eqref{eqref:basic:rho} and \eqref{eqref:basic:rho2},
\begin{eqnarray*}
&&\rho^*_{H}(z_1)+\rho^*_{H}(z_2)+1\ge \rho^*_{H}(z_1)+\rho^*_{H}(z_2)+\rho^*_{H}(v_2)\ge \rho^*_{H}(z_1z_2v_2)\ge -4\cdot6+3\cdot 9=3.
\end{eqnarray*}
Thus, we may assume that
$\rho^*_{H}(z_1)\ge 1$.
Let $H'$ be the graph obtained from $H$ by attaching a pendent triangle to $z_1$.
Then $\mad(H')\leq {8\over 3}$.
Since $|V^*(H')|<|V^*(G)|$,
by the minimality of $G$,  $H'$ has an {\FII}, which also gives an {\FII} \fii{F}{I_\alpha}{I_\beta} of $H$.
Thus, by Claim~\ref{claim:pendent:4:F},
we may assume $v_2\in I_\beta$ and $z_1, z_2\in F$.
By considering the pendent triangle of $H'$ at $z_1$, we know either \fii{(F+S-t_1x_1)}{(I_\alpha+t_1x_1)}{I_\beta} or \fii{(F+S-t_1x_1)}{(I_\alpha+t_1)}{(I_\beta+x_1)} is an {\FII} of $G$.
\end{proof}

\begin{lemma}\label{lem:rc-w3''}
In $G$, there is no $3$-vertex with a $2$-neighbor and a $W_3$-neighbor.
\rc{rc-3-w3''}
\end{lemma}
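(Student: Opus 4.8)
The plan is to delete a small set around the configuration, invoke minimality, and extend the resulting \FII. Let $v$ be the $3$-vertex in question, let $v_1\in W_2$ be its $2$-neighbor and $x\in W_3$ its $W_3$-neighbor, and let $v_3$ denote the third neighbor of $v$; let $u$ be the other neighbor of $v_1$, let $x_1,x_2\in W_2$ be the two $2$-neighbors of $x$, and let $z_1,z_2$ be their other neighbors. (Using \rc{rc-3p}, \rc{rc-22-c}, and \rc{rc-w3w3}, the third neighbor of $x$ is $v$ and each of $u,v_3,z_1,z_2$ is a $3^+$-vertex.) Put $S=\{v,v_1,x,x_1,x_2\}$ and $H=G-S$. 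The only vertices of $H$ with a neighbor in $S$ lie in $T:=\{u,v_3,z_1,z_2\}$, and exactly $8$ edges of $G$ are incident with $S$, so \eqref{eqref:basic:rho2} gives $\rho^*_H(T)\ge -4\cdot 5+3\cdot 8=4$. By the minimality of $G$, $H$ has an \FII \fii{F}{I_{\alpha}}{I_{\beta}}.

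Next I would clear away the coincidences among $u,v_3,z_1,z_2$. A coincidence $u\in\{x_1,x_2\}$ or $z_1\in\{x_1,x_2\}$ would make two adjacent $2$-vertices on no pendent triangle, contradicting \rc{rc-22-c}; a coincidence $v_3\in\{x_1,x_2\}$ would create a triangle forbidden by Lemma~\ref{lem:rc-cycle-3-d4-same1}(i); the remaining coincidences ($u=v_3$, $z_1=z_2$, and the like) are either excluded by Lemma~\ref{lem:rc-cycle-3-d4-same1} or dispatched by a direct argument. Thus I may assume $G[S]$ is the tree with edges $vv_1,vx,xx_1,xx_2$, whose four edges leaving $S$ end at distinct vertices of $T$.

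The heart of the proof is a case analysis on which vertices of $T$ lie in $F$. If $T\cap F=\emptyset$, then all four edges leaving $S$ reach vertices outside $F$, so placing $S$ inside $F$ keeps $F$ a forest and \fii{(F+S)}{I_{\alpha}}{I_{\beta}} is an \FII of $G$. Otherwise I detach parts of the tree $S$: placing $v_1$ in some $I_\gamma$ removes a leaf, placing $x$ in some $I_\gamma$ while keeping $x_1,x_2$ in $F$ as pendants of $z_1,z_2$ removes the broom at $x$, and placing $v$ in some $I_\gamma$ splits $S$ at its center. In every such move the neighbours of the relocated vertex that lie inside $S$ stay in $F$, so the only threats to the $2$-independence of $I_\gamma$ come from the vertices of $T$, and for a $2$-vertex such as $v_1$ the freedom to choose $\gamma\in\{\alpha,\beta\}$ removes all but coincidental threats. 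The one structural obstruction is that $x$ is a $W_3$-vertex: if $z_1,z_2,v_3$ meet both $I_\alpha$ and $I_\beta$, then $x$ is barred from every $I_\gamma$ and forced into $F$ (and a similar rigidity can occur at $v$). To break such a deadlock I would exploit the potential bound $\rho^*_H(T)\ge4$: since $\{t\}\subset T$ gives $\rho^*_H(t)\ge\rho^*_H(T)\ge4$ for each $t\in T$, and since by \eqref{eqref:basic:rho} attaching one $J_2$ still leaves $\rho^*_H(t')\ge\rho^*_H(\{t,t'\})-2\ge 2$ for another $t'\in T$, I can attach copies of $J_2$ at up to two vertices of $T$, obtaining $H'$ with $\mad(H')\le\frac83$ and $|V^*(H')|<|V^*(G)|$; applying minimality to $H'$ and Lemma~\ref{lema:2B2}(ii) re-derives an \FII of $H$ in which those vertices lie in $F$, which removes the obstruction.

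I expect the main difficulty to be bookkeeping rather than insight: one must run through the few patterns in which one, two, or three vertices of $T$ lie in $F$, and check in each that, after the gadget attachments above, the remaining five vertices $v,v_1,x,x_1,x_2$ can be distributed over $F$, $I_\alpha$, $I_\beta$ without violating $2$-independence or creating a cycle in $F$ — organized, as in the proofs of Lemmas~\ref{lem:rc-w3w3} and~\ref{lem:rc-4p}, by $|T\cap F|$ and by whether $v_1$ can be absorbed into an $I_\gamma$.
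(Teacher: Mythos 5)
There is a genuine gap, and it sits exactly at the point you identified as the heart of the matter. You write that ``since $\{t\}\subset T$ gives $\rho^*_H(t)\ge\rho^*_H(T)\ge 4$ for each $t\in T$'' — but this inequality is backwards. By definition $\rho^*_H(I)=\min\{\rho_H(K): I\subset K\subset V(H)\}$, and enlarging $I$ shrinks the family of admissible $K$, so $\rho^*_H$ is monotone \emph{non-decreasing} in its argument: $\{t\}\subset T$ gives $\rho^*_H(t)\le\rho^*_H(T)$, which yields no lower bound on $\rho^*_H(t)$ at all. The same reversal invalidates the subsequent step $\rho^*_H(t')\ge\rho^*_H(\{t,t'\})-2\ge 2$. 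In fact the paper's Claim~\ref{claim:w3''-potential}(a) proves $\rho^*_H(z_0)=\rho^*_H(z_3)=1$ (in your notation, $\rho^*_H(v_3)=\rho^*_H(u)=1$), so the singleton potentials are as small as they could possibly be; attaching $J_2$, which costs up to $2$, at either of those vertices would push a subgraph below potential $0$ and break $\mad\le\tfrac83$. Your deadlock-breaking mechanism therefore cannot be run, and the lemma does not fall to a generic ``attach $J_2$ wherever convenient'' argument.

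The rest of the proposal correctly identifies the set $S$, the bound $\rho^*_H(Z)\ge 4$, and the structural obstruction (the $W_3$-vertex $x$ and the centre $v$ can be barred from both $I_\alpha$ and $I_\beta$), and the paper does follow the same high-level template. But what is actually needed is the delicate ladder of potential facts the paper proves in Claim~\ref{claim:w3''-potential} — namely $\rho^*_H(z_0)=\rho^*_H(z_3)=1$ and $\rho^*_H(z_iz_j)\ge 2$ for all pairs — each obtained by a contradiction argument that first attaches a \emph{cheap} gadget (a pendent triangle costs $1$, $J_1$ costs $1$) and then uses the structural Claims~\ref{claim:w3'':candidates} and~\ref{claim:pendent:w3'} to reach a new {\FII}. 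Only after deducing $\rho^*_H(z_i)\ge 1$ for $i\in\{0,1,2\}$, $\rho^*_H(z_iz_j)\ge 2$, and $\rho^*_H(z_0z_1z_2)\ge 3$ can one attach a pendent triangle (not $J_2$) at each of $z_0,z_1,z_2$ simultaneously and close the case analysis. You would need to reconstruct essentially all of this; as written, the key arithmetic step is simply false. (Incidentally, the claimed elimination of all coincidences among the vertices of $T$ is also not fully justified — e.g.\ $u=z_1$ is not obviously forbidden — but the paper's proof is written so that it does not rely on such distinctness, since the bound $\rho^*_H(Z)\ge 4$ comes from counting edges incident with $S$, not vertices of $Z$.)
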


\begin{proof}
Suppose to the contrary that $v$ is a 3-vertex with a 2-neighbor and a $W_3$-neighbor. We use the labels as in Figure~\ref{fig:rc-w3''}.
Note that
it is easy to check $x_1$, $x_2$ are distinct from $v_2$. 
Let $H=G-S$, where $S=\{v,v_1,v_2,x_1,x_2\}$.

\begin{figure}[ht]
	\centering
  \includegraphics[scale=0.75,page=8]{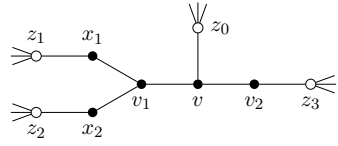} 
\caption{An illustration for Lemma~\ref{lem:rc-w3''}.}
    \label{fig:rc-w3''}

\end{figure}

\begin{claim}\label{claim:w3'':candidates}
For every {\FII} of \fii{F}{I_{\alpha}}{I_\beta} of $H$, we know $z_3\in F$.
Moreover,
if $z_0\in I_{\alpha}$ (resp. $I_\beta$), then one of $z_1$ and $z_2$ is in $F$ and the other is in $I_\beta$ (resp. $I_{\alpha}$).
\end{claim}
\begin{proof}
If $z_3\not\in F$, then \fii{(F+S)}{I_\alpha}{I_\beta},
\fii{(F+S-v_1)}{(I_\alpha+v_1)}{I_\beta}, or  \fii{(F+S-v_1)}{I_\alpha}{(I_\beta+v_1)} is an {\FII} of $G$.
Hence, $z_3\in F$.
Now, assume $z_0\in I_\alpha$.
Since neither \fii{(F+S-v_1)}{I_\alpha}{(I_\beta+v_1)} nor \fii{(F+S)}{I_\alpha}{I_\beta}
is an {\FII} of $G$, one of $z_1$ and $z_2$ is in $F$ and the other is in $I_\beta$.
\end{proof}
\begin{claim}\label{claim:pendent:w3'}
The following statements hold:
\begin{itemize}
\item[\rm(i)] Let $H'$ be the graph obtained from $H$ by attaching a pendent triangle $T$ to $z_0$.
If $H'$ has an {\FII} \fii{F'}{I'_\alpha}{I'_\beta}, then $z_0\not\in F'$.
\item[\rm(ii)]Let $H'$ be the graph obtained from $H$ by adding an edge $z_iz_j$ for some $i,j\in \{1,2,3\}$.
If $H'$ has an {\FII} \fii{F'}{I'_\alpha}{I'_\beta}, then $z_0\in F'$.
\end{itemize}
\end{claim}
\begin{proof}
For an {\FII} \fii{F'}{I'_\alpha}{I'_\beta} of $H'$, let $F=F'\cap V(H)$, $I_{\alpha}=I'_{\alpha}\cap V(H)$,
and $I_{\beta}=I'_{\beta}\cap V(H)$.
Since \fii{F}{I_\alpha}{I_\beta} is also an {\FII} of $H$, Claim~\ref{claim:w3'':candidates}  implies $z_3\in F$.

\smallskip

\noindent (i) Suppose to the contrary that $z_0\in F$.
Without loss of generality, we may assume a 2-vertex on $T$ belongs to
$I'_{\alpha}$.
Thus, either
\fii{(F'+S-v)}{(I'_{\alpha}+v)}{I_\beta'}
or \fii{(F+S-vv_1)}{(I_{\alpha}+v)}{(I_\beta+v_1)} is an {\FII} of $G$.

\smallskip

\noindent (ii) Suppose to the contrary that $z_0\not\in F$, say $z_0\in I_{\alpha}$.
By Claim~\ref{claim:w3'':candidates},
without loss of generality, assume  $z_1\in F$ and $z_2\in I_{\beta}$.
If $z_iz_j=z_1z_2$, then   \fii{(F+S-x_1)}{I_\alpha}{(I_\beta+x_1)} is an {\FII} of $G$.
If $z_iz_j=z_1z_3$, then \fii{(F+S)}{I_\alpha}{I_\beta} is an {\FII} of $G$.
If $z_iz_j=z_2z_3$, then  \fii{(F+S-v_2)}{I_\alpha}{(I_\beta+v_2)}   is an {\FII} of $G$.
\end{proof}

Let $Z=\{z_0,z_1,z_2,z_3\}$.
By  \eqref{eqref:basic:rho2},
$\rho^*_H(Z)\ge -4\cdot 5+3\cdot 8=4$.

\begin{claim}\label{claim:w3''-potential}
The following statements hold:
\begin{itemize}
\item[\rm(a)] $\rho^*_H(z_i)=1$ for $i\in\{0,3\}$.
\item[\rm(b)] $\rho^*_H(z_iz_j)\ge 2$ for $i,j\in\{0,1,2,3\}$ with $i\neq j$.
\end{itemize}
\end{claim}
\begin{proof}
Instead of proving (a) and (b) separately, we show the following (1)-(4):
\[\text{(1) } \rho^*_H(z_3)\leq 1.
\quad
\text{(2) } \rho^*_H(z_0)\leq 1,
    \quad
\text{(3) }  \rho^*_H(z_iz_1z_2)\le 3 \text{ for } i\in\{0,3\},\quad
\text{(4) } \rho^*_H(z_i z_0z_3)\le 3\text{ for } i\in\{1,2\}.\]
We argue it is sufficient to show (1)-(4).
From (3) and \eqref{eqref:basic:rho},   $\rho^*_H(z_3)+3\ge \rho^*_H(z_3)+\rho^*_H(z_0z_1z_2)\geq \rho^*_H(Z)\geq 4$. Thus $\rho^*_H(z_3)\geq 1$, which implies $\rho^*_H(z_3)=1$ by (1).
Similarly, (2) and (3) imply $\rho^*_H(z_0)=1$.
Hence, (1), (2), and (3) imply (a).
We now show how (3) and (4) imply (b).
Note that (3) and (4) are equivalent to  $\rho^*_H(z_iz_jz_k)\le 3$ for three distinct $i,j,k\in\{0,1,2,3\}$.
For $i,j\in\{0,1,2,3\}$,
by (3) and (4), since  \eqref{eqref:basic:rho} implies \[\rho^*_H(z_{i}z_{j})+6\geq\rho^*_H(z_{i}z_{j})+\rho^*_H(Z\setminus\{z_{i}\})+\rho^*_H(Z\setminus\{z_{j}\})\geq
\rho^*_H(z_{i}z_{j})+\rho^*_H(Z)+\rho^*_H(Z\setminus\{z_{i},z_{j}\})\geq 2\rho^*_H(Z)=8,\] we know $\rho^*_H(z_iz_j)\geq 2$.
Hence, it is sufficient to show (1)-(4).

In each case, we will define a graph $H'$ from $H$ so that $\mad(H')\le \frac{8}{3}$ and $|V^*(H')|<|V^*(G)|$.
By the minimality of $G$, $H'$ has an {\FII} \fii{F'}{I'_\alpha}{I'_\beta}.
Let $F=F'\cap V(H)$, $I_\alpha=I'_\alpha\cap V(H)$, and $I_\beta=I'_\beta\cap V(H)$.
Claim~\ref{claim:w3'':candidates} implies $z_3\in F$.

(1) Suppose to the contrary that $\rho^*_H(z_3)\ge 2$.
Let $H'$ be the graph obtained from $H$ by attaching two pendent triangles to $z_3$.
Since $\rho^*_H(z_3)\ge 2$, $\mad(G')\le\frac{8}{3}$.
Note that, since $z_3\in F$, $z_3$ has only $F$-neighbors in $H$.
Assume $z_0\in F$.
If $z_1,z_2\not\in I_\alpha$, then \fii{(F+S-v_1v_2)}{(I_{\alpha}+v_1)}{(I_\beta+v_2)} is an {\FII} of $G$.
Otherwise, either \fii{(F+S-v_2)}{I_{\alpha}}{(I_\beta+v_2)} or \fii{(F+S-v_1v_2)}{(I_{\alpha}+v_2)}{(I_\beta+v_1)} is an {\FII} of $G$.
Without loss of generality, assume $z_0\in I_\alpha$.
By Claim~\ref{claim:w3'':candidates}, \fii{(F+S-v_2)}{I_{\alpha}}{(I_\beta+v_2)} is an {\FII} of $G$.

(2) Suppose to the contrary that $\rho^*_H(z_0)\ge 2$.
Let $H'$ be the graph obtained from $H$ by attaching $J_1$ and a pendent triangle to $z_0$.
Note that $\mad(H')\le \frac{8}{3}$ since $\rho^*_H(z_0)\ge 2$.
By Claim~\ref{claim:pendent:w3'} (i),  $z_0\not\in F$.
Without loss of generality, assume $z_0\in I_{\alpha}$. By Claim~\ref{claim:w3'':candidates}, $z_1\in F$ and $z_2\in I_{\beta}$.
Also, $z_0$ has a $I'_{\beta}$-neighbor by Lemma~\ref{lema:2B2} (i).
Thus, \fii{(F+S-v)}{I_\alpha}{(I_\beta+v)} is an {\FII} of $G$.

(3) Suppose to the contrary that $\rho^*_H(z_{i}z_1z_2)\ge 4$ for some $i\in \{0,3\}$.
By (1), (2), and \eqref{eqref:basic:rho}, $1+\rho^*_H(z_{1}z_2)\geq \rho^*_H(z_i)+\rho^*_H(z_{1}z_2)\ge 4$, so  $\rho^*_H(z_{1}z_2)\ge 3$.
Therefore,
\begin{eqnarray}\label{eq:potential:i12}
&& \rho^*_H(z_i)=1,  \rho^*_H(z_1z_2)\ge 3, \rho^*_H(z_iz_1z_2)\ge 4.
\end{eqnarray}
Let $H'$ be the graph obtained from $H$ by attaching one pendent triangle $T$ to $z_i$ and adding an edge $z_1z_2$.
Note that by~\eqref{eq:potential:i12},   $\mad(H')\le \frac{8}{3}$.
By Claim~\ref{claim:pendent:w3'} (i) and (ii), it must be that $i=3$.

If $z_0\in F$, then we may assume a 2-vertex of $T$ belongs to $I'_{\alpha}$.
Furthermore, if \fii{(F+S-v_1v_2)}{(I_\alpha+v_2)}{(I_\beta+v_1)} is not an {\FII} of $G$, then either
\fii{(F+S-v_2x_1)}{(I_\alpha+v_2)}{(I_\beta+x_1)} or \fii{(F+S-v_2x_2)}{(I_\alpha+v_2)}{(I_\beta+x_2)} is an {\FII} of $G$.
Now, without loss of generality, assume $z_0\in I_\alpha$.
By Claim~\ref{claim:pendent:w3'} (ii), we may assume $z_1\in F$ and $z_2\in I_\beta$.
Now, \fii{(F+S-x_1)}{I_\alpha}{(I_\beta+x_1)} is an {\FII} of $G$.

(4) Without loss of generality, suppose to the contrary that $\rho^*_H(z_{0}z_1z_{3})\ge 4$.
Since \eqref{eqref:basic:rho} implies $\rho^*_H(z_0)+\rho^*_H(z_{1}z_3)\ge \rho^*_H(z_0z_1z_3) \ge 4$, together with (a)
(which is true since (1), (2), and (3) are proved), we know $\rho^*_H(z_{1}z_3)\ge 3$.
Therefore,
\begin{eqnarray}\label{eq:potential:013}
&& \rho^*_H(z_0)=1,  \rho^*_H(z_{1}z_3)\ge 3, \rho^*_H(z_{0}z_1z_3)\ge 4.
\end{eqnarray}
Note that by \eqref{eq:potential:013},  $\mad(H')\le \frac{8}{3}$, where  $H'$ is the graph obtained from $H$ by attaching one pendent triangle to $z_0$
and adding an edge $z_1z_3$.
By the minimality of $G$, $H'$ has an {\FII}, which is a contradiction by Claim~\ref{claim:pendent:w3'} (i) and (ii).
\end{proof}

By Claim~\ref{claim:w3''-potential}~(a) and \eqref{eqref:basic:rho}, we have $\rho^*_H(z_{0}z_1z_{2})\geq\rho^*_H(Z)-\rho^*_H(z_{3})\ge 3$.
In addition, for $i\in\{1,2\}$, since  $\rho^*_H(z_i)+1=\rho^*_H(z_i)+\rho^*_H({z_0})\ge \rho^*_H(z_{0}z_{i})\ge 2$, we have $\rho^*_H(z_{i})\ge 1$.
Therefore, by  Claim~\ref{claim:w3''-potential}
\[\rho^*_H(z_0)=1, \rho^*_H(z_1)\geq 1, \rho^*_H(z_2)\geq 1, \rho^*_H(z_{0}z_{1})\geq 2, \rho^*_H(z_{0}z_{2})\geq 2, \rho^*_H(z_{1}z_{2})\geq 2, \rho^*_H(z_{0}z_1z_{2})\geq 3.\]
Then  $\mad(H')\le \frac{8}{3}$, where
$H'$ is the graph obtained from $H$ by attaching a pendent triangle to each of $z_0$, $z_1$, $z_2$.
Since $|V^*(H')|<|V^*(G)|$, by the minimality of $G$, ${H'}$ has an  {\FII}, which also gives an {\FII}  \fii{F}{I_{
\alpha}}{I_\beta} of $H$.
By Claim~\ref{claim:pendent:w3'} (i), $z_0\not\in F$.
Together with Claim~\ref{claim:w3'':candidates}, we may assume $z_0\in I_{\alpha}$, $z_1\in I_\beta$, and $z_2\in F$.
By considering the pendent triangle at  $z_2$,   either \fii{(F+S-x_2)}{(I_{\alpha}+x_2)}{I_\beta} or \fii{(F+S-x_2)}{I_{\alpha}}{(I_\beta+x_2)} is an {\FII} of $G$.
\end{proof}

\begin{lemma}\label{lem:rc-w3'}
In $G$, there is no $3$-vertex with two $W_3$-neighbors.
\rc{rc-3-w3'}
\end{lemma}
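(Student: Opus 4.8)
The plan is to delete the entire configuration, invoke the minimality of $G$, and then extend an {\FII} of the resulting smaller graph to $G$, using the potential function to dispose of the few obstructing colourings. Let $v$ be a $3$-vertex with two $W_3$-neighbours $x,y$, let $w$ be the third neighbour of $v$, let $x_1,x_2$ be the two $2$-neighbours of $x$, and let $y_1,y_2$ be the two $2$-neighbours of $y$. I would first record the rigidity of the configuration. By \rc{rc-3-w3''}, $w$ is a $3^+$-vertex, since otherwise $v$ is a $3$-vertex with a $2$-neighbour and a $W_3$-neighbour. By \rc{rc-3p}, neither $x$ nor $y$ lies on a pendent triangle, so each of $x_1,x_2,y_1,y_2$ has a second neighbour; by \rc{rc-1vx}, \rc{rc-22-c}, and \rc{rc-3p}, each such second neighbour is a $3^+$-vertex. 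Moreover $x_1,x_2,y_1,y_2$ are pairwise distinct, for a coincidence among them would, together with $v$, produce a $4$-cycle forbidden by Lemma~\ref{lem:rc-cycle-3-d4-same1}(ii). Write $z_0=w$ and let $z_1,z_2,z_3,z_4$ be the second neighbours of $x_1,x_2,y_1,y_2$ respectively, noting that the $z_i$ may coincide with one another. Put $S=\{v,x,y,x_1,x_2,y_1,y_2\}$ and $H=G-S$. Since all seven vertices of $S$ lie in $V^*(G)$, we have $|V^*(H)|<|V^*(G)|$, so by the minimality of $G$ the graph $H$ has an {\FII} \fii{F}{I_\alpha}{I_\beta}.

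The next step is to understand when this {\FII} extends to $G$. I would run through the natural extension attempts: put all of $S$ into $F$; or put $v\in F$, split $\{x,y\}$ between $I_\alpha$ and $I_\beta$, and put $x_1,x_2,y_1,y_2\in F$; or the variants of the latter in which one or both of $x,y$ go into $F$ and the corresponding $2$-vertices are re-routed into the independent classes, possibly also breaking one of the edges $x_iz_i$ or $y_jz_j$ by moving a $2$-vertex out of $F$. Since the vertices of $S\setminus\{x,y\}$ are pairwise non-adjacent and each is joined to $V(H)$ through a single vertex of $\{z_0,\dots,z_4\}$, in each attempt the forest condition reduces to a statement about which $z_i$ lie in $F$ and in which components of $G[F]$ they lie, while $2$-independence reduces to which $z_i$ lie in $I_\alpha$ or $I_\beta$. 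Collecting the failure conditions of all the attempts, I would prove a \emph{candidates claim}: for every {\FII} of $H$ that does not extend, the colours and $G[F]$-components of $z_0,\dots,z_4$ are forced into one of a short list of highly constrained patterns. The coupling that keeps this list short is that $x$, $y$, and $z_0$ are pairwise at distance at most $2$ (each being a neighbour of $v$), so each independent class can be used by at most one of them, which forces obstructions to propagate across all of $z_1,z_2,z_3,z_4$.

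Finally the potential argument closes the case. Counting the $11$ edges of $G$ incident with $S$, \eqref{eqref:basic:rho2} yields $\rho^*_H(\{z_0,z_1,z_2,z_3,z_4\})\ge -4\cdot 7+3\cdot 11=5$, so by repeated use of \eqref{eqref:basic:rho} the singleton potentials $\rho^*_H(z_i)$ sum to at least $5$; in particular either some $z_i$ has $\rho^*_H(z_i)\ge 2$, or $z_0,\dots,z_4$ are five distinct vertices each of potential exactly $1$. If $\rho^*_H(z_i)\ge 2$ for some $i$, then attaching a copy of $J_2$ at $z_i$ produces a graph $H'$ with $\mad(H')\le\frac{8}{3}$ and $|V^*(H')|<|V^*(G)|$, so by minimality $H'$ has an {\FII}; its restriction is an {\FII} of $H$, and since $z_i$ plays the role of the vertex $v^*$ of $J_2$, Lemma~\ref{lema:2B2}(ii) gives $z_i\in F$. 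More generally, attaching a pendent triangle, a $J_1$, or a second gadget (using the pairwise bounds $\rho^*_H(z_iz_j)$ that \eqref{eqref:basic:rho} provides) forces chosen $z_i$ into prescribed classes or into distinct $G[F]$-components, and in every instance this contradicts the candidates claim. I expect the main obstacle to be the rigid endgame in which no single $z_i$ carries slack, together with the enumeration of the obstructing patterns, the tracking of $G[F]$-components, and the subcases in which several of $z_0,\dots,z_4$ coincide; apart from that, the proof should be a routine, if lengthy, case analysis of the kind already carried out in Lemma~\ref{lem:rc-w3''}.
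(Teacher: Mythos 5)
The broad outline matches the paper — same deleted set $S$, same bound $\rho^*_H(Z)\ge -4\cdot 7 + 3\cdot 11 = 5$, and the same overall strategy of proving a ``candidates claim'' about how $\{z_0,\dots,z_4\}$ must sit in an {\FII} of $H$ and then squeezing potentials until a gadget attachment is legal and contradicts that claim. However, you stop exactly at the point where the real work begins, and the two gaps you leave are not ``routine.''

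First, knowing $z_i\in F$ (from a $J_2$ attachment) is \emph{not} a contradiction with the candidates claim. The paper's Claim~\ref{claim:w':candidates} is perfectly happy with any one $z_i$ landing in $F$: in case~(i) exactly one of $z_1,z_2$ and exactly one of $z_3,z_4$ is in $F$; in case~(ii) the same, provided $\{z_1,\dots,z_4\}\not\subset F$; and $\{z_1,\dots,z_4\}\subset F$ is itself a live possibility when $z_0\notin F$. So attaching $J_2$ to a single $z_i$ never kills the configuration by itself. The paper never actually does this; what it does (Claims~\ref{claim:w'-potential} and~\ref{claim:w3'-pontential2}) is to attach specific \emph{combinations} — $J_2$ together with a pendent triangle at $z_0$, then $J_1$ plus a pendent triangle at $z_0$ plus a pendent triangle at $z_1$, then pendent triangles at $z_1$ and $z_2$, and so on — each time having the extra structure around $z_0$ interact with the ``one of $z_1,z_2$ in $F$'' constraint and the ``$\{z_1,\dots,z_4\}\cap I_\gamma=\emptyset$'' constraint. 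That interaction is where the contradiction comes from, and your write-up substitutes for it the assertion ``in every instance this contradicts the candidates claim,'' which is false for the single-gadget attachments you actually describe.

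Second, your dichotomy (``some $z_i$ has potential $\ge 2$, or all five have potential exactly $1$'') does not lead into a routine endgame. The paper is forced into the regime $\rho^*_H(z_0)=1$, $\rho^*_H(z_1z_2)=\rho^*_H(z_3z_4)=2$, and then (after Claim~\ref{claim:w3'-pontential2}) $\rho^*_H(z_1)=\rho^*_H(z_3)=0$ up to relabeling, so no single $z_i$ carries enough slack for a pendent-triangle-only gadget. The move that resolves this is genuinely new to this lemma: the paper adds a \emph{path of length two between $z_2$ and $z_4$} together with a pendent triangle at each of $z_2$ and $z_4$, using the pairwise bounds $\rho^*_H(z_2)\ge 2$, $\rho^*_H(z_4)\ge 2$, $\rho^*_H(z_2z_4)\ge 4$ to stay under $\mad\le\frac{8}{3}$. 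Nothing of this shape appears in Lemma~\ref{lem:rc-w3''} (whose endgame attaches pendent triangles at $z_0,z_1,z_2$), so ``a routine case analysis of the kind already carried out in Lemma~\ref{lem:rc-w3''}'' is exactly what fails here. You correctly identify the rigid endgame as the crux, but you do not supply the idea that closes it, and without that idea the proof does not complete.
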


\begin{proof}
Suppose to the contrary that there is a 3-vertex $v$ with two $W_3$-neighbors.
We use the labels as in Figure~\ref{fig:rc-w3'}.
By Lemma~\ref{lem:rc-cycle-3-d4-same1} (ii), all $x_i$'s are distinct.
Let $H=G-S$ where  $S=\{v,v_1,v_2,x_1,x_2,x_3,x_4\}$.

\begin{figure}[ht]
	\centering
  \includegraphics[page=9,scale=0.75]{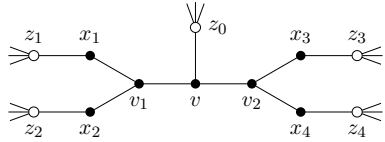}
\caption{An illustration for Lemma~\ref{lem:rc-w3'}.}
    \label{fig:rc-w3'}

\end{figure}

\begin{claim}\label{claim:w':candidates}
For every {\FII} \fii{F}{I_{\alpha}}{I_{\beta}} of $H$, the following statements hold: 
\begin{itemize}
\item[\rm(i)] if $z_0\in F$, then exactly one of $z_1,z_2$ is in $F$,
exactly one of $z_3,z_4$ is in $F$, and $\{z_1,z_2,z_3,z_4\}\cap I_\gamma=\emptyset$ for some  $\gamma\in \{\alpha,\beta\}$.
\item[\rm(ii)]  if $z_0\in I_{\gamma}$ for some $\gamma\in \{\alpha,\beta\}$  and $\{z_1,z_2,z_3,z_4\}\not\subset  F$, then exactly one of $z_1,z_2$ is in $F$,
exactly one of $z_3,z_4$ is in $F$, and $\{z_1,z_2,z_3,z_4\}\cap I_\gamma=\emptyset$.
\end{itemize}
\end{claim}
\begin{proof}
(i) Assume $z_0\in F$.
If $z_1,z_2\not\in F$, then
\fii{(F+S-v_2)}{(I_{\alpha}+v_2)}{I_{\beta}},
\fii{(F+S-v_2)}{I_{\alpha}}{(I_{\beta}+v_2)}, or
\fii{(F+S)}{I_{\alpha}}{I_{\beta}}
is an {\FII} of $G$.
If $z_1,z_2\in F$, then
 \fii{(F+S-v_1)}{(I_{\alpha}+v_1)}{I_{\beta}}, \fii{(F+S-v_1v_2)}{(I_{\alpha}+v_1)}{(I_{\beta}+v_2)}, or \fii{(F+S-v_1v_2)}{(I_{\alpha}+v_2)}{(I_{\beta}+v_1)} is an {\FII} of $G$.
Thus exactly one of $z_1$ and $z_2$ is in $F$.
By symmetry, exactly one of $z_3$ and $z_4$ is in $F$.
Now, if $\{z_1,z_2,z_3,z_4\}\cap I_\gamma \neq \emptyset$ for each $\gamma\in\{\alpha,\beta\}$, then either \fii{(F+S-v_1v_2)}{(I_{\alpha}+v_1)}{(I_{\beta}+v_2)} or \fii{(F+S-v_1v_2)}{(I_{\alpha}+v_2)}{(I_{\beta}+v_1)} is an {\FII} of $G$.

\smallskip

\noindent (ii) Without loss of generality, assume $z_0\in I_{\alpha}$.
If $z_1,z_2\in F$, then $|\{z_3,z_4\}\cap F|\le 1$, so \fii{(F+S-v_1)}{I_{\alpha}}{(I_\beta +v_1)} is an {\FII} of $G$.
If $z_1,z_2\not\in F$, then either \fii{(F+S)}{I_{\alpha}}{I_\beta} or \fii{(F+S-v_2)}{I_{\alpha}}{(I_\beta+v_2)} is an {\FII} of $G$.
Thus, exactly one of $z_1,z_2$ is in $F$.
By symmetry, exactly one of $z_3$ and $z_4$ is in $F$.
If $\{z_1,z_2,z_3,z_4\}\cap I_\alpha \neq \emptyset$, then either 
\fii{(F+S-v_1)}{I_{\alpha}}{(I_\beta+v_1)}
or \fii{(F+S-v_2)}{I_{\alpha}}{(I_\beta+v_2)} is an {\FII} of $G$.
\end{proof}

In the proof of each of the following cases, we will define a graph $H'$ by modifying $H$ so that $\mad(H')\le \frac{8}{3}$ and $|V^*(H')|<|V^*(G)|$.
By the minimality of $G$, $H'$ has an {\FII} \fii{F'}{I'_\alpha}{I'_\beta}.
Let $F=F'\cap V(H)$, $I_\alpha=I'_\alpha\cap V(H)$, and $I_\beta=I'_\beta\cap V(H)$.
Then we can apply Claim~\ref{claim:w':candidates}.
Note that by \eqref{eqref:basic:rho2},
we know $\rho^*_H(Z)\ge -4\cdot 7+3\cdot 11= 5$, where $Z=\{z_0,z_1,z_2,z_3,z_4\}$.

\begin{claim}\label{claim:w'-potential}
$\rho^*_H(z_1z_2)=\rho^*_H(z_{3}z_4)= 2$ and $\rho^*_H(z_0)= 1$.
\end{claim}
\begin{proof}
By~\eqref{eqref:basic:rho}, since $\rho^*_H(z_1z_2)+\rho^*_{H}(z_3z_4)+\rho^*_H(z_0)\ge \rho^*_H(Z)\ge 5$,
it is sufficient to show  $\rho^*_H(z_1z_2)\le 2$, $\rho^*_H(z_3z_4)\le 2$, and $\rho^*_H(z_{0})\le 1$.

Suppose to contrary that $\rho^*_H(z_1z_2)\ge 3$.
Let $H'$ be the graph obtained from $H$ by adding an edge $z_1z_2$.
Note that since  $\rho^*_H(z_1z_2)\ge 3$,  $\mad(H')\leq {8\over 3}$.
If $z_0\in F$, then by Claim~\ref{claim:w':candidates}~(i), we may assume that $z_1,z_3\in F$ and $z_2,z_4\in I_{\alpha}$, and therefore
\fii{(F+S-x_1v_2)}{(I_{\alpha}+x_1)}{(I_\beta+v_2)} is an {\FII} of $G$.
Suppose that $z_0\not\in F$, say
$z_0\in I_{\alpha}$.
If $\{z_1,z_2,z_3,z_3\}\not \subset F$, then by
Claim~\ref{claim:w':candidates}~(ii), we may assume that $z_1,z_3\in F$, $z_2,z_4\in I_\beta$, and therefore
\fii{(F+S-x_1)}{I_{\alpha}}{(I_\beta+x_1)} is an {\FII} of $G$.
If $\{z_1,z_2,z_3,z_3\}\subset F$, then  \fii{(F+S-v_2)}{I_{\alpha}}{(I_\beta+v_2)} is an {\FII} of $G$.
Therefore, $\rho^*_H(z_1z_2)\le 2$, and by symmetry, $\rho^*_H(z_{3}z_4)\le 2$.

In the following, we will show $\rho^*_H(z_0)\le 1$ in two steps.
First we show $\rho^*_H(z_0)\le 2$, and then show $\rho^*_H(z_0)\neq 2$.
Suppose to the contrary that $\rho^*_H(z_0)\ge 3$.
Let $H'$ be the graph obtained from $H$ by attaching $J_2$ and one pendent triangle to $z_0$. See the left figure of Figure~\ref{fig:rc-w3'-2}.
Note that $\mad(H')\le \frac{8}{3}$ since $\rho^*_H(z_0)\ge 3$.
By Lemma~\ref{lema:2B2} (i), we know $z_0\in F$. So Claim~\ref{claim:w':candidates} (i) applies, and moreover, by considering the pendent triangle at $z_0$, we know that either
\fii{(F+S-v)}{I_{\alpha}}{(I_\beta+v)}
or
\fii{(F+S-v)}{(I_{\alpha}+v)}{I_\beta} is an {\FII} of $G$.
Hence, $\rho^*_H(z_0)\le 2$.

Now suppose that $\rho^*_H(z_0)=2$.
By~\eqref{eqref:basic:rho},
\begin{eqnarray*}
\rho^*_H(z_{0}z_{1})+\rho^*_H(z_{0}z_2)+\rho^*_H(z_{0}z_3)+\rho^*_H(z_{0}z_4)&\ge&
\rho^*_H(z_{0}z_1z_2)+\rho^*_H(z_0)+\rho^*_H(z_{0}z_3z_4)+\rho^*_H(z_0)\\
&\ge& \rho^*_H(Z)+3\rho^*_H(z_0)\geq 5+3\cdot2=11,
\end{eqnarray*}
so without loss of generality, we may assume  $\rho^*_H(z_0z_1)\ge 3$.
Since  $\rho^*_H(z_0)+\rho^*_H(z_1)\ge \rho^*_H(z_{0}z_1)$ by \eqref{eqref:basic:rho} and we already have $\rho^*_H(z_0)\le 2$,
it follows that $\rho^*_H(z_1)\ge 1$.
Hence,
\begin{eqnarray}\label{eq:w3'}
&&\rho^*_H(z_0)=2, \rho^*_H(z_1)\ge 1, \rho^*_H(z_{0}{z_1})\ge 3.
\end{eqnarray}
Let $H'$ be the graph obtained from $H$ by attaching $J_1$ and one pendent triangle $T_0$  to $z_0$, and attaching one pendent triangle $T_1$ to $z_1$. See the middle figure of Figure~\ref{fig:rc-w3'-2}.
Note that by \eqref{eq:w3'},  $\mad(H')\le \frac{8}{3}$.
If $z_0\in F$, then by considering $T_0$, Claim~\ref{claim:w':candidates} (i) implies that
either \fii{(F+S-v)}{(I_{\alpha}+v)}{I_\beta}
or
\fii{(F+S-v)}{I_{\alpha}}{(I_\beta+v)}
 is an {\FII} of $G$.
Suppose that $z_0\not\in F$,
say $z_0\in I_{\alpha}$.
By Lemma~\ref{lema:2B2} (i), the neighbor of $z_0$ in $J_1$ is in $I'_\beta$.
If $\{z_1, z_2, z_3, z_4\}\not\subset F$,
then  by  Claim~\ref{claim:w':candidates} (ii), \fii{(F+S-v)}{I_{\alpha}}{(I_\beta+v)} is an {\FII} of $G$.
If $\{z_1, z_2, z_3, z_4\}\subset F$, then by considering $T_1$, either \fii{(F+S-x_1v_2)}{(I_{\alpha}+x_1)}{(I_\beta+v_2)} or \fii{(F+S-x_1v_2)}{I_{\alpha}}{(I_\beta+x_1v_2)} is an {\FII} of $G$.
Thus, $\rho^*_H(z_0)\neq 2$ and so $\rho^*_H(z_0)\leq 1$.
\end{proof}

\begin{figure}[ht]
	\centering
  \includegraphics[page=10,scale=0.9]{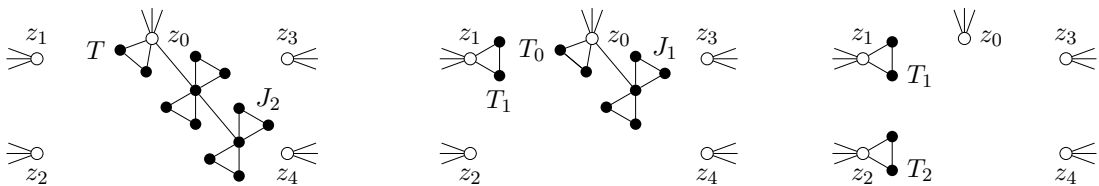}
\caption{An illustration for Lemma~\ref{lem:rc-w3'}.}
    \label{fig:rc-w3'-2}

\end{figure}

\begin{claim}\label{claim:w3'-pontential2}
Either $\rho^*_H(z_1)=0$ or $\rho^*_H(z_2)=0$, and also either  $\rho^*_H(z_3)=0$ or $\rho^*_H(z_4)=0$.
\end{claim}
\begin{proof}
Suppose to the contrary that $\rho^*_H(z_1)\ge 1$ and $\rho^*_H(z_2)\ge 1$. Note that $\rho^*_H(z_1z_2)=2$ by Claim~\ref{claim:w'-potential}.
Let $H'$ be the graph obtained from $H$ by attaching one pendent triangle to each of $z_1$ and $z_2$. See the right figure of Figure~\ref{fig:rc-w3'-2}.
Note that since $\rho^*_H(z_1),\rho^*_H(z_2)\ge 1$, and $\rho^*_H(z_1z_2)=2$,  $\mad(H')\leq {8\over 3}$.
If $z_0\in F$,  then by Claim~\ref{claim:w':candidates} (i) we may assume that $z_1,z_3\in F$ and $z_2,z_4\in I_{\alpha}$, and therefore
either \fii{(F+S-x_1v_2)}{(I_{\alpha}+x_1)}{(I_\beta+v_2)} or \fii{(F+S-x_1v_2)}{I_{\alpha}}{(I_\beta+x_1v_2)} is an
{\FII} of $G$.
Suppose that $z_0\not\in F$, say $z_0\in I_\alpha$.
If $\{z_1, z_2, z_3, z_3\}\not\subset F$, then by Claim~\ref{claim:w':candidates} (ii), we may assume $z_1\in F$, and therefore
either \fii{(F+S-x_1)}{(I_{\alpha}+x_1)}{I_\beta} or \fii{(F+S-x_1)}{I_{\alpha}}{(I_\beta+x_1)} is an {\FII} of $G$.
If $\{z_1, z_2, z_3, z_3\}\subset F$, then either \fii{(F+S-x_1v_2)}{(I_{\alpha}+x_1)}{(I_\beta+v_2)} or \fii{(F+S-x_1v_2)}{I_{\alpha}}{(I_\beta+x_1v_2)} is an {\FII} of $G$.
Hence, either $\rho^*_H(z_1)=0$ or $\rho^*_H(z_2)=0$, and by symmetry, either  $\rho^*_H(z_3)=0$ or $\rho^*_H(z_4)=0$.
\end{proof}

By Claim~\ref{claim:w3'-pontential2}, we may assume that $\rho^*_H(z_1)=\rho^*_H(z_3)=0$.
Let $H'$ be the graph obtained from $H$ by adding a path of length two between $z_2$ and $z_4$, and for each of $z_2$ and $z_4$, attach one pendent triangle $T_2$ and $T_4$, respectively.
Note that $\mad(H')\leq {8\over 3}$, since  adding a path of length two decreases the potential by 2, and the following inequalities, which follow from Claim~\ref{claim:w'-potential} and \eqref{eqref:basic:rho}:
\begin{eqnarray*}
&&\rho^*_H(z_2)= \rho^*_H(z_1)+\rho^*_H(z_2)\ge \rho^*_H(z_{1}z_{2})=2\\
&&\rho^*_H(z_4)= \rho^*_H(z_3)+\rho^*_H(z_4)\ge \rho^*_H(z_{3}z_{4})=2\\
&&\rho^*_H(z_{2}z_{4})+1=\rho^*_H(z_{2}z_{4})+\rho^*_H(z_1)+\rho^*_H(z_3)+\rho^*_H(z_0)\ge \rho^*_H(Z)\ge 5.
\end{eqnarray*}
If $z_2,z_4\not\in F$, then we may assume $z_2\in I_\alpha$ and $z_4\in I_\beta$ since $z_2$ and $z_4$ have distance two in $H'$.
Therefore, \fii{(F+S-v_1v_2)}{(I_\alpha+v_2)}{(I_\beta+v_1)},
\fii{(F+S-v_1)}{I_\alpha}{(I_\beta+v_1)},
or \fii{(F+S-v_2)}{(I_\alpha+v_2)}{I_\beta}  is an {\FII} of $G$.
Suppose that $z_2\in F$.
Without loss of generality, we may assume a 2-vertex on $T_2$ belongs to $I'_{\alpha}$, where \fii{F'}{I'_\alpha}{I'_\beta} is an {\FII} of $H'$.
If $z_0\in F$, then by Claim~\ref{claim:w':candidates} (i),   either \fii{(F+S-x_2v_2)}{(I_{\alpha}+x_2)}{(I_\beta+v_2)} or \fii{(F+S-x_2v_2)}{(I_{\alpha}+x_2v_2)}{I_\beta} is an {\FII} of $G$.
Suppose that $z_0\not\in F$.
If $\{z_1,z_2,z_3,z_4\}\not\subset F$, then by Claim~\ref{claim:w':candidates} (ii), \fii{(F+S-x_2)}{(I_{\alpha}+x_2)}{I_\beta}  is an {\FII} of $G$.
If $\{z_1,z_2,z_3,z_4\}\subset F$,
then \fii{(F+S-x_2v_2)}{(I_{\alpha}+x_2)}{(I_\beta+v_2)}
is an {\FII} of $G$.
\end{proof}

\begin{lemma}\label{lem:rc-w7}
In $G$, there is no $7$-vertex on three pendent triangles with  a $W_{235}$-neighbor.  \rc{rc-w7}
 \end{lemma}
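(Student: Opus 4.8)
The plan is to argue exactly as in the preceding reducibility lemmas: suppose a $7$-vertex $v$ lies on three pendent triangles $vt_1t_2$, $vt_3t_4$, $vt_5t_6$ and has a seventh neighbor $v_1\in W_{235}$. Let $S=\{v,t_1,\dots,t_6\}$ and set $H=G-S$. Since we only delete the six $2$-vertices plus one $7$-vertex, and in fact every vertex of $H$ keeps its status, $\mad(H)\le\frac83$ and — crucially for the induction on $(|V^*(G)|,|V(G)|)$ — we have $|V^*(H)|<|V^*(G)|$ because $v\in V^*(G)$ is removed (and the deleted $t_i$'s were pendent-triangle $2$-vertices, so they do not count in $V^*$). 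Thus by minimality $H$ has an {\FII} \fii{F}{I_\alpha}{I_\beta}. The first step is the standard dichotomy on whether $v_1\in F$: if $v_1\in F$, I would place $v$ back into $F$ together with one vertex from each pendent triangle, i.e.\ form $F+vt_1t_3t_5$, and put $t_2,t_4,t_6$ into $I_\alpha$ and $I_\beta$; since the three $t_{2i}$ are pairwise at distance $2$ through $v$, we can $2$-color them into $I_\alpha,I_\beta$ only if we are careful, but we have three objects and two classes so we instead send, say, $t_2\in I_\alpha$, $t_4\in I_\beta$, and $t_6$ — here is the catch — $t_6$ is at distance $2$ from both, so this naive move fails. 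The correct move when $v_1\in F$ is: put $v\in I_\alpha$ (or $I_\beta$), and since $v$'s only $H$-neighbor is $v_1\in F$, the only constraint is the $2$-independence of $I_\alpha$ within $F$ at distance $\le 2$; the $t_i$'s all go into $F$, so we form \fii{(F+S-v)}{(I_\alpha+v)}{I_\beta} provided $v_1$ has no $I_\alpha$-neighbor at distance $1$ — which it need not — so if that fails, use \fii{(F+S-v)}{I_\alpha}{(I_\beta+v)}; one of the two works unless $v_1$ simultaneously has an $I_\alpha$-neighbor and an $I_\beta$-neighbor among its $H$-neighbors, but that does not obstruct putting $v$ in a class since $v$ is only adjacent to $v_1$, so actually one of these always works, contradiction. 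Hence $v_1\notin F$.

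So $v_1\in I_\alpha\cup I_\beta$, say $v_1\in I_\alpha$. Now we want to put $v$ into $F$ together with a system of representatives of the three triangles and split the remaining three $2$-vertices; \fii{(F+vt_1t_3t_5)}{(I_\alpha+t_2t_4)}{(I_\beta+t_6)} fails because $t_2,t_4$ are at distance $2$ via $v$. Instead the clean move is \fii{(F+t_1t_2t_3t_4t_5t_6)}{I_\alpha}{(I_\beta+v)}: here all six triangle-vertices go to $F$ — but then each pendent triangle $vt_{2i-1}t_{2i}$ lies entirely in $F\cup\{v\}$ with $v\notin F$, so inside $F$ we have the edge $t_{2i-1}t_{2i}$ only, no triangle, fine; and $v\in I_\beta$ is at distance $\le 2$ in $G$ from... its neighbors are $v_1\in I_\alpha$ and the $t_i\in F$, and vertices at distance $2$ are $z$ (the far neighbor of $v_1$) and the other $t_j$'s which are in $F$ — so the only possible conflict is if $z\in I_\beta$. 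So: if $z\notin I_\beta$ (equivalently $v_1$ has no $I_\beta$-neighbor beyond... wait, $v_1\in W_{235}$ has few neighbors), then \fii{(F+T)}{I_\alpha}{(I_\beta+v)} works where $T=\{t_1,\dots,t_6\}$; if $z\in I_\beta$, swap and use \fii{(F+T)}{(I_\alpha+v)}{I_\beta}, which is fine since $v$'s only distance-$\le2$ neighbors outside $F$ are $v_1$ (in $I_\alpha$, distance $1$ — conflict!). So neither works when $v_1$'s far neighbor, through a second edge, creates a problem; this is where the hypothesis $v_1\in W_{235}$ must be used to pin down the local structure around $v_1$ and do a local recoloring of $v_1$ itself (moving $v_1$ into $F$, or flipping $v_1$'s class), exactly in the style of Lemmas~\ref{lem:rc-w5} and~\ref{lem:rc-4p}. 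Concretely I would, as in those lemmas, first try to move $v_1$ out of $I_\alpha$: if $v_1$ can go to $F$ we are back in the solved case $v_1\in F$; if $v_1$ can be flipped to $I_\beta$ then retry the above with roles of $\alpha,\beta$ swapped; and if $v_1$ is frozen then all of $v_1$'s $H$-neighbors lie in $F$, which together with $v_1\in W_{235}$ (so $v_1$ has at most a couple of $H$-neighbors, and they are of controlled type) forces a configuration we can color directly.

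The main obstacle, as always in this paper, is organizing the case split on the type of $v_1$ (whether it is in $W_2$, $W_3$, or $W_5$) and, when $v_1\in W_3$ or $W_5$, reconstructing the small neighborhood of $v_1$ and checking that every one of the finitely many restrictions of \fii{F}{I_\alpha}{I_\beta} to that neighborhood extends to an {\FII} of $G$. I expect this to reduce, as in Lemmas~\ref{lem:rc-w5}, \ref{lem:rc-w3''}, and \ref{lem:rc-w7}'s neighbours, to (i) a potential-function computation via \eqref{eqref:basic:rho} and \eqref{eqref:basic:rho2} showing that after deleting $S$ we can still afford to attach one or two pendent triangles (or a copy of $J_1$ or $J_2$) at the relevant vertices to force them into $F$, and (ii) a short finite check of moves of the form \fii{(F+S-\cdots)}{(I_\alpha+\cdots)}{(I_\beta+\cdots)}. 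The only genuinely new ingredient compared with the earlier lemmas is the arithmetic of splitting three pendent-triangle apex-free vertices $t_2,t_4,t_6$ among two independent classes, which is why one is forced to keep $v$ itself in one of $I_\alpha,I_\beta$ rather than in $F$; once that is recognized the argument is routine and parallels Lemma~\ref{lem:rc-w5} almost verbatim.
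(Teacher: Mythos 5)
Your overall plan (delete $v$ and the six pendent-triangle $2$-vertices, then extend an {\FII} of $H=G-(S\cup\{v\})$ by placing $v$ into $I_\alpha$ or $I_\beta$ rather than into $F$) is the right starting point, and you correctly identify the central difficulty: three triangle representatives cannot be split into two $2$-independent classes, so $v$ must not go to $F$. However, your handling of the case $v_1\in F$ contains a genuine error. You write that if $v_1$ simultaneously has an $I_\alpha$-neighbor and an $I_\beta$-neighbor in $H$, this ``does not obstruct putting $v$ in a class since $v$ is only adjacent to $v_1$,'' and conclude $v_1\notin F$. This dismisses exactly the obstruction you just named: if $z_1\in I_\alpha$ and $z_2\in I_\beta$ are the $H$-neighbors of $v_1$, each $z_i$ is at distance two from $v$ through $v_1$, so neither \fii{(F+S-v)}{(I_\alpha+v)}{I_\beta} nor \fii{(F+S-v)}{I_\alpha}{(I_\beta+v)} is an {\FII} of $G$. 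Placing $v$ in $F$ is ruled out by the three-into-two problem, and $v_1$ cannot be recolored either (moving $v_1$ to $I_\alpha$ or $I_\beta$ conflicts with $z_1$ or $z_2$ at distance one). So an arbitrary {\FII} of $H$ need not extend, and the later step that relies on ``being back in the solved case $v_1\in F$'' is circular; the final sentence about a ``frozen'' $v_1$ is a gesture, not a proof.

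The missing ingredient is the one you hint at but do not pin down: one must not take an arbitrary {\FII} of $H$, but of a carefully modified graph. After first disposing of $v_1\in W_5$ (then $G$ has only twelve vertices and an {\FII} is immediate, e.g.\ $v\in I_\alpha$, $v_1\in I_\beta$, all triangle vertices in $F$), assume $v_1\in W_{23}$, a $3^-$-vertex with at most two $H$-neighbors $z_1,z_2$. By \eqref{eqref:basic:rho} and \eqref{eqref:basic:rho2}, $\rho^*_H(v_1)\ge -4\cdot 7+3\cdot 10=2$, so attaching \emph{two} pendent triangles to $v_1$ yields $H'$ with $\mad(H')\le\frac{8}{3}$ and $|V^*(H')|<|V^*(G)|$. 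Restricting an {\FII} of $H'$ to $H$ gives an {\FII} \fii{F}{I_\alpha}{I_\beta} with extra leverage at $v_1$: if $v_1\in F$, each attached triangle contributes a non-$F$ vertex, and the two of them lie in different classes $I'_\alpha,I'_\beta$ (they are at distance two through $v_1$), which forces $z_1,z_2\in F$; then \fii{(F+S)}{(I_\alpha+v)}{I_\beta} is an {\FII} of $G$. If $v_1\in I_\alpha$, first try \fii{(F+S+v_1)}{(I_\alpha-v_1+v)}{I_\beta}; if it fails one again gets $z_1,z_2\in F$, and then \fii{(F+S)}{I_\alpha}{(I_\beta+v)} works. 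This gadget is precisely what rules out the bad configuration $z_1\in I_\alpha$, $z_2\in I_\beta$ that your argument leaves unaddressed, and it is the step your proposal is missing.
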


\begin{proof}
Let $v$ be a 7-vertex on three pendent triangles where $v_1$ is the neighbor of $v$ not on a pendent triangle.
If $v_1\in W_5$, then $G$ is a graph with twelve vertices, and it is easy to find an {\FII} of $G$.
Suppose $v_1\in W_{23}$, which implies that $v_1$ is a $3^-$-vertex.
We use the labels as in Figure~\ref{fig:7-vertex}.

\begin{figure}[ht]
	\centering
  \includegraphics[scale=0.75,page=11]{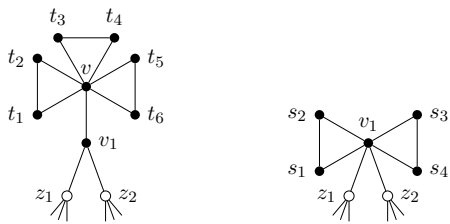} 
    \caption{An illustration for Lemma~\ref{lem:rc-w7}.}
    \label{fig:7-vertex}

\end{figure}

Let $S=\{t_1, \ldots, t_6\}$ and let $H:=G-(S\cup\{v\})$.
By \eqref{eqref:basic:rho} and \eqref{eqref:basic:rho2},  $\rho^*_H(v_1)\ge -4\cdot 7+3\cdot 10=2$.
Then $\mad(H')\leq {8\over 3}$, where $H'$ be the graph obtained from $H$ by attaching two pendent triangles to $v_1$.
Since $|V^*(H')|< |V^*(G)|$, by the minimality of $G$,   $H'$ has  an {\FII}, which also gives an {\FII}
\fii{F}{I_\alpha}{I_\beta} of $H$.
If $v_1\in I_\alpha$, then since \fii{(F+S+v_1)}{(I_\alpha-v_1+v)}{I_\beta} is not an {\FII} of $G$, we know $z_1, z_2\in F$.
Now, \fii{(F+S)}{I_\alpha}{(I_\beta+v)} is an {\FII} of $G$.
If $v_1\in F$, then by considering two pendent triangles of $H'$, we have $z_1, z_2\in F$, and so \fii{(F+S)}{(I_\alpha+v)}{I_\beta} is an {\FII} of $G$.
\end{proof}

\section{Reducible Configurations [C${}^{\prime}$1]-[C${}^{\prime}$5]}\label{sec:proof:configuration2}

\begin{lemma}\label{lem:rc-cycle-w3-w2}
The subgraph of $G$ induced by $W_{23}$ is a forest. \rcp{rcp-cycle-w3-w2}
\end{lemma}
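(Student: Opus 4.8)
The plan is to delete the cycle and extend an \FII\ of what remains. First I would fix the local picture. Let $C$ be a cycle with $V(C)\subseteq W_{23}$. Since $W_3$ is independent (\rc{rc-w3w3}) and no two $W_2$-vertices are adjacent (\rc{rc-22-c}), the vertices of $C$ alternate between $W_3$ and $W_2$; hence $C$ has even length $2k$ with $k\ge 2$, and I write $C=x_1y_1x_2y_2\cdots x_ky_kx_1$ with $x_i\in W_3$ and $y_i\in W_2$, so that $y_i$ is adjacent to $x_i$ and $x_{i+1}$ (indices modulo $k$). Each $x_i$ has a unique third neighbour $z_i$ (its non-$2$-neighbour); as $z_i$ is a $3^+$-vertex lying in neither $W_2$ nor (by independence of $W_3$) $W_3$, we have $z_i\notin W_{23}$, and clearly $z_i\notin V(C)$. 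The constraints \rc{rc-3-w3''} and \rc{rc-3-w3'} further restrict the $z_i$'s but are likely needed only to streamline the degenerate case.

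Next I would set $S=V(C)$ and $H=G-S$. Since every $x_i$ and every $y_i$ lies in $V^*(G)$ and $S\neq\emptyset$, we get $|V^*(H)|<|V^*(G)|$, so by the minimality of $G$ the graph $H$ has an \FII\ \fii{F}{I_\alpha}{I_\beta}; note $\mad(H)\le\frac83$ automatically. In $H$ the only vertices that were adjacent to $V(C)$ in $G$ are the $z_i$'s. For the extension I would put every $x_i$ into $F$: this preserves acyclicity of the $F$-part, because in $F\cup\{x_1,\dots,x_k\}$ each $x_i$ is adjacent only to $z_i$ (when $z_i\in F$) among the vertices placed so far, hence is a leaf or an isolated vertex. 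It then remains to place each $y_i$ into one of $F$, $I_\alpha$, $I_\beta$. The vertices of $G$ at distance at most $2$ from $y_i$ are $x_i,x_{i+1}$ (already in $F$) together with $z_i,z_{i+1},y_{i-1},y_{i+1}$, so $y_i$ may go into $I_\gamma$ precisely when $\gamma$ is absent from $\{z_i,z_{i+1},y_{i-1},y_{i+1}\}$. I would therefore $2$-colour the $y_i$'s around the $k$-cycle $y_1\cdots y_k$ with colours $\{\alpha,\beta\}$, where $y_i$ additionally forbids the colour(s) appearing on $z_i$ and $z_{i+1}$, using $F$ as a relief valve: if some $y_i$ is forbidden from both $\alpha$ and $\beta$, then one of $z_i,z_{i+1}$ lies in $I_\alpha$ and the other in $I_\beta$, so neither is in $F$, and $y_i$ may be safely moved into $F$, since the only way $F$ could acquire a cycle through $V(C)$ is if all of $C$ ended up in $F$.

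The crux is this last degenerate situation, and the odd-parity analogue in the list-colouring of the $y$-cycle: the chosen \FII\ of $H$ could force every $y_i$ into $F$ — equivalently, $z_1,z_2,\dots,z_k$ alternate between $I_\alpha$ and $I_\beta$ around $C$ with none in $F$ (which forces $k$ even) — and then $C\subseteq F$ is a cycle, while the naive alternatives (placing the $x_i$'s into $I_\alpha\cup I_\beta$ according to the $z_i$'s) can be blocked by a $z_i$ having an $H$-neighbour in the opposite independent set. To defeat this I would use the slack provided by the potential function: by \eqref{eqref:basic:rho2} applied with this $S$, since exactly $3k$ edges of $G$ are incident with $V(C)$ (the $2k$ edges of $C$ and the $k$ edges $x_iz_i$),
\[ \rho^*_H(\{z_1,\dots,z_k\})\ \ge\ -4\cdot 2k + 3\cdot 3k\ =\ k. \]
So on average $\rho^*_H(z_i)\ge 1$, which via \eqref{eqref:basic:rho} leaves room to attach a gadget — a pendant triangle, $J_1$, or $J_2$ from Figure~\ref{fig:2B_2} — to a suitably chosen $z_i$, producing $H'$ with $\mad(H')\le\frac83$ and $|V^*(H')|<|V^*(G)|$; by minimality $H'$ has an \FII, which restricts to an \FII\ of $H$ in which, by Lemma~\ref{lema:2B2}, the status of that $z_i$ is pinned down (in particular $z_i\in F$ when $J_2$ is attachable, for which $\rho^*_H(z_i)\ge 2$ is needed). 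Once some $z_i$ is known to lie in $F$, the alternation is broken and the extension above produces an \FII\ of $G$, the desired contradiction.

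I expect the main obstacle to be exactly the bookkeeping that closes every case once the gadgets are attached — arguing that the resulting list-colouring of the $y$-cycle, with the relief valve into $F$, always succeeds without creating a cycle in $F$ — together with the residual sub-case in which every $\rho^*_H(z_i)$ equals exactly $1$ and \eqref{eqref:basic:rho} is tight throughout (so that $J_2$ cannot be attached and the potential is exhausted), which must be handled by a direct argument, presumably by exploiting the structure forced on the $z_i$'s by \rc{rc-3-w3''} and \rc{rc-3-w3'} or by attaching pendant triangles to several $z_i$'s simultaneously.
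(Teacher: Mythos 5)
Your skeleton is the same as the paper's (delete $V(C)$, invoke minimality to get an {\FII} of $H=G-V(C)$, use $\rho^*_H(Z)\ge -4\cdot 2k+3\cdot 3k=k$, and attach the gadgets of Figure~\ref{fig:2B_2} to the $z_i$'s to constrain their position), but the way you propose to close the argument has a genuine gap. You identify the only bad case as ``the $z_i$'s alternate between $I_\alpha$ and $I_\beta$ with none in $F$'' and assert that once some $z_i$ is pinned into $F$ (via $J_2$) ``the extension above produces an {\FII} of $G$.'' That is not so: the case $Z\subset F$ is also fatal to your extension when the number of $W_2$-vertices on $C$ is odd. There every $W_2$-vertex has list $\{\alpha,\beta\}$, the odd conflict cycle admits no proper $2$-colouring, and your relief valve into $F$ is no longer safe, because two consecutive $z$'s now lie in $F$ and may belong to the same component of the old forest, so adding the intervening segment of $C$ to $F$ can close a cycle (this also shows that your parenthetical claim, that $F$ can only acquire a cycle if all of $C$ lands in $F$, is false as stated). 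Hence forcing one $z_i\in F$ --- which in any case requires $\rho^*_H(z_i)\ge 2$ for $J_2$, something $\sum_i\rho^*_H(z_i)\ge k$ does not guarantee --- does not finish the proof.

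The paper is structured around exactly this point. It first proves a dichotomy (for every {\FII} of $H$, either $Z\subset F$ or $Z\cap F=\emptyset$), via a sequential extension argument that your greedy list-colouring sketch does not establish for the partially mixed patterns with singleton lists; it then kills each horn with a \emph{different} gadget: a pendent triangle attached at some $z_i$ excludes $Z\subset F$, because the triangle's $2$-vertex outside the forest allows the adjacent $W_3$-vertex of $C$ to be moved into an independent set, which is precisely what repairs the odd-parity case, while $J_1$ attached at some $z_j$ excludes $Z\cap F=\emptyset$ by Lemma~\ref{lema:2B2}(i). Attaching both gadgets --- at a single $z_i$ when $\rho^*_H(z_i)\ge 2$, and otherwise at two distinct vertices using that $\rho^*_H(z_i)=1$ for all $i$ forces $\rho^*_H(z_1z_2)\ge 2$ --- yields contradictory conclusions, so no final extension is needed. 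This last step is exactly your unresolved residual case ``all $\rho^*_H(z_i)=1$,'' and it cannot be patched by attaching pendent triangles to several $z_i$'s alone, since pendent triangles only rule out $Z\subset F$ and do nothing against the alternating case; you need the second gadget ($J_1$) and the dichotomy to make the two conclusions collide.
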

\begin{proof}
Suppose to the contrary that there is a cycle $C$ consisting of $W_{23}$-vertices.
By~\rc{rc-22-c}~and~\rc{rc-w3w3}, $C$ is an even cycle such that a $W_3$-vertex and a $W_2$-vertex appear alternatively.
Let $C:u_1v_1u_2v_2\ldots u_kv_k$ ($k\ge 2$) where $u_i\in W_2$ and $v_i\in W_3$.
Let $z_i$ be the neighbor of $v_i$ not on $C$, and let $Z=\{z_1,\ldots,z_k\}$.
Let $H=G-V(C)$.
Since $|V^*(H)|< |V^*(G)|$, by the minimality of $G$, $H$ has an {\FII}.

\begin{claim}\label{claim:w2-w3-cycle}
For every {\FII} \fii{F}{I_\alpha}{I_\beta} of $H$,
either $Z\subset F$ or $Z\cap F=\emptyset$.
\end{claim}
\begin{proof}
Let $Z\cap F=\{z_{i_1}, z_{i_2},\ldots,z_{i_t}\}$ where $i_1<\cdots<i_t$.
Suppose to the contrary that $1\leq t\leq k-1$.
Without loss of generality, assume $z_k\in I_{\alpha}$ and $i_1=1$ so that $z_{i_1}=z_1\in F$.
If $t=1$, then  \fii{(F+V(C)-u_1)}{I_{\alpha}}{(I_\beta+u_1)} is an {\FII} of $G$.
Now assume $t\ge 2$.
Add $u_1$ to $I_\beta$.
For each $s\in\{2, 3, \ldots, t\}$, add $u_{i_s}$ to either $I_\alpha$ or $I_\beta$ one by one according to the following rule:
add $u_{i_s}$ to $I_\alpha$ and $I_\beta$ if either $u_{i_s-1}$ or $z_{i_s-1}$ is in $I_\beta$ and $I_\alpha$, respectively.
Note that both $u_{i_s-1}\in I_\alpha$, $z_{i_s-1}\in I_\beta$ and $u_{i_s-1}\in I_\beta$, $z_{i_s-1}\in I_\alpha$ cannot happen since $u_{i_s-1}$ is added to either $I_\alpha$ or $I_\beta$ if and only if $z_{i_s-1}\in F$.
Also, since $t\le k-1$, $u_{i_t}$ and $u_1$ has distance at least three.
Now add all vertices in $V(C)\setminus \{u_{i_1},\ldots, u_{i_t}\}$ to $F$, which results in an  {\FII} of $G$.
\end{proof}

\begin{claim}\label{claim:only-w2-w3}
Let $H'$ be a graph with an {\FII} \fii{F'}{I'_{\alpha}}{I'_\beta}.
\begin{itemize}
\item[\rm(i)] If $H'$ is the graph obtained from  $H$ by attaching a pendent triangle $T$ to some $z_i$, then $Z\cap F'=\emptyset$.
\item[\rm(ii)] If $H'$ is the graph obtained from  $H$ by attaching $J_1$ in Figure~\ref{fig:2B_2} to some $z_i$, then  $Z\subset F'$.
\end{itemize}
\end{claim}
\begin{proof}
Let \fii{F}{I_\alpha}{I_\beta} be a restriction of \fii{F'}{I'_{\alpha}}{I'_\beta} to $V(H)$, which is an {\FII} of $H$.
By Claim~\ref{claim:w2-w3-cycle}, either $Z\subset F$ or $Z\cap F=\emptyset$.
Hence, either $Z\subset F'$ or $Z\cap F'=\emptyset$.

\noindent (i) Suppose to the contrary that $Z\cap F'\neq \emptyset$, so $Z\subset F'$.
Without loss of generality, let $z_i=z_1$ and assume a 2-vertex on $T$ belongs to $I'_\alpha$.
Let $U_1=\{u_i\mid i\text{ is odd }, 3\le i\le k\}$
and $U_2=\{u_i\mid i\text{ is even }, 4\le i\le k\}$.
Then  \fii{(F+v_2v_3\ldots v_k+u_1u_2)}{(I_{\alpha}+U_1)}{(I_\beta+U_2+v_1)} is an {\FII} of $G$.

\smallskip

\noindent (ii) Suppose to the contrary that $Z\not\subset F'$, so $Z\cap F'=\emptyset$.
Without loss of generality assume $z_1\in I'_{\alpha}$.
By Lemma~\ref{lema:2B2} (i), the center of $J_1$ is in $I'_{\beta}$, so $G$ has an {\FII}
\fii{(F+V(C)-v_1)}{I_{\alpha}}{(I_\beta+v_1)}.
\end{proof}

\noindent(Case 1)
Suppose $\rho^*_H(z_i)\ge 2$ for some $i$. Let $H''$ be the graph obtained from  $H$ by attaching a pendent triangle $T$ and the graph $J_1$ in Figure~\ref{fig:2B_2} to $z_i$.
Then $\mad(H'')\leq {8\over 3}$.
Since $|V^*(H'')|<|V^*(G)|$, by the minimality of $G$, ${H''}$ has an {\FII}.
 Note that we may apply both Claim~\ref{claim:only-w2-w3} (i) and (ii) to $H''$ and conclude both $Z\cap F'=\emptyset$ and $Z\subset F$, which is a contradiction.

\smallskip

\noindent(Case 2)
Suppose $\rho^*_H(z_i)\le 1$ for all $i\in[k]$.
By \eqref{eqref:basic:rho} and \eqref{eqref:basic:rho2},
$\sum_{i=1}^{k} \rho^*_H{(z_i)}\ge \rho^*_H(Z)\ge  -4\cdot 2k+ 3\cdot 3k \ge k$,
and so $\rho^*_H(z_i)=1$ for all $i$.
Then we have $ \rho^*_H(z_1z_2)\ge 2$, since
\[\rho^*_H(z_{1}z_2)+k-2=\rho^*_H(z_1z_2)+\sum_{i\ge 3}\rho^*_H(z_i)\ge \rho^*_H(Z)\ge k.\]
Let $H''$ be the graph obtained from $G$ by attaching a pendent triangle $T$ to $z_1$ and attaching the graph $J_1$ in Figure~\ref{fig:2B_2} to $z_2$.
As in the previous case, $\mad(H'')\leq {8\over 3}$.
Since $|V^*(H'')|<|V^*(G)|$, by the minimality of $G$,   ${H''}$ has an {\FII}.
Note that we may apply both Claim~\ref{claim:only-w2-w3} (i) and (ii) to $H''$ and conclude both $Z\cap F'=\emptyset$ and $Z\subset F$, which is a contradiction.
\end{proof}
The following lemma implies \rcp{rcp-4-only2}.
\begin{lemma}\label{lemma:rc-4-only2}
In $G$, there is no $V_4$-vertex $v$ satisfying one of the following:
\begin{itemize}
\item[\rm (i)] $v$ has two $W_5$-neighbors. 
\item[\rm(ii)] $v$ has three $W_2$-neighbors and the last neighbor is in $W_{235}$.
\item[\rm(iii)] $v$ has two $W_2$-neighbors and the other two neighbors are in $W_{235}$.
\end{itemize}
\end{lemma}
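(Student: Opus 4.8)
We argue by contradiction inside the minimum counterexample $G$ fixed in Section~\ref{sec:outline:main}, so suppose $G$ contains a $V_4$-vertex $v$ as in one of (i)--(iii). In each case delete from $G$ a set $S$ consisting of $v$ together with all of its ``light'' neighbours and their private vertices: every $W_2$-neighbour of $v$; every $W_3$-neighbour of $v$ and its two $2$-neighbours; and every $W_5$-neighbour of $v$ and the four $2$-vertices on its two pendent triangles. (Thus in (i), $S$ consists of $v$, its two $W_5$-neighbours $w_1,w_2$, and the eight $2$-vertices on the four pendent triangles, while the other two neighbours $v_3,v_4$ of $v$ survive.) Set $H=G-S$ and let $Z=\{z_1,z_2,\dots\}$ be the set of vertices of $H$ having a neighbour in $S$ --- the second neighbour of each deleted $2$-vertex, together with $v_3,v_4$ in case (i); possible coincidences among the $z_i$, or edges among them or to $v$, do not affect the arguments below. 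By \rc{rc-22-c} and \rc{rc-3p}, every vertex of $S$ that is not a $2$-vertex on a pendent triangle lies in $V^*(G)$, and at least three such vertices are deleted in every case, so $|V^*(H)|<|V^*(G)|$.

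Next we feed in the potential function. Applying \eqref{eqref:basic:rho2} with the graph $G$ and the set $S$ (taking $T=Z$) gives $\rho^*_H(Z)\ge -4|S|+3e(S)$, where $e(S)$ is the number of edges of $G$ incident with $S$; in each of the (few) sub-cases --- determined by whether the $W_{235}$-neighbours of $v$ lie in $W_2$, $W_3$, or $W_5$ --- this bound is at least $|Z|$ (for instance, in (i), $|S|=11$ and $e(S)=16$, so $\rho^*_H(v_3v_4)\ge 4$). Attaching a pendent triangle to a $z_i$ keeps the maximum average degree at most $\frac{8}{3}$ and adds no vertices to $V^*$, and wherever \eqref{eqref:basic:rho} lets one peel off a spare unit of potential onto a single $z_i$ or a pair $z_iz_j$ one may instead attach $J_1$ or $J_2$ there, the few extra $V^*$-vertices being covered by the slack noted above; securing that spare unit in the tightest sub-cases needs the two-step estimate (bound $\rho^*_H(z_i)$ first, then $\rho^*_H(z_iz_j)$) used in the proofs of Lemmas~\ref{lem:rc-w3''} and~\ref{lem:rc-w3'}. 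After this preparation, the minimality of $G$ yields an {\FII} of the modified graph, which restricts to an {\FII} \fii{F}{I_\alpha}{I_\beta} of $H$.

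It remains to extend \fii{F}{I_\alpha}{I_\beta} to an {\FII} of $G$, which contradicts the choice of $G$, and this is done by casework on where the relevant $z_i$ lie. When some $z_i\notin F$ the extension is routine: a deleted $W_2$-vertex $u_i$ goes into $F$ and $v$ into whichever of $I_\alpha,I_\beta$ is free near $z_i$; a deleted $W_5$-neighbour goes into $F$ with the two chosen $2$-vertices of its pendent triangles split between $I_\alpha$ and $I_\beta$ --- legitimate because those two vertices are at distance two, exactly as in the proof of Lemma~\ref{lema:2B2} --- and a deleted $W_3$-neighbour goes into $F$ with its two $2$-neighbours split between $I_\alpha$ and $I_\beta$. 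The step I expect to dominate the work is the case in which every relevant $z_i$ lies in $F$: then putting $v$ in $F$ closes a cycle through some $z_i$, $u_i$, $v$ (or through two deleted triangles), while putting $v$ in $I_\alpha$ or $I_\beta$ produces a two-coloured $P_4$, so no naive extension works. This is precisely where the attached gadgets pay off: by Lemma~\ref{lema:2B2}, a $z_i$ carrying $J_1$ must lie in $F$ and a $z_i$ carrying $J_2$ must lie outside $F$, so the ``all in $F$'' scenario is either already contradictory or narrows the partition of $Z$ to the single pattern that does extend. The bulk of the proof is therefore the case-by-case verification that the potential is large enough in the worst subcase to afford the needed gadget, followed by exhibiting, for each surviving pattern of $Z$, the explicit placement of $v$, the $u_i$'s, and the private triangle and path vertices, and checking that it is an {\FII} of $G$.
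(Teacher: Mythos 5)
Your high-level plan — delete $v$ and the closures of its light neighbours, bound the potential on the boundary set $Z$ via \eqref{eqref:basic:rho2}, attach gadgets to $H$ so that its {\FII} is constrained, and then extend — is indeed the skeleton of the paper's proof, and your potential computation for (i) (namely $|S|=11$, $e(S)=16$, $\rho^*_H(u_3u_4)\ge 4$) matches the paper. But there are concrete errors and substantial gaps.

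First, you have the effects of $J_1$ and $J_2$ reversed. Lemma~\ref{lema:2B2} says that an attached copy of $J_2$ forces the attaching vertex \emph{into} $F$, while $J_1$ does not force membership in $F$ at all — it only constrains the placement of its centre in the event that the attaching vertex lies in $I_\alpha$ or $I_\beta$. Your sentence ``a $z_i$ carrying $J_1$ must lie in $F$ and a $z_i$ carrying $J_2$ must lie outside $F$'' is backwards on both counts, and this matters because the entire purpose of the gadgets is to steer the {\FII} of $H$ into the one configuration from which you cannot extend, producing the contradiction.

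Second, you misidentify the bottleneck. In case (ii) with $u_4\in W_2$ the hard situation is \emph{exactly two} of the $z_i$ in $F$, not all of them: if all $z_i\in F$ (or if at most one is), the extension is immediate, by placing $v$ in an $I_\gamma$ avoiding the non-$F$ $z_i$, or by putting all of $S$ into $F$. The paper's Claim~\ref{claim:rc-4-F2} pins this down, and the bulk of the argument (Claim~\ref{claim:please_this_is_the_last2} plus the chain of potential inequalities afterwards) is devoted to ruling out precisely the ``exactly two'' pattern by cycling through admissible gadget configurations (edges $z_iz_j$, $J_1$, $J_2$, and pendent triangles in various combinations). Your proposal gestures at this but does not carry out the case analysis of which gadget can be afforded where, which is the content of the proof.

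Third, your gadget plan differs from the paper's in (i): there the paper adds an \emph{edge} $u_3u_4$, not a triangle, and uses the resulting forest constraint on $\{u_3,u_4\}\cap F$ to verify that $F+S$ minus two chosen triangle vertices is acyclic — a point your sketch does not address. And in (iii), the paper first invokes (i) and (ii) to reduce to $u_3\in W_3$, $u_4\in W_{35}$ before doing the potential work, a reduction absent from your uniform treatment. In short, the strategy is right but the proof is not: the $J_1$/$J_2$ confusion must be fixed, the genuine hard case must be located, and the detailed potential/gadget casework cannot be elided.
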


\begin{proof}
Let $u_1, u_2, u_3,u_4$ be neighbors of a vertex $v\in V_4$.

\smallskip

\noindent (i) Suppose to the contrary that  $u_1,u_2\in W_5$.
Let $S=N_G[u_1]\cup N_G[u_2]$ and $H=G-S$.
Let $H'$ be the graph obtained from $H$ by adding an edge $u_3u_4$.
By \eqref{eqref:basic:rho} and \eqref{eqref:basic:rho2}, we have  $\rho^*_H(u_3u_4)\ge -4\cdot 11+3\cdot 16=4$, and so $\mad(H')\le \frac{8}{3}$.
Since $|V^*(H')|<|V^*(G)|$, by the minimality of $G$, $H'$ has an {\FII} \fii{F}{I_\alpha}{I_\beta}, which is also an {\FII} of $H$.
From each $u_i$ where $i\in[2]$, let $t_i, t'_i$ be 2-vertices from different pendent triangles on $u_i$.
Now, \fii{(F+S-t_1t'_1t_2t'_2)}{(I_\alpha+t_1t_2)}{(I_\beta+t_1't'_2)} is an {\FII} of $G$.

\smallskip

\noindent (ii) Suppose to the contrary that $u_1, u_2, u_3\in W_2$ and $u_4\in W_{235}$.
We use the labels as in  Figure~\ref{fig:rc-4-only}.
Let $H=G-S$, where
\[ S=\begin{cases}
\{v,u_1,u_2,u_3\}\cup N_G[u_4] & \text{if }u_4\in W_5\\
\{v,u_1,u_2,u_3,x_4,x'_4\}& \text{if  }u_4\in W_3\\
\{v,u_1,u_2,u_3,u_4\}&\text{if }u_4\in W_2.
\end{cases}\]
\begin{figure}[ht]
	\centering
  \includegraphics[scale=0.75,page=12]{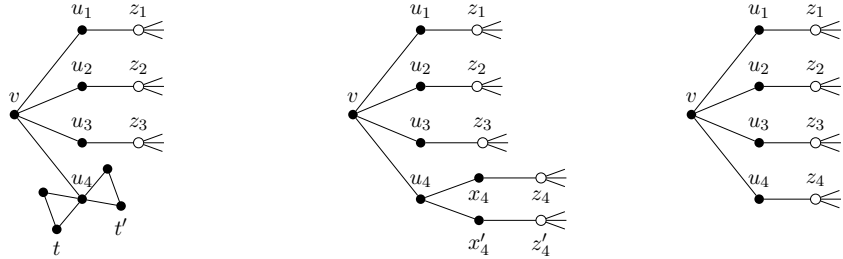}  
  \caption{An illustration of Lemma~\ref{lemma:rc-4-only2} (ii).}
  \label{fig:rc-4-only}

\end{figure}
By the minimality of $G$, $H$ has an {\FII} \fii{F}{I_\alpha}{I_\beta}.
If $u_4\in W_5$, then since \fii{(F+S-tt')}{(I_\alpha+t)}{(I_\beta+t')} is not an {\FII} of $G$,  at least two  $z_i$'s are in  $F$.
Then either
\fii{(F+S-vu_4)}{(I_\alpha+v)}{(I_\beta+u_4)} or
\fii{(F+S-vu_4)}{(I_\alpha+u_4)}{(I_\beta+v)} is an {\FII} of $G$.

Suppose that $u_4\in W_3$.
If at most one of $z_1$, $z_2$, $z_3$ is in $F$,
then
\fii{(F+S-u_4)}{I_\alpha}{(I_\beta+u_4)},
\fii{(F+S-u_4)}{(I_\alpha+u_4)}{I_\beta}, or
\fii{(F+S)}{I_\alpha}{I_\beta}
is an {\FII} of $G$.
If two of $z_1,z_2,z_3$ are in $F$,
then
\fii{(F+S-v)}{(I_\alpha+v)}{I_\beta},
\fii{(F+S-v)}{I_\alpha}{(I_\beta+v)},
\fii{(F+S-vu_4)}{(I_\alpha+v)}{(I_\beta+u_4)}, or
\fii{(F+S-vu_4)}{(I_\alpha+u_4)}{(I_\beta+v)}
is an {\FII} of $G$.

Now suppose that $u_4\in W_2$.
\begin{claim}\label{claim:rc-4-F2}
For every {\FII} \fii{F}{I_{\alpha}}{I_{\beta}} of $H$, exactly two of $z_i$'s are in $F$. 
\end{claim}
\begin{proof}
If at most one $z_i$ is in $F$,
then $G$ has an {\FII} \fii{(F+S)}{I_{\alpha}}{I_{\beta}}.
If at least three $z_i's$ are in $F$,
then either \fii{(F+ S-v)}{(I_{\alpha}+v)}{I_{\beta}} or
\fii{(F+ S-v)}{I_{\alpha}}{(I_{\beta}+v)} is  an {\FII} of $G$.
\end{proof}

By~\eqref{eqref:basic:rho2}, we  have $\rho_H^*(z_1z_2z_3z_4)\ge 4$.
\begin{claim}\label{claim:please_this_is_the_last2}\begin{itemize}
\item[\rm(a)]$\rho^*_H(z_i)\le 2$ for every $i\in [4]$ where $i\neq j$.
\item [\rm(b)]
$\rho^*_H(z_iz_j)\le 3$ for every $i,j\in [4]$ where $i\neq j$.
\item[\rm(c)] If $\rho^*_H(z_iz_j)=3$ for $i,j\in[4]$ where $i\neq j$, then
$\rho^*_H(z_iz_jz_k)=3$ for every $k\in[4]\setminus\{i,j\}$.
\item[\rm(d)]
There are no distinct $i,j,k\in[4]$ such that attaching a pendent triangle at each of $z_i, z_j, z_k$ results in a graph $H'$ satisfying $\mad(H')\le \frac{8}{3}$. 
\end{itemize}
\end{claim}

\begin{proof} In the proof of each case, we define a graph $H'$ by modifying $H$ so that $\mad(H')\le \frac{8}{3}$ and $|V^*(H')|<|V^*(G)|$.
By the minimality of $G$, $H'$ has an {\FII} \fii{F'}{I'_\alpha}{I'_\beta}.
Let $F=F'\cap V(H)$, $I_\alpha=I'_\alpha\cap V(H)$, and $I_\beta=I'_\beta\cap V(H)$.

 (a) Suppose to the contrary that $\rho^*_H(z_1)\ge 3$.
Let $H'$ be the graph obtained from $H$ by attaching $J_2$ and a pendent triangle to $z_1$.
Note that since $\rho^*_H(z_1)\ge 3$ we know $\mad(H')\le\frac{8}{3}$.
By Lemma~\ref{lema:2B2} (ii), $z_1\in F$.
By considering a pendent triangle $T_1$, together with Claim~\ref{claim:rc-4-F2}, either
\fii{(F+S-u_1)}{(I_{\alpha}+u_1)}{I_{\beta}} or
\fii{(F+S-u_1)}{I_{\alpha}}{(I_{\beta}+u_1)} is an {\FII} of $G$.

 (b) Suppose to the contrary that $\rho^*_H(z_{3}z_4)\geq4$.
Since $\rho^*_H(z_3)+\rho^*_H(z_4)\ge \rho^*_H(z_{3}z_4)$ by \eqref{eqref:basic:rho},
we may assume that $\rho^*_H(z_3)\ge 2$.
Let $H'$ be the graph obtained from $H$ by adding an edge $z_3z_4$ and attaching $J_1$ to $z_3$.
Since $\rho^*_H(z_3z_4)\ge 4$ and $\rho^*_H(z_3)\ge 2$, we know $\mad(H')\le\frac{8}{3}$.
Suppose $z_3\in F$.
By Claim~\ref{claim:rc-4-F2}, exactly one of  $z_1,z_2,z_4$ is in $F$.
If $z_4\in F$, then \fii{(F+S)}{I_{\alpha}}{I_{\beta}} is an {\FII} of $G$.
If $z_4\not\in F$, then either \fii{(F+S-u_3)}{(I_{\alpha}+u_3)}{I_{\beta}} or
\fii{(F+S-u_3)}{I_{\alpha}}{(I_{\beta}+u_3)} is an {\FII} of $G$.
Now suppose $z_3\not\in F$, say $z_3\in I_\alpha$.
By Lemma~\ref{lema:2B2} (i), by considering the center of $J_1$, we know $z_4\in F$.
Now, $G$ has an {\FII} \fii{(F+S-u_4)}{(I_{\alpha}+u_4)}{I_{\beta}}.

 (c) Suppose to the contrary that $\rho^*_H(z_3z_4)=3$ and $\rho^*_H(z_1z_3z_4)\ge 4$.
By \eqref{eqref:basic:rho}, $\rho^*_H(z_{1})+\rho^*_H(z_3z_4)\ge \rho^*_H(z_1z_3z_4)\ge 4$ and so $\rho^*_H(z_{1})\ge 1$.
Let $H'$ be the graph obtained from $H$ by adding a pendent triangle $T_1$ at $z_1$ and an edge $z_3z_4$.
If $z_4\in F$, then since \fii{(F+S)}{I_{\alpha}}{I_{\beta}} is not an {\FII} of $G$, we may assume $z_3\in I_{\alpha}$.
Now, \fii{(F+S-u_4)}{(I_{\alpha}+u_4)}{I_{\beta}} is an {\FII} of $G$.
If $z_3,z_4\not\in F$, then $z_1\in F$.
By considering a pendent triangle  $T_1$, either
\fii{(F+S-u_1)}{(I_{\alpha}+u_1)}{I_{\beta}} or
\fii{(F+S-u_1)}{I_{\alpha}}{(I_{\beta}+u_1)} is an {\FII} of $G$.

 (d) Suppose to the contrary that attaching a pendent triangle $T_i$ to each of $z_1,z_2,z_3$ results in a graph $H'$ satisfying $\mad(H')\le \frac{8}{3}$.
By Claim~\ref{claim:rc-4-F2}, we may assume $z_1\in F$ and a 2-vertex on $T_1$ is in $I_{\alpha}$.
Now, \fii{(F+S-u_1)}{(I_{\alpha}+u_1)}{I_{\beta}} is an {\FII} of $G$.
\end{proof}

If $\rho^*_H(z_i)=0$  for two integers $i\in[4]$, say $\rho^*_H(z_1)=\rho^*_H(z_2)=0$,
then by \eqref{eqref:basic:rho} and \eqref{eqref:basic:rho2},
$\rho^*_H(z_1)+\rho^*_H(z_2)+\rho^*_H(z_3z_4) \ge \rho^*_H(z_1z_2z_3z_4)\ge 4$.
This implies $\rho^*_H(z_3z_4)\ge 4$, which is a contradiction to Claim~\ref{claim:please_this_is_the_last2} (b).
Hence, we may assume that $\rho^*_H(z_i)\ge 1$ for $i\in[3]$.
If $\rho^*_H(z_1z_2), \rho^*_H(z_2z_3), \rho^*_H(z_1z_3)\leq 1$,
then $3\geq \rho^*_H(z_1z_2)+\rho^*_H(z_2z_3)+\rho^*_H(z_1z_3)\ge 2\rho^*_H(z_1z_2z_3)$, which implies  $\rho^*_H(z_1z_2z_3)=1$.
Therefore $\rho^*_H(z_4)\ge 3$, which is a contradiction to Claim~\ref{claim:please_this_is_the_last2} (a).
Hence, we may assume $\rho^*_H(z_1z_2)\ge 2$.
If $\rho^*_H(z_1z_2z_3)\le 2$, then by~\eqref{eqref:basic:rho}
\begin{eqnarray*}
&&\rho^*_H(z_1z_2z_4)\ge \rho^*_H(z_1z_2z_3z_4)+\rho^*_H(z_1z_2)-\rho^*_H(z_1z_2z_3) \ge 6-2=4,\\
&& \rho^*_H(z_1z_4),\rho^*_H(z_2z_4)\ge \rho^*_H(z_4)\ge \rho^*_H(z_1z_2z_3z_4)-\rho^*_H(z_1z_2z_3)\ge 2.
\end{eqnarray*}
Thus, attaching one pendent triangle at each $z_i$ for $i\in\{1,2,4\}$ results in a graph $H'$ with $\mad(H')\le \frac{8}{3}$, which is a contradiction to Claim~\ref{claim:please_this_is_the_last2} (d).
Hence, $\rho^*_H(z_1z_2z_3)\ge 3$.
Note that by \eqref{eqref:basic:rho}
\begin{eqnarray}\label{eq:last:please}
&&\rho^*_H(z_{2}z_{3})+\rho^*_H(z_{1}z_{3})\ge \rho^*_H(z_{1}z_{2}z_{3})+\rho^*_H(z_3)\ge 3+1=4.
\end{eqnarray}
If $\rho^*_H(z_2z_3)=2$, then  $\rho^*_H(z_1z_{3})\ge 2$ by \eqref{eq:last:please}.
Thus, attaching one pendent triangle at each $z_i$ for $i\in\{1,2,3\}$ results in a graph $H'$ with $\mad(H')\le \frac{8}{3}$, which is a contradiction to Claim~\ref{claim:please_this_is_the_last2} (d).
Hence  $\rho^*_H(z_2z_3)\neq 2$, and by symmetry,  $\rho^*_H(z_1z_3)\neq 2$.
We may assume $\rho^*_H(z_2z_3)=1$.
Note that $\rho^*_H(z_1z_3z_4)\ge \rho^*_H(z_1z_3)=3$ where the second equality is from \eqref{eq:last:please} and  Claim~\ref{claim:please_this_is_the_last2} (b).
Together with Claim~\ref{claim:please_this_is_the_last2} (d), we have $\rho^*_H(z_1z_2z_3)=3$, and therefore by~\eqref{eqref:basic:rho}
\begin{eqnarray*}
&&\rho^*_H(z_4)\ge \rho^*_H(z_{1}z_{2}z_{3}z_{4})-\rho^*_H(z_{1}z_{2}z_{3})\ge 1,\\
&& \rho^*_H(z_{1}z_{4})\ge \rho^*_H(z_{1}z_{2}z_{3}z_{4})-\rho^*_H(z_{2}z_{3})\ge 3,\\
&&\rho^*_H(z_{3}z_{4})\ge \rho^*_H(z_1z_2z_3z_4)+\rho^*_H(z_3)-\rho^*_H(z_1z_2z_3)\ge 4+1-3=2.
\end{eqnarray*}
Thus, attaching one pendent triangle at each $z_i$ for $i\in\{1,3,4\}$ results in a graph $H'$ satisfying $\mad(H')\le \frac{8}{3}$, which is a contradiction to Claim~\ref{claim:please_this_is_the_last2} (d).

\smallskip

\noindent (iii) Suppose to the contrary that $u_1,u_2\in W_2$ and $u_3, u_4\in W_{235}$.
By (i) and (ii),
we may assume that $u_3\in W_3$ and $u_4\in W_{35}$.
We use the labels as in  Figure~\ref{fig:rc-4-only2-2}.
Let $S=N_G[v]\cup N_G[u_3]\cup N_G[u_4]$.
By the minimality of $G$,  $H=G-S$ has an {\FII}.
Let $Z$ be the set of all $z_i$'s and $z'_i$'s.

\begin{figure}[ht]
	\centering
  \includegraphics[scale=0.75,page=13]{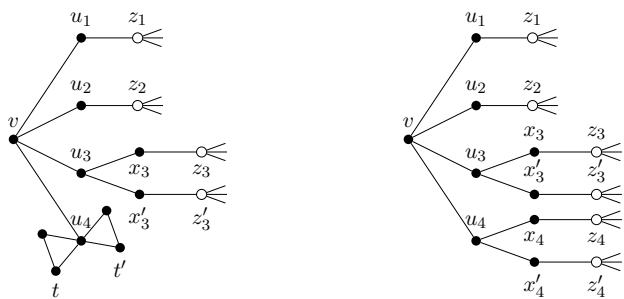} 
    \caption{An illustration for Lemma~\ref{lemma:rc-4-only2} (iii).}
  \label{fig:rc-4-only2-2}

\end{figure}

\begin{claim}\label{subclaim:rc-4}
For every {\FII} \fii{H}{I_\alpha}{I_\beta} of $H$, $Z\subset F$.
\end{claim}
\begin{proof}
Suppose to the contrary that $Z\not\subset F$.
Suppose $u_4\in W_5$.
If $z_1\not\in F$, say $z_1\in I_\alpha$, then
since neither \fii{(F+S-u_3u_4)}{(I_\alpha+u_3)}{(I_\beta+u_4)} nor
\fii{(F+S-u_3u_4)}{(I_\alpha+u_4)}{(I_\beta+u_3)}
is an {\FII} of $G$, we may assume $z_3\in I_\alpha$ and $z'_3\in I_\beta$.
Yet, \fii{(F+S-tt')}{(I_\alpha+t)}{(I_\beta+t')} is an {\FII} of $G$.
Therefore, $z_1, z_2\in F$, and since $Z\not\subset F$, we know $\{z_3, z'_3\}\not\subset F$.
Now, \fii{(F+S-vu_4)}{(I_\alpha+v)}{(I_\beta+u_4)} is an {\FII} of $G$.

Suppose that $u_4\in W_3$.
If $z_3\not\in F$, then we may assume $z_3\in I_\alpha$.
Since neither \fii{(F+S-v)}{(I_\alpha+v)}{I_\beta} nor \fii{(F+S-vu_4)}{(I_\alpha+v)}{(I_\beta+u_4)} is an {\FII} of $G$, we may assume $z_1\in I_\alpha$.
Similarly, we conclude $z_2\in I_\beta$.
Now, \fii{(F+S)}{I_\alpha}{I_\beta}, \fii{(F+S-u_4)}{(I_\alpha+u_4)}{I_\beta}, or \fii{(F+S-u_4)}{I_\alpha}{(I_\beta+u_4)} is an {\FII} of $G$.
Therefore, $z_3, z'_3, z_4, z'_4\in F$.
Since $Z\not\subset F$, we know $\{z_1, z_2\}\not\subset F$.
Now, \fii{(F+S-u_3u_4)}{(I_\alpha+u_3)}{(I_\beta+u_4)} is an {\FII} of $G$.
\end{proof}

Note that by \eqref{eqref:basic:rho2}, if $u_4\in W_5$, then
$\rho^*_H(Z)\ge -4\cdot 11 +3\cdot 16=4=|Z|$, and if $u_4\in W_3$, then
$\rho^*_H(Z)\ge -4\cdot 9 +3\cdot 14=6=|Z|$.

Suppose  $\rho^*_H(z_1z_2)\ge 3$.
Let $H'$ be the graph obtained from $H$ by adding an edge $z_1z_2$.
Then $\mad(H')\leq {8\over 3}$.
Since $|V^*(H')|<|V^*(G)|$, by the minimality of $G$, ${H'}$ has an {\FII}, which also gives an {\FII} \fii{F}{I_\alpha}{I_\beta} of $H$.
By Claim~\ref{subclaim:rc-4},  $Z\subset F$, so $G$ has an {\FII} \fii{(F+S-u_3u_4)}{(I_\alpha+u_3)}{(I_\beta+u_4)}.

Now suppose $\rho^*_H(z_1z_2)\le 2$.
Then $\displaystyle\!\!\!\!\sum_{z\in Z\setminus\{z_1,z_2\}} \!\!\!\!\!\!\!\rho^*_H(z)\ge |Z|-2$, since $\displaystyle \rho^*_H(z_{1}z_{2})+\!\!\!\!\!\!\sum_{z\in Z\setminus\{z_1,z_2\}} \!\!\!\!\!\!\!\rho^*_H(z)\ge |Z|$ by \eqref{eqref:basic:rho}.
Without loss of generality assume $\rho^*_H({z_3})\ge 1$.
Let $H'$ be the graph obtained from $H$ by attaching a pendent triangle $T$ to $z_3$.
Then $\mad(H')\leq {8\over 3}$.
Since $|V^*(H')|<|V^*(G)|$, by the minimality of $G$,   ${H'}$ has an {\FII}, which also gives an {\FII} \fii{F}{I_\alpha}{I_\beta} of $H$.
By Claim~\ref{subclaim:rc-4}, we have $Z\subset F$.
By considering the pendent triangle $T$, either  \fii{(F+S-vu_4x_3)}{(I_\alpha+x_3u_4)}{(I_\beta+v)} or
\fii{(F+S-vu_4x_3)}{(I_\alpha+v)}{(I_\beta+x_3u_4)} is an {\FII} of $G$.
\end{proof}

\begin{lemma}\label{lem:rc-6-two-pendent}
In $G$, there is no 6-vertex on two pendent triangles with a $W_{235}$-neighbor and a different $W_{25}$-neighbor. \rcp{rcp-6-two-pendent}
\end{lemma}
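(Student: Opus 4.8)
\emph{Plan of proof for Lemma~\ref{lem:rc-6-two-pendent}.} The approach is to contradict the minimality of $G$, following the template of Lemmas~\ref{lemma:rc-4-only2} and~\ref{lem:rc-w3'}. Let $v$ be the purported $6$-vertex with pendent triangles $vt_1t_2$ and $vt_3t_4$, and let $v_1,v_2$ be its remaining two neighbours, where $v_1\in W_{235}$, $v_2\in W_{25}$, and $v_1\ne v_2$. If both $v_1,v_2\in W_5$, then $G$ is the $15$-vertex graph consisting of $v$, its two pendent triangles, and the two pendent triangles of each of $v_1,v_2$; putting $v,v_1,v_2$ together with one vertex of each of the six pendent triangles into $F$ and splitting the remaining six triangle-vertices between $I_\alpha$ and $I_\beta$ gives an {\FII} of $G$, a contradiction. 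Hence at least one of $v_1,v_2$ is a $3^-$-vertex, and since a $W_{25}$-vertex is never in $W_3$ the remaining cases are: (i) $v_2\in W_5$ and $v_1\in W_{23}$; (ii) $v_1\in W_5$ and $v_2\in W_2$; (iii) $v_1\in W_{23}$ and $v_2\in W_2$.

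In each case let $S$ be obtained from $N_G[v]=\{v,t_1,t_2,t_3,t_4,v_1,v_2\}$ by adding, for whichever of $v_1,v_2$ lies in $W_5$, the four $2$-vertices of its two pendent triangles, and, if $v_1\in W_3$, the two $W_2$-neighbours of $v_1$. These extra vertices are genuinely new by the configurations already excluded (notably \rc{rc-3p}, \rc{rc-w3w3}, \rc{rc-3-w3''}, and Lemma~\ref{lem:rc-cycle-3-d4-same1}), and further coincidences among them are harmless in the extension arguments below, exactly as noted in the proof of Lemma~\ref{lem:rc-w3w3}. Write $H=G-S$ and let $Z$ be the set of boundary vertices: the $H$-neighbours of $v_1$ when $v_1\in W_2$, of the two $W_2$-neighbours of $v_1$ when $v_1\in W_3$, and of $v_2$. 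Counting the edges of $G$ incident with $S$ (always at least $8$, plus $4$ more for each neighbour of $v$ lying in $W_5$ and $3$ more when $v_1\in W_3$) and combining \eqref{eqref:basic:rho2} with submodularity \eqref{eqref:basic:rho} gives, in every case, $\rho^*_H(Z)\ge |Z|$; this is the reservoir of potential that drives the forcing gadgets.

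The core of the argument is a dichotomy, proved just like Claim~\ref{subclaim:rc-4}: for every {\FII} $(F,I_\alpha,I_\beta)$ of $H$ one has $Z\subseteq F$, since otherwise $S$ can be reinserted directly. The crucial step is that, once $v$ is placed into $F$, a counting argument on the four $2$-vertices of the two pendent triangles at $v$ --- at most one of $t_1,t_2,t_3,t_4$ can lie in $I_\alpha$ and at most one in $I_\beta$, as they are pairwise within distance two, while a triangle at $v$ cannot lie wholly in $F$ --- forces, up to relabelling, $t_1,t_3\in F$, $t_2\in I_\alpha$, $t_4\in I_\beta$, whence $v_1,v_2\in F$; propagating this through $v_1$ (through its two $W_2$-neighbours when $v_1\in W_3$, through $v_1$ itself when $v_1\in W_2$, and through its two spare pendent triangles when $v_1\in W_5$, in the manner of Lemma~\ref{lem:rc-w5}) and through $v_2$ then forces the boundary vertices into $F$, unless some intermediate vertex is free, in which case one of the few {\FII}s of $G$ obtained by distributing $v_1,v_2$ and the intermediate vertices among $F,I_\alpha,I_\beta$ works. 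When $v$ is instead placed into an $I$-class --- with all four $t_i$ in $F$ --- the same conclusion is reached more easily, and this is also what resolves degenerate cases such as two boundary vertices coinciding.

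The dichotomy is then contradicted by spending the potential $\rho^*_H(Z)\ge |Z|$, which I expect to be the main obstacle, exactly as in the closing parts of Lemmas~\ref{lem:rc-w3'} and~\ref{lemma:rc-4-only2}. Using \eqref{eqref:basic:rho} and \eqref{eqref:basic:rho2} repeatedly, one argues that one can always either find two boundary vertices $z,z'$ with $\rho^*_H(zz')\ge 3$ and add the edge $zz'$, or find a boundary vertex with $\rho^*_H(z)\ge 2$ --- or a suitable pair or triple of boundary vertices --- and attach $J_2$, $J_1$, or pendent triangles (recall these cost at most $2,1,1$ in potential, respectively), obtaining $H'$ with $\mad(H')\le\frac 83$ and $|V^*(H')|<|V^*(G)|$. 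By minimality $H'$ has an {\FII}, which restricts to an {\FII} of $H$ with $Z\subseteq F$ by the dichotomy; Lemma~\ref{lema:2B2} then forces the vertex carrying $J_2$ out of $F$, or forces the vertex carrying $J_1$ to acquire an $I$-neighbour incompatible with $Z\subseteq F$, or the pendent-triangle colours allow $S$ to be reinserted --- a contradiction in every case. The delicate point, settled by a case analysis modelled on Claims~\ref{claim:w3''-potential} and~\ref{claim:please_this_is_the_last2}, is to rule out the balanced situations in which every individual $\rho^*_H(z)$ equals $0$ or $1$; with up to four boundary vertices in case (iii) and the asymmetry imposed by the constraint $v_2\in W_{25}$, verifying that the potential always suffices is the most laborious part of the proof.
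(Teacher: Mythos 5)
The proposal heads in roughly the right direction (remove a structured set $S$, establish a claim about boundary vertices via direct reinsertion attempts, close out via the potential function), but it has a genuine error and leaves the hard part as a promissory note.

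First, the contradiction mechanism is wrong as stated. You write that ``Lemma~\ref{lema:2B2} then forces the vertex carrying $J_2$ out of $F$''; but Lemma~\ref{lema:2B2}(ii) forces the attachment vertex \emph{into} $F$, which is \emph{consistent} with the dichotomy $Z\subseteq F$ and so cannot contradict it. Likewise $J_1$ (Lemma~\ref{lema:2B2}(i)) only yields new information when the attachment vertex is already known to be in an $I$-class, so it is useless if you have already shown $z\in F$. The paper does not contradict the claim at all: its Claim~\ref{claim:z2z3} is \emph{used}, not refuted. When $u_2\in W_5$ the configuration is dispatched in one line, when $u_2\in W_2$ the set $S$ is reinserted directly, and only when $u_2\in W_3$ is any gadget needed --- a single added edge $z_2z_3$, whose role is not to violate $Z\subseteq F$ but to guarantee that replacing the edge $z_2z_3$ of the forest in $H'$ by the path through $S$ in $G$ still yields a forest. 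Your proposal misses this mechanism entirely and instead promises an elaborate $J_1/J_2/$pendent-triangle case analysis whose logic, as described, does not close.

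Second, the ``crucial step'' paragraph (placing $v$ into $F$ supposedly ``forces'' $t_1,t_3\in F$, $t_2\in I_\alpha$, $t_4\in I_\beta$, ``whence $v_1,v_2\in F$'') confuses analyzing a hypothetical {\FII} of $G$ with constructing one: all of $v,t_1,\ldots,t_4,v_1,v_2$ lie in $S$, so the extender is free to place them; nothing is forced, and the work is to exhibit a placement that succeeds. Third, the proposed dichotomy $Z\subseteq F$ for \emph{all} boundary vertices is stronger than the paper needs (only the boundary on the $W_{235}$-side matters) and you do not verify it; the potential estimate $\rho^*_H(Z)\ge|Z|$ is correct, but the subsequent ``balanced situations'' analysis --- which you yourself flag as ``the most laborious part of the proof'' --- is left entirely undone. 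In short, the proposal is substantially incomplete, and the one concrete mechanism it specifies for reaching a contradiction is based on a misreading of Lemma~\ref{lema:2B2}.
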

\begin{proof}
Suppose to the contrary that
there is a 6-vertex $v$ on exactly two pendent triangles with a $W_{25}$-neighbor $u_1$  and a different $W_{235}$-neighbor $u_2$.
If $u_2\in W_5$, then by considering an {\FII} of $G-(N_G[v]\cup N_G[u_2])$, it is easy to find an {\FII} of $G$.
Assume $u_2\in W_{23}$.
We use the labels as in  Figure~\ref{fig:rc-6p}.
\begin{figure}[ht]
	\centering
  \includegraphics[scale=0.75,page=14]{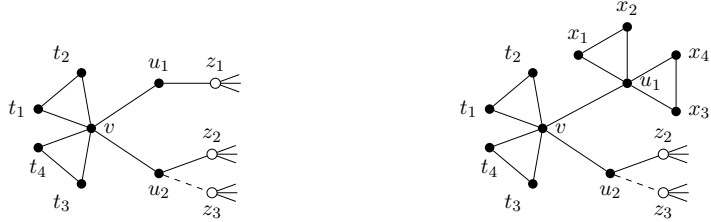} 
  \caption{An illustration of Lemma~\ref{lem:rc-6-two-pendent}.}
  \label{fig:rc-6p}

\end{figure}

If $u_1\in W_2$, then let $S=N_G[v]$, and if $u_1\in W_5$, then let $S=N_G[v]\cup N_G[u_1]$.
By the minimality of $G$, $H=G-S$ has an {\FII}.
When $u_2$ is a $2$-vertex, we ignore $z_3$ in Claim~\ref{claim:z2z3}.
\begin{claim}\label{claim:z2z3}
For every {\FII} \fii{F}{I_{\alpha}}{I_\beta} of $H$, $\{z_2,z_3\}\subset F$.
\end{claim}
\begin{proof}
Suppose to the contrary that  $\{z_2,z_3\}\not\subset F$.
Without loss of generality, assume $z_2\in I_\alpha$.
If $u_1\in W_5$, then \fii{(F+S-t_1t_3x_1x_3)}{(I_\alpha+t_1x_1)}{(I_\beta+t_3x_3)} is an {\FII} of $G$.
If $u_1\in W_2$, then since \fii{(F+S-t_1t_3)}{(I_\alpha+t_1)}{(I_\beta+t_3)} is not an {\FII} of $G$, we conclude $z_1, z_3\in F$.
Now, \fii{(F+S-v)}{I_\alpha}{(I_\beta+v)} is an {\FII} of $G$.
\end{proof}

Suppose $u_2\in W_2$. Let  \fii{F}{I_{\alpha}}{I_\beta} be an {\FII} of $H$.
By Claim~\ref{claim:z2z3}, $z_2\in F$.
If $u_1\in W_5$,
then \fii{(F+S-vu_1)}{(I_{\alpha}+u_1)}{(I_{\beta}+v)} is an {\FII} of $G$.
If $u_1\in W_2$,
then either \fii{(F+S-v)}{(I_{\alpha}+v)}{I_{\beta}} or \fii{(F+S-v)}{I_{\alpha}}{(I_{\beta}+v)} is an {\FII} of $G$.

Suppose  $u_2\in W_3$.
By \eqref{eqref:basic:rho2},
if $u_1\in W_2$ then $\rho^*_H(z_{2}z_{3})\ge 3$, and
if $u_1\in W_5$ then $\rho^*_H(z_{2}z_{3})\ge 4$.
Let $H'$ be the graph obtained from $H$ by adding an edge $z_2z_3$.
Then  $\mad(H')\leq {8\over 3}$.
Since $|V^*(H')|<|V^*(G)|$, by the minimality of $G$,  ${H'}$ has an {\FII} \fii{F}{I_\alpha}{I_\beta}, which is also an {\FII} of $H$.
By Claim~\ref{claim:z2z3}, $z_2,z_3\in F$.
If $u_1\in W_2$, then
either \fii{(F+S-v)}{(I_\alpha+v)}{I_\beta} or
\fii{(F+S-v)}{I_\alpha}{(I_\beta+v)} is an {\FII} of $G$.
If $u_1\in W_5$, then \fii{(F+S-vu_1)}{(I_\alpha+v)}{(I_\beta+u_1)} is an {\FII} of $G$.
\end{proof}

\begin{lemma}\label{lem:rc-5-one-pendent}
In $G$, there is no $5$-vertex $v$ on one pendent triangle with three $W_{235}$-neighbors where two are $W_2$-neighbors.
\rcp{rcp-5-one-pendent}
\end{lemma}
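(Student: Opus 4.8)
The proof follows the deletion-and-extension template of Sections~\ref{sec:proof:configuration} and~\ref{sec:proof:configuration2}. Write $vt_1t_2$ for the pendent triangle at $v$ (so $t_1,t_2$ are $2$-vertices), and let $u_1,u_2\in W_2$ and $u_3\in W_{235}$ be the remaining three neighbors of $v$; for a $2$-vertex $u_i$ let $z_i$ denote its neighbor other than $v$. First I would set $S=\{v,t_1,t_2,u_1,u_2\}$ together with $u_3$ and the $2$-vertices on its pendent triangle(s): nothing more when $u_3\in W_2$, its two pendent $2$-neighbors $x_3,x_3'$ when $u_3\in W_3$, and the four $2$-vertices on its two pendent triangles when $u_3\in W_5$. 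Let $H=G-S$ and let $Z$ be the set of vertices of $H$ adjacent to $S$, namely $\{z_1,z_2,z_3\}$, $\{z_1,z_2,z_3,z_3'\}$, or $\{z_1,z_2\}$ in the three cases. Counting the edges of $G$ incident with $S$ and applying~\eqref{eqref:basic:rho2} gives $\rho^*_H(Z)\ge|Z|$ throughout; the tightest case is $u_3\in W_5$, where $|S|=10$, there are $14$ such edges, and $\rho^*_H(z_1z_2)\ge -4\cdot10+3\cdot14=2$.

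The core of the argument is a structural claim: for every {\FII} $F\sqcup I_\alpha\sqcup I_\beta$ of $H$ (or of the gadget-augmented graph introduced below), the colouring fails to extend to an {\FII} of $G$ only when $Z$ sits in a very restricted position --- roughly, when the members of $Z$ lying in $F$ are confined to too few components of $F$, or when $Z$ meets both $I_\alpha$ and $I_\beta$. The point is that the pendent triangle yields a free vertex (assign one of $t_1,t_2$ to $F$ and the other to an independent class) and the $W_2$-neighbors $u_1,u_2$ are degree-$2$ vertices, hence maximally flexible: once $v$ is placed, each $u_i$ can be sent to $F$, where it becomes a leaf and so creates no cycle unless two members of $Z$ already share an $F$-component, or to the independent class not containing its $z_i$. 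When $u_3\in W_5$ or $W_3$ one likewise absorbs $u_3$ and the $2$-vertices hanging off it; here the earlier reducible configurations --- in particular [C9] and the fact that $W_3$ is an independent set (Lemma~\ref{lem:rc-w3w3}), together with easy distinctness checks on $v,u_1,u_2,u_3,z_1,z_2,\dots$ --- keep the local picture, and hence the case analysis, finite. A short greedy placement of $v,t_1,t_2,u_1,u_2,u_3$ and the deleted $2$-vertices then produces an {\FII} of $G$ whenever $Z$ avoids the forbidden position.

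To rule out the forbidden position I would run the potential attack. Splitting $\rho^*_H(Z)\ge|Z|$ via~\eqref{eqref:basic:rho} (after a subcase division on the pairwise values $\rho^*_H(z_iz_j)$ when needed, exactly as in Lemma~\ref{lem:rc-w3'}) yields one of two outcomes: either there are enough indices $i$ with $\rho^*_H(z_i)\ge1$ to attach a pendent triangle --- or $J_1$, or $J_2$ --- to each relevant $z_i$ while preserving $\mad\le\frac83$; or the potential is so small that~\eqref{eqref:basic:rho} pins down, in every {\FII} of $H$, exactly which members of $Z$ lie in $F$. In the first outcome, minimality of $G$ supplies an {\FII} of the augmented graph whose restriction to $H$ is an {\FII} of $H$ in which, by Lemma~\ref{lema:2B2} and by inspecting the attached pendent triangles, $Z$ cannot occupy the forbidden position --- contradicting the structural claim. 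In the second, the pinned positions are fed directly into the greedy extension. Either way we obtain the final contradiction with $G$ being a minimum counterexample.

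I expect the main obstacle to be the bookkeeping in the structural claim when $u_3\in W_5$ --- the deletion is largest, the bound $\rho^*_H(z_1z_2)\ge2$ is tight, and $u_3$ drags in four further $2$-vertices that must be recoloured --- and, to a lesser extent, when $u_3\in W_3$, where $u_3$ is a rigid $3$-vertex (not a flexible $2$-vertex) and brings two additional boundary vertices. Keeping $F$ acyclic while up to two of $v$'s five neighbors are pushed into it, and simultaneously keeping $I_\alpha$ and $I_\beta$ $2$-independent across the inherited colouring and all the re-added vertices, is the delicate step, and it is precisely there that Lemma~\ref{lema:2B2} and the configurations [C1]--[C10] must be leaned on to keep the analysis bounded.
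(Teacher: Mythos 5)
Your delete-and-extend framework, and your choice of the deletion set $S$ (including $N_G(u_3)$ when $u_3\in W_5$, the two $2$-neighbors of $u_3$ when $u_3\in W_3$, etc.), match the paper's setup. However, you then plan to invoke the full potential-function/gadget machinery of Lemmas~\ref{lem:rc-w3''}, \ref{lem:rc-w3'} and~\ref{lemma:rc-4-only2} in order to ``rule out a forbidden position'' of the boundary set $Z$. This is where the proposal goes astray: for this lemma there is no forbidden position, and no gadget attachment is needed.

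The paper's proof is a pure case analysis on where $z_1,z_2$ (the boundary vertices of the two $W_2$-neighbors $u_1,u_2$) and, in the $W_3$ case, $u_3$, land under an {\FII} of $H=G-S$. The observation you are missing is that failure of one extension \emph{pins down} enough of the placement to make another extension work. Concretely, putting $v$ into $I_\alpha$ (resp.\ $I_\beta$) with the rest of $S$ in $F$ can only fail if some $z_i$ lies in $I_\alpha$ (resp.\ $I_\beta$); when both of these fail, $z_1,z_2$ avoid $F$ (or $Z$ meets both classes, so at most one boundary vertex is in $F$), and then putting $v$ into $F$ with one of $t_1,t_2$ in an independent class yields a forest, because $u_1,u_2$ become leaves of $G[F]$. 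So every {\FII} of $H$ extends directly, there is nothing left to rule out, and the bound $\rho^*_H(z_1z_2)\ge 2$ you computed is never used. Your second paragraph (the ``structural claim'' about forbidden positions) therefore asserts something that is not established in your sketch and is in fact vacuous: the residual cases you propose to kill with attached triangles and $J_1$/$J_2$ gadgets are already covered by the third extension. The heavy machinery you envision is appropriate for \rc{rc-3-w3''}, \rc{rc-3-w3'}, and \rcp{rcp-4-only2}, but not here --- \rcp{rcp-5-one-pendent} is one of the ``easy'' reducibility lemmas, closer in spirit to Lemma~\ref{lem:rc-6-two-pendent}.
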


\begin{proof}
Let $v$ be a $5$-vertex on one pendent triangle
with three $W_{235}$-neighbors where two are $W_2$-neighbors.
We use the labels as in  Figure~\ref{fig:rcp-5p}.
Let $H=G-S$, where  \[ S=\begin{cases}
\{v,t_1,t_2,u_1,u_2,u_3\}\cup N_G(u_3) & \text{if }u_3\in W_5\\
\{v,t_1,t_2,u_1,u_2,u_3\}& \text{if  }u_3\in W_2\\
\{v,t_1,t_2,u_1,u_2,x_3,x_4\}&\text{if }u_3\in W_3.
\end{cases}\]\begin{figure}[ht]
	\centering
  \includegraphics[scale=0.75,page=15]{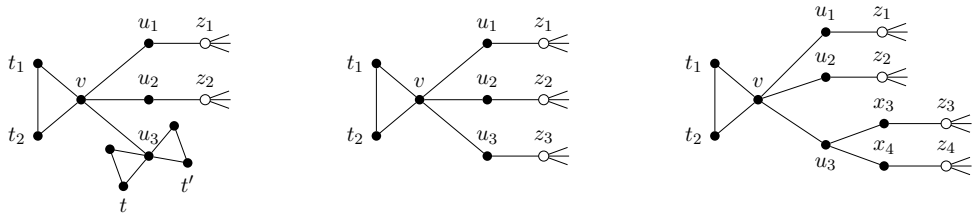} 
    \caption{An illustration for Lemma~\ref{lem:rc-5-one-pendent}.}
  \label{fig:rcp-5p}

\end{figure}
By the minimality of $G$, $H$ has an {\FII} \fii{F}{I_\alpha}{I_\beta}.
If $u_3\in W_5$, then
since neither \fii{(F+S-vu_3)}{(I_\alpha+v)}{(I_\beta+u_3)} nor
\fii{(F+S-vu_3)}{(I_\alpha+u_3)}{(I_\beta+v)} is an {\FII} of $G$,
we have $z_1,z_2\not\in F$.
Thus, $G$ has an {\FII}  \fii{(F+S-t_1tt')}{(I_\alpha+t)}{(I_\beta+t't_1)}.
If $u_3\in W_2$, then
since neither  \fii{(F+S-v)}{(I_\alpha+v)}{I_\beta}, nor
\fii{(F+S-v)}{I_\alpha}{(I_\beta+v)}
is an {\FII} of $G$, we have
  $z_1,z_2\not\in F$.
  Thus, $G$ has an {\FII}  \fii{(F+S-t_1)}{(I_\alpha+t_1)}{I_\beta}.

Now suppose that $u_3\in W_3$.
If $z_i\in F$ for some $i\in [2]$, then
we may assume that $z_1,z_2\not\in I_\alpha$ and so either
\fii{(F+S-v)}{(I_\alpha+v)}{I_\beta} or
\fii{(F+S-vu_3)}{(I_\alpha+v)}{(I_\beta+u_3)} is an {\FII} of $G$.
If $z_1,z_2\not\in F$, then
either
\fii{(F+S-t_1)}{(I_\alpha+t_1)}{I_\beta} or
\fii{(F+S-t_1u_3)}{(I_\alpha+t_1)}{(I_\beta+u_3)} is an {\FII} of $G$.
\end{proof}

\begin{lemma}\label{lem:rc-cycle-3-d4-same2}
In $G$, there is no $W_3$-vertex $u$ with a $3$-neighbor such that a $2$-neighbor of $u$ has only $3^-$-neighbors.
\end{lemma}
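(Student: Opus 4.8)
The plan is to use the deletion-and-extension method of this section together with a short potential computation. Suppose for contradiction such a vertex $u$ exists; write $N_G(u)=\{w_1,w_2,v\}$, where $w_1,w_2$ are the $2$-neighbours of $u$ and $v$ is the $3$-neighbour. Both $w_1$ and $w_2$ lie in $W_2$, since otherwise $u$ would sit on a pendent triangle, contradicting Lemma~\ref{lem:rc-3}. Say $w_1$ is the $2$-neighbour all of whose neighbours have degree at most $3$, and let $z_1$ (the neighbour of $w_1$ other than $u$) and $z_2$ (the neighbour of $w_2$ other than $u$) be the relevant outer vertices. If $z_1$ were a $2$-vertex then $w_1z_1$ would be an edge between two $2$-vertices, which by Lemma~\ref{lem:rc-22-c} would put $u$ on a pendent triangle, again contradicting Lemma~\ref{lem:rc-3}; hence $z_1$ is a $3$-vertex, and likewise $z_2$ is a $3^+$-vertex. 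By Lemma~\ref{lem:rc-w3''} the $3$-vertex $v$, having the $W_3$-neighbour $u$, has no $2$-neighbour, so the other two neighbours $v_1,v_2$ of $v$ are $3^+$-vertices, and by Lemma~\ref{lem:rc-w3'} neither lies in $W_3$. Finally $v,z_1,z_2$ are pairwise distinct: $z_i=v$ would make $w_i$ a $2$-neighbour of $v$, contradicting Lemma~\ref{lem:rc-w3''}, and $z_1=z_2$ would create the $4$-cycle $uw_1z_1w_2$, which is forbidden by Lemma~\ref{lem:rc-cycle-3-d4-same1}(ii).

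Now set $S=\{u,w_1,w_2\}$ and $H=G-S$. The edges of $G$ incident with $S$ are precisely $uw_1,uw_2,uv,w_1z_1,w_2z_2$, so \eqref{eqref:basic:rho2} yields $\rho^*_H(vz_1z_2)\ge -4\cdot 3+3\cdot 5=3$, and deleting $S$ strictly decreases $|V^*|$ since $u,w_1,w_2\in V^*(G)$. I would split into two cases. If one of $v,z_1,z_2$, say $x$, has $\rho^*_H(x)\ge 2$, attach $J_2$ to $x$: the resulting graph $H'$ has $\mad(H')\le\frac{8}{3}$ and $|V^*(H')|<|V^*(G)|$, so by minimality it has an {\FII}, whose restriction to $V(H)$ is an {\FII} $(F,I_\alpha,I_\beta)$ of $H$ with $x\in F$ by Lemma~\ref{lema:2B2}(ii); the extension step below then applies. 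Otherwise $\rho^*_H(v)=\rho^*_H(z_1)=\rho^*_H(z_2)=1$ by \eqref{eqref:basic:rho}, so attaching one pendent triangle to each of $v,z_1,z_2$ again produces an admissible smaller graph, and its {\FII}, restricted to $V(H)$, is an {\FII} $(F,I_\alpha,I_\beta)$ of $H$ with the extra property (exploited exactly as in the proof of Lemma~\ref{lem:rc-22-c}) that whenever one of $v,z_1,z_2$ lies in $F$ it has a colour $\gamma\in\{\alpha,\beta\}$ with no $\gamma$-coloured neighbour in $H$.

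For the extension, I would place $u,w_1,w_2$ back by a short case analysis on the colour of $v$ and on whether $z_1,z_2\in F$. When $v\notin F$ one sets $u\in F$ (which is isolated in $F+u$) and adds each $w_i$ to $F$; this creates no cycle unless $z_1,z_2$ lie in a common tree of $F$, in which exceptional situation one instead uses the pendent-triangle datum at the relevant $z_i$ to place $w_i$ in the unblocked $I$-class. When $v\in F$ one puts $w_1,w_2\in F$ and $u$ into whichever of $I_\alpha,I_\beta$ meets none of $\{z_1,z_2,v_1,v_2\}$, using the private colours of $z_1,z_2$ (and the fact that $v_1,v_2\notin W_3$) to free one side if both are initially blocked. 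In each case the outcome is an {\FII} of $G$, contradicting the choice of $G$.

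The crux — and the place where a real proof must work hard — is making the last two sentences precise. The extension genuinely fails in the configuration where $v$ and one of $v_1,v_2$ receive opposite $I$-colours while $z_1$ and $z_2$ lie in the same tree of $F$ with clashing private colours; excluding this forces one to combine the pendent-triangle gadgets on $z_1,z_2$ with the hypothesis that $z_1$ is only a $2$-vertex of $H$ and hence recolourable, and to pin down exactly which colourings of $\{v,z_1,z_2,v_1,v_2\}$ a given attachment forces — the same delicate bookkeeping (and the same use of intermediate ``candidate colour'' and potential claims) that drives the proofs of Lemmas~\ref{lem:rc-w3''} and~\ref{lem:rc-w3'}.
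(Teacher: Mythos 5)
Your framework (delete $S=\{u,w_1,w_2\}$, bound $\rho^*_H$, attach $J_2$ or pendent triangles to shrink $|V^*|$, then re‑extend) is the same as the paper's, and your preliminary observations — both $w_i\in W_2$, $z_1$ is a $3$-vertex, $z_2$ is a $3^+$-vertex, the three outer vertices are distinct, and $\rho^*_H(vz_1z_2)\ge 3$ — are all correct. The gap, which you yourself flag in your final paragraph, is that the extension step is never actually carried out, and without it the potential dichotomy proves nothing. Concretely, attaching $J_2$ in your Case~1 only forces the chosen vertex $x$ into $F$; it is not a contradiction by itself, so ``the extension step below then applies'' is doing all the work, and the step as written fails exactly in the configurations you describe (both $I$-classes blocked near $u$, or $z_1,z_2$ in a common tree of $F$). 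Likewise in Case~2 you only have single pendent triangles, and a pendent triangle at a vertex in $F$ does not in general give that vertex a ``private'' colour $\gamma$ with no $\gamma$-coloured neighbour in $H$ (the two triangle vertices may land in $I_\alpha$ and $I_\beta$, blocking both sides), so the invoked ``extra property'' is stronger than what the gadget delivers.

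What the paper does, and what is missing from your write-up, is a structural claim proved \emph{before} any potential estimates: for \emph{every} {\FII} $F\sqcup I_\alpha\sqcup I_\beta$ of $H$, one has $v,z_2\in F$ and $z_1\not\in F$ (in your labelling; this is the paper's Claim~\ref{claim:c'2-claim}). Its proof is a case analysis over $|\{v,z_1,z_2\}\cap F|$ using the degree hypotheses on $z_1$ (a $3^-$-vertex of $G$, hence degree at most $2$ in $H$) and on $v$ (a $3$-vertex). Once this claim is in hand, attaching $J_2$ at $z_1$ gives an immediate contradiction via Lemma~\ref{lema:2B2}(ii), which yields $\rho^*_H(z_1)\le 1$; a single pendent triangle at $z_2$ then forces $\rho^*_H(z_2)=0$ (again by the claim, not by the loose ``private colour'' heuristic); so $\rho^*_H(v)\ge 2$, and attaching \emph{two} pendent triangles at $v$ pins $v_1,v_2\in F$, after which the {\FII} of $G$ is explicit. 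Your proposal identifies the right gadgets and even the right problem spot, but it tries to run the potential split first and leave the coloring analysis for an unspecified ``extension step''; the paper's order (structural claim first, potentials second) is what makes each attachment yield a clean contradiction rather than a new subcase. As it stands, your argument does not close, and by your own account the hard case remains open.
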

\begin{proof}
Suppose to the contrary that there is a vertex $u\in W_3$ with a $3$-neighbor $z_1$ and a 2-neighbor
$x_2$ with only $3^-$-vertices.
We use the label as in  Figure~\ref{fig:sc2:lem2}.
By Lemmas
~\ref{lem:rc-w3''}
and~\ref{lem:rc-cycle-w3-w2},
 all $z_i$'s are distinct.
Let $S=\{u,x_2,x_3\}$, and $H=G-S$.
\begin{figure}[ht]
	\centering
  \includegraphics[scale=0.75,page=16]{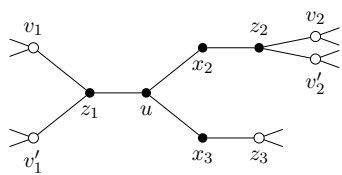} 
\caption{An illustration for Lemma~\ref{lem:rc-cycle-3-d4-same2}}
  \label{fig:sc2:lem2}

\end{figure}

\begin{claim}\label{claim:c'2-claim}
For every {\FII} \fii{F}{I_\alpha}{I_\beta} of $H$,
$z_1,z_3\in F$ and $z_2\not\in F$.
\end{claim}
\begin{proof}
If at most one $z_i$ is in $F$, then
\fii{(F+S)}{I_\alpha}{I_\beta} is an {\FII} of $G$.
Suppose that $Z\subset F$.
Since neither
\fii{(F+x_2x_3)}{(I_\alpha+u)}{I_\beta} nor
\fii{(F+x_2x_3)}{I_\alpha}{(I_\beta+u)}
is an {\FII} of $G$, we may assume $v_1\in I_\alpha$ and $v'_1\in I_\beta$.
Since \fii{(F+S)}{I_\alpha}{I_\beta} is not an {\FII} of $G$, either $v_2$ or $v'_2$ is in $F$.
Now, either
\fii{(F+ux_3)}{(I_\alpha+x_2)}{I_\beta} or
\fii{(F+ux_3)}{I_\alpha}{(I_\beta+x_2)} is an {\FII} of $G$.
Hence, exactly two of $z_i$'s are in $F$.
Suppose to contrary that $z_2\in F$.
Since none of \fii{(F+S)}{I_\alpha}{I_\beta},
\fii{(F+ux_3)}{(I_\alpha+x_2)}{I_\beta},
or \fii{(F+ux_3)}{I_\alpha}{(I_\beta+x_2)}
is an {\FII}  of $G$,
we may assume that $z_1\in I_\alpha$, $v_2\in I_\beta$, and $v'_2\in F$.
Since
\fii{(F+x_2x_3)}{I_\alpha}{(I_\beta+u)}
is not an {\FII}  of $G$, either $v_1$ or $v'_1$ is in $I_\beta$.
Now, \fii{(F+z_1x_2x_3)}{(I_\alpha+u-z_1)}{I_\beta}
is an {\FII} of $G$.
\end{proof}

Suppose $\rho^*_H(z_2)\ge 2$.
Let $H'$ be the graph obtained from $H$ by attaching $J_2$ to $z_2$.
Since $|V^*(H')|< |V^*(G)|$, by the minimality of $G$, $H'$ has an {\FII} \fii{F'}{I'_\alpha}{I'_\beta}.
By Lemma~\ref{lema:2B2} (ii), $z_2\in F'$, which is a contradiction to Claim~\ref{claim:c'2-claim}.
Hence, $\rho^*_H(z_2)\le 1$.

Since $\rho^*_H(z_1)+\rho^*_H(z_2)+\rho^*_H(z_3)\ge 3$ by \eqref{eqref:basic:rho} and \eqref{eqref:basic:rho2},
we have $\rho^*_H(z_1)+\rho^*_H(z_3)\ge 2$.
Suppose $\rho^*_H(z_3)\ge 1$.
Let $H'$ be the graph obtained from $H$ by attaching a pendent triangle $T$ to $z_3$.
Then $H'$ has an {\FII}, which also gives an {\FII} \fii{F}{I_\alpha}{I_\beta} of $H$.
By Claim~\ref{claim:c'2-claim}, $z_3\in F$. By considering a 2-vertex of $T$ not in $F$, we know
 either
\fii{(F+ux_2)}{(I_\alpha+x_3)}{I_\beta}
or
\fii{(F+ux_2)}{I_\alpha}{(I_\beta+x_3)}
is an {\FII} of $G$.
Hence, $\rho^*_H(z_3)=0$, and therefore $\rho^*_H(z_1)\ge 2$.

Now, let $H'$ be the graph obtained from $H$ by attaching two pendent triangles $T_1$ and $T_2$ to $z_1$.
Then $H'$ has an {\FII}, which also gives an {\FII} \fii{F}{I_\alpha}{I_\beta} of $H$.
By Claim~\ref{claim:c'2-claim}, $z_1\in F$.
By considering 2-vertices on $T_1,T_2$, we know that $v_1,v'_1\in F$.
Hence, either
\fii{(F+x_2x_3)}{(I_\alpha+u)}{I_\beta}
or \fii{(F+x_2x_3)}{I_\alpha}{(I_\beta+u)}
is an {\FII} of $G$.
\end{proof}

\begin{lemma} \label{lem:rc-cycle-3-d4}
In $G$, there is no cycle $C$ consisting of $(V_3\cup W_4)$-vertices such that every $V_3$-vertex on $C$ has a $W_{23}$-neighbor. \rcp{rcp-cycle-3-d4}
\end{lemma}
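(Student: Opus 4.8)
The plan is to argue by contradiction with the minimal counterexample $G$, following the template used throughout Section~\ref{sec:proof:configuration2}: delete a carefully chosen vertex set $S$ meeting the cycle $C$, invoke minimality to get an {\FII} of a suitably modified graph, and then show that the coloring extends back to $G$. The natural choice is to take $S$ to be all of $V(C)$ together with the $2$-vertices hanging off the $W_3$-vertices and $W_4$-vertices on $C$; more precisely, for each $W_3$-vertex $v$ on $C$ remove $v$ and its two pendent $2$-neighbors, and for each $W_4$-vertex remove it together with the two $2$-vertices of its pendent triangle. Write $Z$ for the set of ``exit'' vertices: the external $W_{23}$-neighbors attached to the $V_3$-vertices on $C$ (one per $V_3$-vertex, which exists by hypothesis), together with the vertices to which the external edges of the $W_4$- and $W_3$-vertices point. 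First I would record, via \eqref{eqref:basic:rho2}, a lower bound on $\rho^*_H(Z)$ (roughly $3$ per $W_3$-vertex and comparable amounts for the other cycle vertices), and prove a \textsc{Claim} analogous to Claim~\ref{claim:w2-w3-cycle}: in any {\FII} of $H$, the behavior of the $Z$-vertices is severely constrained — either all of them lie in $F$, or none do — because otherwise one can propagate a two-coloring around the cycle (alternating the pendent $2$-vertices and the cycle vertices between $F$, $I_\alpha$, $I_\beta$) exactly as in Lemmas~\ref{lem:rc-cycle-w3-w2} and~\ref{lem:rc-cycle-3-d4-same1}.

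The second main step is the ``forcing'' dichotomy. Exactly as in Claim~\ref{claim:only-w2-w3}, I would show that if $H'$ is obtained from $H$ by attaching a pendent triangle to some $z\in Z$, then any {\FII} of $H'$ restricts to an {\FII} of $H$ with $Z\cap F=\emptyset$; while if $H'$ is obtained by attaching $J_1$ to some $z\in Z$ (which forces $z$ into $F$ and gives $z$ a neighbor in each $I_\gamma$, by Lemma~\ref{lema:2B2}(i)), then any {\FII} restricts with $Z\subset F$. Combining these two incompatible conclusions is the engine of the contradiction: if I can find a vertex $z_i$ with $\rho^*_H(z_i)\ge 2$, I attach \emph{both} a pendent triangle and $J_1$ to it, keeping $\mathrm{mad}\le \tfrac83$, and get $Z\cap F=\emptyset$ and $Z\subset F$ simultaneously. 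If instead every $\rho^*_H(z_i)\le 1$, then the potential bound forces $\rho^*_H(z_i)=1$ for all $i$ and $\rho^*_H(z_iz_j)\ge 2$ for some pair (again by submodularity \eqref{eqref:basic:rho}), so I attach a pendent triangle to $z_i$ and $J_1$ to $z_j$ with $\mathrm{mad}$ still bounded, and reach the same contradiction.

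The technical bookkeeping I expect to be the main obstacle is handling the mixed structure of $C$: unlike Lemma~\ref{lem:rc-cycle-w3-w2} where $C$ alternates $W_2$ and $W_3$, here $C$ may intersperse $V_3$-vertices (each with a private $W_{23}$-neighbor off the cycle), $W_4$-vertices (each carrying a pendent triangle and one external edge), and possibly $W_3$-vertices whose two $2$-neighbors must also be deleted. I would first normalize: using \rc{rc-w3w3}, \rc{rc-3-w3''}, \rc{rc-3-w3'}, \rc{rc-4p-w2w3w5}, \ra{ra-4p-2} and \ra{ra-3p}, rule out as many of these cases as possible — in particular, consecutive $W_4$-vertices are impossible by \rc{rc-4p-w2w3w5}, and a $V_3$-vertex on $C$ has both cycle-neighbors in $V_3\cup W_4$ by \ra{ra-3p}, so in fact $C$ consists only of $V_3$- and $W_4$-vertices with each $V_3$-vertex contributing exactly one external $W_{23}$-neighbor to $Z$ and each $W_4$-vertex contributing one external neighbor to $Z$. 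This pares $Z$ down so that $|V(S)|$ and the edge count are easy to tabulate, after which the potential estimates and the triangle/$J_1$-attachment argument run exactly parallel to the cycle lemmas already proved; the only genuinely new care is checking that the various extension moves (adding the deleted pendent $2$-vertices and cycle vertices back to $F$, $I_\alpha$, $I_\beta$) are consistent around the whole cycle, which I would phrase as a single propagation argument as in the proof of Claim~\ref{claim:w2-w3-cycle}.
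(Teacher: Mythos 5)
Your plan transplants the template of Lemma~\ref{lem:rc-cycle-w3-w2} onto this configuration, but the central step of that template fails here, and the paper's actual proof is structured around avoiding exactly that failure.

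The linchpin of your proposal is the ``all-or-none'' claim: in every {\FII} of $H$, either $Z\subset F$ or $Z\cap F=\emptyset$. This \emph{is} provable for the pure $W_2$--$W_3$ alternating cycle of Lemma~\ref{lem:rc-cycle-w3-w2}, where the exterior neighborhood is structurally uniform and the cycle vertices are 2- and 3-vertices with no further appendages. In Lemma~\ref{lem:rc-cycle-3-d4} the exterior structure is heterogeneous: an $X_2$-vertex on $C$ contributes a hanging path $u_i$--$v_i$--$z_i$, an $X_3$-vertex contributes a hanging tree two levels deep with a pair $z_i,z'_i$, and a $W_4$-vertex contributes only its pendent triangle (no $z$-vertex at all). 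The paper's proof tellingly does not assert any dichotomy for the $z$-status: it immediately defines $X_j^F=\{u_i\in X_j\mid z_i,z'_i\in F\}$ and $X_j^{\alpha\beta}=X_j\setminus X_j^F$, explicitly allowing arbitrary mixed patterns in $Z$ (including the splitting of a pair, $z_i\in F$ but $z'_i\notin F$). The real claim proved (Claim~\ref{claim:C'2:Q:notQ}) is far weaker --- that \emph{some} $u_i$ lies in $X_3^F\cup W_4$ --- and the heart of the proof is a greedy two-step algorithm that walks around the cycle, assigning each $u_i$ and $v_i$ to $F/I_\alpha/I_\beta$ on the fly, exploiting the $W_4$-vertices and $X_3^F$-vertices as ``break points''. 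None of that is captured by a potential-function plus triangle/$J_1$-attachment argument, and without the all-or-none claim your forcing dichotomy (your analogue of Claim~\ref{claim:only-w2-w3}) has no foundation: if the {\FII} of $H'$ restricts to an {\FII} of $H$ with $Z$ partly in $F$ and partly not, nothing you have said forces a contradiction.

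Two further issues: (a) your normalization step cites \ra{ra-3p} and \ra{ra-4p-2}, but those are derived in Section~\ref{sec:outline:main} \emph{using} \rcp{rcp-cycle-3-d4}, so invoking them here would be circular --- fortunately they are also redundant, since the hypothesis already says $V(C)\subset V_3\cup W_4$; and (b) your plan does not address the possibility that the $v_i$'s, $t_i$'s, $t'_i$'s are not pairwise distinct. The paper devotes roughly half the proof to this degenerate case, using Lemmas~\ref{lem:rc-cycle-3-d4-same1} and~\ref{lem:rc-cycle-3-d4-same2} to find a shorter cycle $D$ with non-degenerate appendages, and then running a modified algorithm along $D$. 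Any complete proof must handle this, and it is not a routine extension of the generic case.
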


\begin{proof}
Suppose to the contrary that there is such a cycle $C:u_1u_2\ldots u_k$.
For $j\in\{2,3\}$, let \[X_j=\{u\in V(C)\cap V_3\mid \text{the neighbor of }u\text{ not on }C\text{ is a $W_j$-vertex}\}.\]
We use the labels as in the left figure of  Figure~\ref{fig:sc2:lem3}; in particular, we label the neighbors of $(X_2\cup X_3)$-vertices and their neighbors. We first consider the case where all of the $v_i$'s, $t_i$'s, and $t'_i$'s are distinct. The other case when some vertices are identical is presented afterwards.
\begin{figure}[ht]
	\centering
  \includegraphics[scale=0.75,page=17]{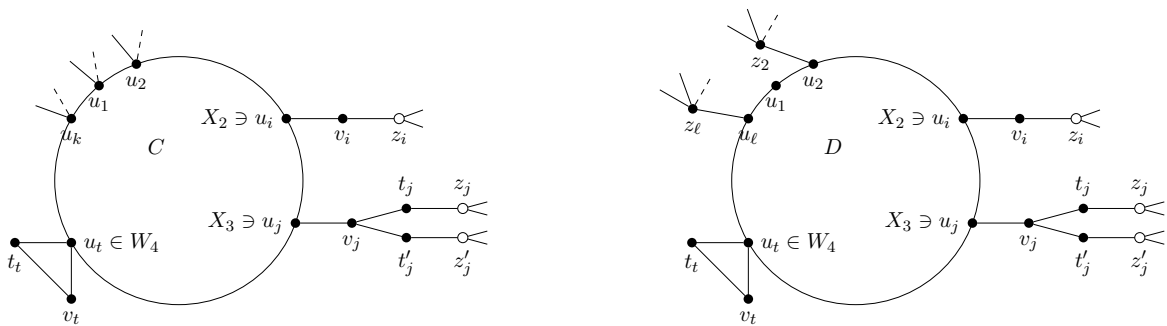} 
\caption{An illustration for Lemma~\ref{lem:rc-cycle-3-d4}}
  \label{fig:sc2:lem3}

\end{figure}

Suppose that all of the $v_i$'s, $t_i$'s, and $t'_i$'s are distinct. Let $V$ be the set of all $v_i$'s,  $T$ be the set of all $t_i$'s and $t'_i$'s, and $Z$ be the set of all $z_i$'s and $z'_i$'s. Let $S=V(C)\cup V\cup T$ and $H=G-S$.

\begin{claim}\label{claim:5cycle}
If $k=5$, then $V(C)\not\subset X_2$. 
\end{claim}

\begin{proof}
Suppose to the contrary that $k= 5$ and $V(C)=X_2$.
Since $\sum_{i}\rho_H^*(z_i)\ge -10\cdot 4+15\cdot3=5 $ by \eqref{eqref:basic:rho} and \eqref{eqref:basic:rho2}, we may assume $\rho_H^*(z_1)\ge 1$.
Let $H'$ be the graph obtained from $H$ by attaching a pendent triangle $T_1$ to $z_1$.
By the minimality of $G$, $H'$ has an {\FII}, which also gives an {\FII} \fii{F}{I_\alpha}{I_\beta} of $H$.
If $Z\subset F$, then by considering a 2-vertex on $T_1$,  either \fii{(F+S-v_1u_2u_4)}{(I_\alpha+u_2)}{(I_\beta+v_1u_4)} or
\fii{(F+S-v_1u_2u_4)}{(I_\alpha+v_1u_4)}{(I_\beta+u_2)} is an {\FII} of $G$.
If $z_i\not\in F$ for some $i$, then it is easy to find a  partition $Y_0,Y_1,Y_2$ of $V(C)\setminus\{u_i\}$ such that  \fii{(F+V+u_i+Y_0)}{(I_\alpha+Y_1)}{(I_\beta+Y_2)} is an {\FII} of $G$.
\end{proof}

By the minimality of $G$,   $H$ has an {\FII} \fii{F}{I_{\alpha}}{I_\beta}.
For $j\in\{2,3\}$, let  $X_{j}^F=\{ u_i\in X_j\mid z'_i, z_i \in F\},$ and let
$X_{j}^{\alpha\beta}=X_j \setminus X_{j}^{F}$.

\begin{claim}\label{claim:C'2:Q:notQ}
There exists an $i\in[k]$ such that $u_i\in X_{3}^{F}\cup W_4$.
\end{claim}
\begin{proof}
Suppose to the contrary that for every $i$, $u_i\in X_2\cup X_3^{\alpha\beta}$.
If $k\le 4$, then it is not hard to find a partition $Y_1,Y_2,Y_3$ of $V(C)$ such that \fii{(F+T+V+Y_1)}{(I_{\alpha}+Y_2)}{(I_{\beta}+Y_3)} is an {\FII} of $G$.
Assume  $k\ge5$.

If $u_i\in   X_2^{\alpha\beta}$ for every $i$,
then we may assume $z_1 \in I_\alpha$, and so \fii{(F+S-u_1)}{I_{\alpha}}{(I_{\beta}+u_1)} is an {\FII} of $G$.
Hence, $u_i\not\in X_2^{\alpha\beta} $ for some $i$, so $u_i\in X_2^{F}\cup X_3^{\alpha\beta}$.
We have two cases: (1)  $u_i\in  X_2^{\alpha\beta}$  and $u_j\in X_2^{F}\cup X_3^{\alpha\beta}$  for some $i,j$, and (2) $u_i\in X_2^{F}\cup X_3^{\alpha\beta}$  for every $i$.

\medskip

\noindent{(Case 1)}
Without loss of generality,  assume $u_k\in  X_2^{\alpha\beta}$ and $u_1\in X_2^{F}\cup X_3^{\alpha\beta}$.
We first find a partition of $V(G)$ by performing the following algorithm.
First, add all vertices of $X_2^{\alpha\beta}\cup T\cup V$ to $F$, and add $u_1$ to $I_{\alpha}$.
For $i\in [k-2]$, if $u_1,\ldots, u_{i}$ are determined, but $u_{i+1}$ is not yet, then do the following:
\begin{center}
If $u_i\not\in F$, then add $u_{i+1}$ to $F$.
Otherwise, for $\gamma\in \{\alpha,\beta\}$ satisfying $u_{i-1} \not\in  I_\gamma$, add $u_{i+1}$ to $I_\gamma$.
\end{center}

Note that by the algorithm, $u_k, u_2\in F$.
Since the resulting partition \fii{F}{I_{\alpha}}{I_\beta} is not an {\FII} of $G$, $u_{k-1}\in I_\alpha$, and therefore $u_{k-2}\in F$.
Since \fii{(F+u_{k-1})}{(I_\alpha-u_{k-1})}{I_\beta} must not be an {\FII} of $G$, $u_{k-2}\in X_2^{F}\cup X_3^{\alpha\beta}$.
Also, since \fii{F}{(I_\alpha-u_{k-1})}{(I_\beta+u_{k-1})}
is not an {\FII} of $G$, this implies $u_{k-3}\in I_\beta$, and therefore $u_{k-4}\in F$.
Note that this implies $k\geq 6$.
Now, \fii{(F-u_{k-2}+u_{k-1})}{(I_\alpha-u_{k-1}+u_{k-2})}{I_\beta} is an {\FII} of $G$.

\medskip

\noindent{(Case 2)}
Suppose $u_j\in X_2^{F}\cup X_3^{\alpha\beta}$  for every $j$.
If $k=5$, then by Claim~\ref{claim:5cycle}, we may assume that $u_1\in X_3^{\alpha\beta}$, and therefore
\fii{(F+S-u_2u_4)}{(I_\alpha+u_2)}{(I_\beta+u_4)},
\fii{(F+S-v_1u_2u_4)}{(I_\alpha+u_2)}{(I_\beta+v_1u_4)}, or
\fii{(F+S-v_1u_2u_4)}{(I_\alpha+v_1u_4)}{(I_\beta+u_2)} is an {\FII} of $G$.

Assume $k\neq 5$.
For $a\in\{0,1,2\}$, let  $Y_a=\{u_i\mid i\equiv a \pmod{3}\}$.
Then \fii{(F+T+V+Y_0)}{(I_{\alpha}+Y_1)}{(I_{\beta}+Y_2)} is an {\FII} of $G$ where $Y_0,Y_1,Y_2$ is a partition of $V(C)$ defined as the following:
(i) if $k\equiv 0\pmod 3$, then
no modifications to the $Y_a$'s;
(ii) if $k\equiv 1\pmod 3$, then modify the $Y_a$'s so that the last three vertices satisfy $u_{k-2},u_k\in Y_0$ and $u_{k-1}\in Y_2$;
(iii) if $k\equiv 2\pmod 3$, then modify the $Y_a$'s so that the last seven vertices satisfy $u_{k-6},u_{k-4},u_{k-2},u_k\in Y_0$, $u_{k-3}\in Y_1$, and $u_{k-1},u_{k-5}\in Y_2$.
\end{proof}

If $u_i\in X_3^{F}\cup W_4$ for every $i$, then \fii{(F+T+V(C)-u_1)}{(I_{\alpha}+V)}{(I_{\beta}+u_1)} is an {\FII} of $G$.
Hence, we assume that $u_i\not\in  X_3^{F}\cup W_4$ for some $i$.
Together with Claim~\ref{claim:C'2:Q:notQ}, we may assume $u_k\in X_2\cup X_3^{\alpha\beta}$ and $u_1\in X_3^{F}\cup W_4$.
For simplicity, let  $Q=\{v_i\in V\mid u_i\in X_3^{F}\cup W_4\}$.
We find an {\FII} of $G$ by performing the following algorithm.

\begin{itemize}
\item[Step 1.]
Add $v_1$ to $I_{\alpha}$, add $u_1, u_2$ to $F$, and add all undetermined vertices in $S-(Q\cup X_2^{F}\cup X_3^{\alpha\beta})$ to $F$.
\item[Step 2.] If vertices in $\{u_j\mid j\le i\}\cup\{v_j\mid j\le i\}$ are determined, but either $u_{i+1}\in X_2^{F}\cup X_3^{\alpha\beta}$ or $v_{i+1}\in Q$ is not determined, then do the following:
For $v_{i+1}\in Q$, add $v_{i+1}$ to exactly one of $I_{\alpha}$ or $I_\beta$ that does not contain $u_i$.
For $u_{i+1} \in X_2^{F}\cup X_3^{\alpha\beta}$,  as long as $u_i\in F$ and there is  $\gamma\in \{\alpha,\beta\}$ such that $\{u_{i-1}, u_{i}, v_i\}\cap I_\gamma=\emptyset$, add $u_{i+1}$ to  $I_{\gamma}$.
Otherwise, add $u_{i+1}$ to $F$.
\end{itemize}
Note that the resulting partition \fii{F}{I_\alpha}{I_\beta} obtained by the algorithm is not an {\FII} of $G$; the problem arises because of $u_k$.
If neither \fii{F}{I_\alpha}{I_\beta} nor \fii{F}{(I_\alpha-v_1)}{(I_\beta+v_1)} is an {\FII} of $G$, then $u_k\in F$ and $|\{v_k,u_{k-1}\}\cap F|\ge 1$.
If \fii{F}{I_\alpha}{I_\beta} is not an {\FII} of $G$, then $|\{v_2,u_3\}\cap F|\ge 1$.
Then since neither \fii{(F-u_1)}{I_\alpha}{(I_\beta+u_1)} nor \fii{(F-u_1)}{(I_\alpha-v_1+u_1)}{(I_\beta+v_1)} is an {\FII} of $G$,
we obtain $|\{v_2,u_3\}\cap F|=|\{v_k,u_{k-1}\}\cap F|=1$.
If $v_k\not \in F$, then
either \fii{(F-u_1)}{(I_\alpha+v_k)}{(I_\beta-v_k+u_1)}
or \fii{(F-u_1)}{(I_\alpha-v_1v_k)}{(I_\beta+v_ku_1)}
is an {\FII} of $G$.
Thus, $v_k\in F$, $u_{k-1}\not\in F$, and $u_k\in X_2^{F}\cup X_3^{\alpha\beta}$.
This also implies that $u_{k-1}\in X_2^{F}\cup X_3^{\alpha\beta}$ and $v_{k-1}\in F$.
If neither \fii{(F-u_k)}{(I_\alpha-v_1+u_k)}{(I_\beta+v_1)} nor \fii{(F-u_k)}{I_\alpha}{(I_\beta+u_k)} is an {\FII} of $G$, then $u_{k-2}\not\in F$.
Now, either \fii{(F+u_{k-1}-u_k)}{(I_{\alpha}-v_1u_{k-1}+u_k)}{(I_\beta+v_1)} or
\fii{(F+u_{k-1}-u_k)}{I_{\alpha}}{(I_\beta+u_k)} is an {\FII} of $G$.

\medskip

Now we consider the case where some vertices $v_i$'s, $t_i$'s, and $t'_i$'s are not distinct.
We mimic the previous case when they are all distinct, but we use a different cycle to proceed with the argument.
By Lemma~\ref{lem:rc-cycle-3-d4-same2}, for $u_i\in X_3$, we know $t_i\not\in\{v_j,t_j\}$ for some $j\neq i$.
By Lemma~\ref{lem:rc-cycle-3-d4-same1} (i), $v_i\neq v_{i+1}$.
Hence, there are two different indices $i$ and $j$ where $u_i,u_j\in X_2$, $v_i=v_j$, and the distance between $u_i$ and $u_j$ along $C$ is at least 2.
Take such $i$ and $j$ so that the distance between $u_i$ and $u_j$ along $C$ is minimum, and consider the cycle $u_iu_{i+1}\ldots u_jv_j$; we abuse notation and relabel this cycle as $D:u_1u_2\ldots u_\ell$ $(4\leq \ell\leq k)$ where $u_1$ is a $2$-vertex.
Namely, all vertices in $V(D)\setminus\{u_1\}$ are in $V_3\cup W_4$, a vertex in $(V(D)\cap V_3)\setminus\{u_2,u_\ell\}$ has a $W_{23}$-neighbor, and the neighbors of $u_2$ and $u_{\ell}$ not on $D$ are in $V_3\cup W_4$.
Let $z_2$ and $z_{\ell}$ be the neighbor of $u_2$ and $u_{\ell}$, respectively, not on $D$.
See the right figure of Figure~\ref{fig:sc2:lem3} for an illustration.
Redefine the following sets: $V=\{v_i \mid u_i\in V(D)\}$, $T=\{t_i,t'_i \mid u_i\in V(D)\}$, $Z=\{z_i,z'_i \mid u_i\in V(D)\}$.
Also, restrict $X_j$ to be $X_j\cap V(D)$.
Note that by the choice of $D$, all of the $v_i$'s, $t_i$'s, and $t'_i$'s are distinct.

Let $S=V(D)\cup V\cup T$. By the minimality of $G$, the graph $H=G-S$ has an {\FII} \fii{F}{I_{\alpha}}{I_\beta}.
For $j\in\{2,3\}$, let  $X_{j}^F=\{ u_i\in X_j\mid z'_i, z_i \in F\},$ and let
$X_{j}^{\alpha\beta}=X_j \setminus X_{j}^{F}$.
For simplicity, let  $Q=\{v_i\in V\mid u_i\in X_3^{F}\cup W_4\}$.
We attempt to find an {\FII}  of $G$ by performing the following algorithm.
\begin{itemize}
\item[Step 1.] Add $u_1,u_2,u_\ell$ to $F$.
If $z_2\not\in F$, then $u_3\in F$.
(If $z_2\in F$, then leave $u_3$ undetermined.)
Add all undetermined vertices in $S-(Q\cup X_2^{F}\cup X_3^{\alpha\beta})$ to $F$.
\item[Step 2.] Same as Step 2 of the previous case.
\end{itemize}
Let \fii{F}{I_\alpha}{I_\beta} be a resulting partition by the algorithm.
Note that we had a choice to choose either $I_\alpha$ or $I_\beta$ when we determined $u_i$ or $v_i$ for the very first instance of Step 2.
Hence, the algorithm can also produce the partition \fii{F}{I'_\alpha}{I'_\beta} where  $I'_\alpha=(I_\alpha-S) \cup (I_\beta\cap S)$ and $I'_\beta=(I_\beta-S) \cup (I_\alpha \cap S)$.
If $z_{\ell}\in F$ and $z_{\ell}$ has both an $I_\alpha$-neighbor and an $I_\beta$-neighbor, then, since $z_\ell \in V_3\cup W_4$,  either \fii{F}{I_\alpha}{I_\beta}, \fii{(F-u_1)}{(I_\alpha+u_1)}{I_\beta},
or \fii{(F-u_1)}{I_\alpha}{(I_\beta+u_1)} is an {\FII} of $G$.
Suppose that $z_{\ell}\in F$ and $z_{\ell}$ has no $I_\beta$-neighbor.
Since neither \fii{(F-u_\ell)}{I_\alpha}{(I_\beta+u_\ell)} nor \fii{(F-u_\ell)}{I'_\alpha}{(I'_\beta+u_\ell)} is an {\FII} of $G$,
we have $\{u_{\ell-1}, v_{\ell-1}, u_{\ell-2}\}\cap I_\alpha\neq \emptyset$
and
$\{u_{\ell-1}, v_{\ell-1}, u_{\ell-2}\}\cap I_\beta\neq \emptyset$.
If $u_{\ell-1}\not\in F$, then $v_{\ell-1}\in F$, and so $u_{\ell-2}\not\in F$.
This implies that
either \fii{(F+u_{\ell-1}-u_\ell)}{(I_\alpha-u_{\ell-1})}{(I_\beta+u_\ell)}
or \fii{(F+u_{\ell-1}-u_\ell)}{(I'_\alpha-u_{\ell-1})}{(I'_\beta+u_\ell)} is an {\FII} of $G$.
If $u_{\ell-1}\in F$, then
since neither \fii{F}{I_\alpha}{I_\beta} nor \fii{F}{I'_\alpha}{I_\beta'} is an {\FII} of $G$,
we know $u_2,z_2\in F$.
Now,
either \fii{(F-u_1)}{(I_\alpha+u_1)}{I_\beta}
or \fii{(F-u_1)}{(I'_\alpha+u_1)}{I'_\beta} is an {\FII} of $G$.

Suppose that $z_\ell\not\in F$, and without loss of generality assume  $z_\ell\in I_\alpha$.
Moreover, we may assume that all neighbors of $z_{\ell}$ are in $F$.
Otherwise, it is the case where $z_\ell\in W_4$, which is already covered by the case where $z_{\ell}\in F$ has neighbors in $I_\beta$ and $I_\alpha$.
Since neither \fii{F}{I_\alpha}{I_\beta} nor \fii{F}{I'_\alpha}{I_\beta'} is an {\FII} of $G$,
$u_{\ell-1}\in F$ and $|\{v_{\ell-1}, u_{\ell-2}\}\cap F|\ge 1$.
Now, either \fii{(F-u_\ell)}{I_\alpha}{(I_\beta+u_{\ell})} or
\fii{(F-u_\ell)}{I'_\alpha}{(I'_\beta+u_{\ell})} is an {\FII} of $G$.
\end{proof}

\section{Remarks}\label{sec:remark}

There is a natural generalization of {\FII}s.
For a nonnegative integer $k$,
we say a graph $G$ has an $\mathcal{F}\mathcal{I}_k$-partition $F \sqcup I_{1} \sqcup  \cdots \sqcup I_{k}$
if $F, I_{1}, \ldots, I_{k}$ is a partition of  $V(G)$ such that $G[F]$ is a forest and each $I_{i}$ is a 2-independent set.
As explained in the introduction, a graph with an $\mathcal{F}\mathcal{I}_k$-partition can be star $(k+3)$-colored.
Let $h$ and $f$ be functions such that
\[h(k)=\inf\{\mad(G): G\text{ has no }\mathcal{F}\mathcal{I}_k\text{-partition}\}
\qquad
f(k)=\inf\{\mad(G): \chi_s(G)>k \}.\]
Since a forest is star 3-colorable, for an integer $k$, $ h(k+3)\le f(k)$.

Determining the exact values of $f(k)$ and $h(k)$ is a difficult, yet interesting problem.
From~\cite{2016BrFeKuLoStYa,2009BuCrMoRaWa}, we know  $f(1)=1, f(2)={3\over 2}, f(3)=2$, and  ${5\over 2}\leq f(4)\leq {18\over 7}$.
Our main result implies $f(5)\geq \frac{8}{3}$.
As stated in~\cite{2009BuCrMoRaWa}, determining the exact value of $f(k)$ for $k\ge 4$ remains an intriguing question.

\begin{question}[\cite{2009BuCrMoRaWa}]\rm
What is the exact value of $f(k)$ for $k\ge 4$?
\end{question}

The motivation of $\mathcal{F}\mathcal{I}_k$-partitions comes from star colorings, but it is interesting in its own right.
It is easy to see that a graph $G$ has an
$\mathcal{F}\mathcal{I}_0$-partition if and only if $G$ is a forest.
Since a forest has maximum average degree less than 2, it follows that $h(0)=2$.
Since a graph $H$ with $\mad(H)={5\over 2}$ where $H$ has no $\mathcal{F}\mathcal{I}_1$-partition was constructed in~\cite{2009BuCrMoRaWa}, we know $h(1)\leq {5\over 2}$.
Yet, Brandt et al.~\cite{2016BrFeKuLoStYa} proved that a graph $G$ with $\mad(G)<{5\over 2}$ has an $\mathcal{F}\mathcal{I}_1$-partition, so the value of $h(1)$ is determined, namely, $h(1)=\frac{5}{2}$.
In this term, our main result is equivalent to $h(2)\geq\frac{8}{3}$.
We explicitly ask the question of determining the value of $h(k)$ for $k\geq 2$.

\begin{question}\rm
What is the exact value of $h(k)$ for $k\ge 2$?
\end{question}

It is tempting to guess $h(k)={4+k\over 2}$, yet we provide a construction that shows $h(2)\le \frac{46}{17}<3$.

\begin{construction}\rm
For  a positive integer $n$, let $G_{5n}$ be the graph obtained from  a $5n$-cycle $v_0,\ldots,v_{5n-1}$ by
attaching two pendent triangles to $v_i$ where $i\pmod 5\in\{1,2,3\}$.
It is not hard to see that $\mad(G_{5n})=\frac{46}{17}$.
Now suppose to the contrary that $G_{5n}$ has an {\FII} \fii{F}{I_\alpha}{I_\beta}.
By Lemma~\ref{lema:2B2}, we know that if $i\not\equiv 2\pmod5$ then the vertex $v_i$ of $G_{5n}$ is in $F$. This also forces $v_{5j+2}$ to be in $F$, which is a contradiction since $F$ is a forest.
Hence,  $G_{5n}$ has no {\FII}.
\end{construction}

\begin{figure}[ht]
	\centering
  \includegraphics[scale=0.65,page=8]{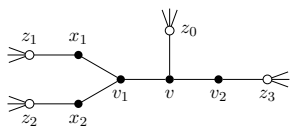} 
\caption{The graphs $G_{5}, G_{10}$, and $G_{15}$.}
  \label{fig:tight}

\end{figure}

As the above infinite family of graphs exhibit $h(2)\le \frac{46}{17}$, we seek the exact value of $h(2)$.
\begin{question}\rm
What is the value $h(2)$? In particular, is $h(2)=\frac{46}{17}$?
\end{question}

As layed out in Table~\ref{table}, a planar graph with girth at least 10 is star 4-colorable~\cite{2016BrFeKuLoStYa}, which is sharp in the sense that the number of colors cannot be reduced~\cite{2004AlChKiKuRa}.
The main result in this paper implies that a planar graph with girth at least 8 is star 5-colorable.
It is also known that there exists a planar graph with girth 7 that is not star 4-colorable~\cite{2008Ti}.
Regarding star 5-colorings, the only remaining case in terms of girth is to determine whether planar graphs with girth 7 are star 5-colorable or not.

\begin{question}
Does there exist a planar graph with girth 8 that is not star 4-colorable or is every planar graph with girth at least 7 star 5-colorable?
\end{question}

\section*{Acknowledgements}
The authors sincerely thank the referees for their valuable comments.
We also thank Xuding Zhu for catching a flaw in the previous manuscript.

 \end{document}